\newcommand{\numberset}{\mathbb}
\newcommand{\N}{\numberset{N}}
\newcommand{\R}{\numberset{R}}
\newcommand{\ud}{\mathrm{d}}
\theoremstyle{plain} 
\newtheorem{thm}{Theorem} 
\newtheorem{cor}[thm]{Corollary} 
\newtheorem{lem}[thm]{Lemma} 
\newtheorem{prop}[thm]{Proposition} 
\newtheorem*{theorem*}{Teorema}
\theoremstyle{definition} 
\newtheorem{defn}[thm]{Definition}
\newtheorem{rem}[thm]{Remark}
\numberwithin{equation}{section}
\numberwithin{thm}{section}
\title{\textbf{On the existence of minimal expansive solutions to the $N$-body problem}}
\author{Davide Polimeni and Susanna Terracini}
\date{}
\begin{document}

\maketitle

\begin{abstract}

We deal, for the classical $N$-body problem, with the existence of action minimizing half entire expansive solutions with prescribed asymptotic direction and initial configuration of the bodies. We tackle the cases of hyperbolic, hyperbolic-parabolic and parabolic arcs in a unitary manner.  Our approach is based on the minimization of a renormalized Lagrangian action, on a suitable functional space. With this new strategy, we are able to confirm the already-known results of the existence of both hyperbolic and parabolic solutions, and we prove for the first time the existence of hyperbolic-parabolic solutions for any prescribed asymptotic expansion in a suitable class. Associated with each element of this class we find a viscosity solution of the Hamilton-Jacobi equation as a linear correction of the value function. Besides, we also manage to give a better description of the growth of parabolic and hyperbolic-parabolic solutions. 

\end{abstract}

\section{Introduction and main results}\label{sec_intro}

In this paper, we deal with half entire solutions to the $N$-body problem of Celestial Mechanics in the Euclidean space $\R^d$ of hyperbolic, parabolic or mixed hyperbolic-parabolic type. 
We first investigate the existence of trajectories to the gravitational $N$-body problem having prescribed growth at infinity. This classical line  of research has recently been re-energized by the injection of new methods of analysis, of perturbative, variational, geometric and/or analytic functional nature. Indeed, in addition to the classical literature on the subject \cite{Alekseev_FinalMotions3Body, Chazy, MarchalSaari_FinalEvolution, Saari_ExpandingGravitational,Saari_ManifoldStructure}, we quote the recent results about existence of hyperbolic solutions \cite{MR4121133, MR3868425, MR4600219,MadernaVenturelli_GloballyMinimizingParabolic}, parabolic ones \cite{BTV2, BTV1,BDFT, LuzMaderna_FreeTimeMinimizers,MadernaVenturelli_HyperbolicMotions,SaariHulkower_TotalCollapse} and hyperbolic-parabolic ones \cite{Burgos_PartiallyHyperbolic}, without neglecting those ending in an  oscillatory manner \cite{MR3455155,MR4265664,paradela2022oscillatory} and references therein.

To start with, let us consider $N$ point masses $m_1,...,m_N > 0$ moving  under the action of the mutual attraction, with the inverse-square law of universal gravitation. We denote the components of the configuration vector $x = (r_1,...,r_N) \in \R^{dN}$ of the positions of the bodies and  by $|r_i - r_j|$ the Euclidean distance between two bodies $i$ and $j$. Newton’s equation of motion for the $i$-th body of the $N$-body problem reads as
\begin{displaymath}
    m_i\ddot{r_i} = -\sum_{j=1,...,N,\ j\neq i}^{N} m_im_j \frac{r_i - r_j}{|r_i - r_j|^3}.
\end{displaymath}
Since these equations are invariant by translation, we can fix the origin of our inertial frame at the center of mass of the system. We can thus define the configuration space of the system as
\begin{displaymath}
    \mathcal{X} = \biggl\{x=(r_1,...,r_N)\in\R^{dN},\ \sum_{i=1}^{N} m_i r_i = 0\biggl\}
\end{displaymath}
and denote by $\Omega = \{x\in \mathcal{X} \ | \   r_i\neq r_j\ \forall \ i \neq j\}  \subset \mathcal{X}$ the set of configurations without collisions, which is open and dense in $\mathcal{X}$, and with $\Delta$ its complement, that is the collision set.
Now we can write the equations of motion as
\begin{equation}\label{eq_newton}
    \mathcal{M}\ddot{x} = \nabla U(x),
\end{equation}
where $\mathcal{M}=\text{diag}(m_1 I_d,...,m_N I_d)$ is the matrix of the masses and the function $U:\Omega \rightarrow \mathbb{R} \cup \{+\infty\}$ is the Newtonian potential
\begin{equation}\label{eq:potential}
    U(x) = \sum_{i<j} \frac{m_i m_j}{|r_i-r_j|}.
\end{equation}
Newton's equations define an analytic local flow on $\Omega \times \R^{dN}$ with a first integral given by the mechanical energy:
\begin{displaymath}
    h = \frac{1}{2}\|\dot{x}\|_\mathcal{M}^2 - U(x).
\end{displaymath}
We will use $\|\cdot\|_\mathcal{M}$ to denote the norm induced by the mass scalar product
\begin{displaymath}
     \langle x,y\rangle_\mathcal{M} = \sum_{i=1}^{N} m_i \langle r_i,s_i\rangle,\qquad \text{for any }x=(r_1,...,r_N),\ y=(s_1,...,s_N)\in\mathcal{X},
\end{displaymath}
where, with a little abuse, $\langle\cdot,\cdot\rangle$ denoes the standard scalar product in $\R^d$ and also in $\mathcal{X}$.   

In this paper we will be concerned with the class of expansive motions, which is defined in the following way.
\begin{defn}
    A motion $x:[0,+\infty) \rightarrow \Omega$ is said to be expansive when all the mutual distances diverge, that is, when $|r_i(t)-r_j(t)| \rightarrow +\infty$ as $t\rightarrow+\infty$ for all $i < j$. Equivalently, the motion is expansive if $U(x(t)) \rightarrow 0$ as $t\rightarrow+\infty$.
\end{defn}
From the conservation of the energy, we observe that, since $U(x(t)) \rightarrow 0$ implies $\|\dot{x}(t)\|_\mathcal{M}^2\rightarrow 2h$ as $t\rightarrow+\infty$, expansive motions can only occur at nonnegative energies.

For a given motion, we introduce the minimum and the maximum separation between the bodies at time $t$ as the two functions
\begin{displaymath}
    r(t) = \min_{i<j} |r_i(t) - r_j(t)| \quad \text{and} \quad R(t) = \max_{i<j} |r_i(t) - r_j(t)|,
\end{displaymath}
where we write $|\cdot|$ to denote the standard Euclidean norm in $\R^d$.
The next fundamental theorems give us a more accurate description of the system's expansion.
\begin{thm}[Pollard, 1967 \cite{Pollard_BehaviorOfGravitationalSystems}]
    Let $x$ be a motion defined for all $t>t_0$. If $r$ is bounded away from zero, then we have that $R=O(t)$ as $t\rightarrow+\infty$. In addition, $R(t)/t\rightarrow+\infty$ if and only if $r(t)\rightarrow0$.
\end{thm}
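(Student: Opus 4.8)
The engine of the whole argument is the \emph{Lagrange--Jacobi identity} for the moment of inertia $I(t) = \|x(t)\|_\mathcal{M}^2 = \sum_{i}m_i|r_i(t)|^2$. Since $U$ is positively homogeneous of degree $-1$, Euler's identity gives $\langle x,\nabla U(x)\rangle = -U(x)$, so differentiating $\tfrac12\dot I = \langle x,\dot x\rangle_\mathcal{M}$ and using Newton's equations \eqref{eq_newton} together with the energy integral yields
\begin{equation*}
\ddot I(t) \;=\; 2\|\dot x(t)\|_\mathcal{M}^2 - 2U(x(t)) \;=\; 4h + 2U(x(t)).
\end{equation*}
Two elementary comparisons will be used throughout. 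First, since the center of mass is at the origin, for each $i$ the relation $\langle r_i,\sum_k m_k r_k\rangle = 0$ forces some $j\neq i$ with $\langle r_i,r_j\rangle\le 0$, whence $|r_i|\le|r_i-r_j|\le R(t)$; combined with $R\le 2\max_i|r_i|$ and $\min_k m_k\,(\max_i|r_i|)^2\le I\le(\sum_k m_k)(\max_i|r_i|)^2$ this gives $R(t)\asymp\max_i|r_i(t)|\asymp\sqrt{I(t)}$. Second, estimating each term of $U$ by $m_im_j/r$ from above and by a single term from below gives $U(x(t))\asymp r(t)^{-1}$.

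With these in hand the first assertion is immediate: if $r(t)\ge\rho>0$ for $t>t_0$, then $U(x(t))$ is bounded, hence $\ddot I(t)\le 4h+C$ is bounded above, and integrating twice gives $I(t)=O(t^2)$, i.e.\ $R(t)=O(t)$. The sufficiency in the equivalence is equally short: if $r(t)\to 0$ then $U(x(t))\to+\infty$, so $\ddot I(t)\to+\infty$; integrating once gives $\dot I(t)/t\to+\infty$ and once more $I(t)/t^2\to+\infty$, hence $R(t)/t\asymp\sqrt{I(t)}/t\to+\infty$.

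The remaining implication, that $R(t)/t\to+\infty$ forces $r(t)\to 0$, is the delicate one and I would argue by contraposition. Assume $r(t)\not\to 0$. If $\liminf_t r(t)>0$, the first assertion already gives $R(t)=O(t)$, contradicting $R(t)/t\to+\infty$; so suppose $\liminf r = 0 < \limsup r$, i.e.\ the minimal separation oscillates, and let us nonetheless aim at $R(t)=O(t)$. Combining the a.e.\ bound $\dot R(t)\le 2(\min_k m_k)^{-1/2}\|\dot x(t)\|_\mathcal{M}=2(\min_k m_k)^{-1/2}\sqrt{2h+2U(x(t))}$ with $U\asymp r^{-1}$ reduces the task to the estimate $\int_{t_0}^{t}r(s)^{-1/2}\,ds=O(t)$. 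Away from the \emph{close-encounter episodes} — the maximal intervals on which $r<\rho$ for a small fixed $\rho$ — the integrand is bounded by $\rho^{-1/2}$, so everything reduces to bounding the contribution of these episodes.

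On a close-encounter episode the approaching pair $(i,j)$ satisfies, to leading order, a Kepler-type energy balance $\tfrac12\mu|\dot q|^2 - m_im_j|q|^{-1}\approx\mathrm{const}$ for $q=r_i-r_j$ — the tidal forces of the other bodies being bounded, while a \emph{sustained} smallness of $r$ is excluded by the finite-time character of collisions — so that $|q(s)|$ behaves near its minimum $\delta$ like $\delta+a(s-s_*)^2$ with $a\asymp\delta^{-2}$. A dyadic decomposition over the shells $|q|\sim 2^k\delta$ then shows that a deep episode ($\delta\ll\rho$) contributes $O(\delta\log(1/\delta))=O(1)$ to $\int r^{-1/2}$, a shallow one contributes $O(\rho^{-1/2}\cdot(\text{its length}))$, and two episodes separated by an excursion of $r$ above $2\rho$ are separated in time by an amount bounded below in terms of $\rho$ only; summing over $[t_0,t]$ yields $\int_{t_0}^{t}r(s)^{-1/2}\,ds=O(t)$, hence $R(t)=O(t)$, the sought contradiction. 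The main obstacle is precisely this bookkeeping: a quantitative version of the Kepler-type balance with control of the tidal remainder, the exclusion of episodes that accumulate too fast, and — when several bodies approach simultaneously — replacing the pair by the colliding cluster and invoking a Sundman-type scaling bound for its internal size. Everything else in the argument is soft; alternatively, one may bypass this analysis by quoting finer known results on the asymptotics of $I(t)$ and on the velocity dispersion along expansive motions.
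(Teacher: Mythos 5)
The paper does not prove this statement: it is quoted as classical background with a citation to Pollard's 1967 article, so there is no internal proof to compare you against. On its own merits, your argument correctly settles two of the three assertions: the Lagrange--Jacobi identity $\ddot I=4h+2U$, the comparisons $R\asymp\sqrt I$ and $U\asymp r^{-1}$, the deduction that $r\ge\rho>0$ bounds $\ddot I$ and hence gives $I=O(t^2)$, and the chain $r\to 0\Rightarrow \ddot I\to+\infty\Rightarrow I/t^2\to+\infty\Rightarrow R/t\to+\infty$ are all correct. The genuine gap is the remaining implication $R/t\to+\infty\Rightarrow r\to 0$: what you defer as ``bookkeeping'' is the entire content of that direction, and the sketch does not survive scrutiny. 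Your reduction to $\int_{t_0}^t r(s)^{-1/2}\,\ud s=O(t)$ asks for more than is needed (you only have to contradict $R(t)/t\to+\infty$, i.e.\ bound $R(t_n)/t_n$ along \emph{one} sequence), and the episode analysis rests on claims that fail for $N\ge 3$: ``sustained smallness of $r$'' is not excluded by the finite-time character of collisions (a tight binary keeps $r$ below $\rho$ forever with no collision, so a single episode can have infinite length); the minimizing pair can change within an episode and triple or higher-order near-collisions are not governed by a two-body Kepler balance; and no bound on the number or depth of episodes per unit time is actually established.

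The direction you left open admits an elementary proof that avoids all near-collision analysis. Suppose $r(t_n)\ge\rho>0$ along $t_n\to+\infty$. Then $\|\dot x(t_n)\|_\mathcal{M}^2=2h+2U(x(t_n))\le V^2$ with $V$ depending only on $h$, $\rho$ and the masses. Set $J=\sqrt I=\|x\|_\mathcal{M}$. Cauchy--Schwarz gives $|\dot J|\le\|\dot x\|_\mathcal{M}$ and $\dot I(t_n)=2\langle x,\dot x\rangle_\mathcal{M}(t_n)\le 2VJ(t_n)$, while combining the Lagrange--Jacobi identity with the energy integral yields $\ddot I=\|\dot x\|_\mathcal{M}^2+2h$, hence $\int_{t_0}^{t_n}\|\dot x\|_\mathcal{M}^2\,\ud t=\dot I(t_n)-\dot I(t_0)-2h(t_n-t_0)$. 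Therefore
\begin{displaymath}
J(t_n)\le J(t_0)+\int_{t_0}^{t_n}\|\dot x\|_\mathcal{M}\,\ud t\le J(t_0)+\sqrt{t_n-t_0}\,\bigl(2VJ(t_n)+|\dot I(t_0)|+2|h|(t_n-t_0)\bigr)^{1/2},
\end{displaymath}
a quadratic inequality in $J(t_n)$ of the form $A^2\le \alpha_n A+\beta_n$ with $\alpha_n=O(t_n)$ and $\beta_n=O(t_n^2)$, which forces $J(t_n)=O(t_n)$ and hence $R(t_n)=O(t_n)$, contradicting $R(t)/t\to+\infty$. This (or something very close to it) is Pollard's own route; I would replace the close-encounter analysis with it and keep the rest of your argument as is.
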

\begin{thm}[Marchal-Saari, 1976 \cite{MarchalSaari_FinalEvolution}]\label{art2_thm_MS}
Let $x$ be a motion defined for all $t > t_0$. Then either $R(t)/t \rightarrow +\infty$ and $r(t) \rightarrow 0$, or there is a configuration $a \in \mathcal{X}$ such that $x(t) = at + O(t^{2/3})$. In particular, for superhyperbolic
motions (i.e. motions such that $\limsup_{t \rightarrow +\infty} R(t)/t = +\infty$) the quotient $R(t)/t$ diverges. 
\end{thm}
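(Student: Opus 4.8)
The plan is to build everything on the Lagrange--Jacobi identity for the moment of inertia. Set $I(t) = \|x(t)\|_{\mathcal{M}}^2$, relative to the fixed centre of mass. Differentiating twice and inserting Newton's equations, the energy integral $\tfrac12\|\dot x\|_{\mathcal{M}}^2 = h + U(x)$ and Euler's relation $\langle \nabla U(x), x\rangle = -U(x)$ (valid since $U$ is homogeneous of degree $-1$), one obtains
\[
  \ddot I(t) = 4h + 2U(x(t)) \ \ge\ 4h.
\]
Since $I(t)$ is comparable, up to mass-dependent constants, to $R(t)^2$, and $|x(t)|^2 \le C\,I(t)$, it suffices to control the growth of $I$. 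The first step would be to show that $\ell := \lim_{t\to+\infty} I(t)/t^2$ exists in $[0,+\infty]$: from $\ddot I \ge 4h$ one gets $\liminf I(t)/t^2 \ge 2h$ at once, but excluding oscillation is the real issue, and I would attack it by coupling the Lagrange--Jacobi identity with Sundman-type inequalities relating $I$, $\dot I$, the conserved angular momentum and $U$, and by proving that the decomposition of the bodies into \emph{clusters} --- maximal groups with mutual distances $o(t)$ --- eventually stabilises, which in turn pins down the limit of $I(t)/t^2$.

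Granting that $\ell$ exists, the dichotomy in the statement is simply $\ell = +\infty$ against $\ell < +\infty$. If $\ell = +\infty$ then $R(t)/t \to +\infty$, and by Pollard's theorem (stated above) this forces $r(t) \to 0$, giving the first alternative; conversely $R(t) = O(t)$ implies $\ell < +\infty$. The closing assertion then follows for free: a superhyperbolic motion ($\limsup R(t)/t = +\infty$) cannot satisfy $x(t) = at + O(t^{2/3})$, which would give $R(t) = O(t)$, so it must lie in the first alternative and $R(t)/t$ has the genuine limit $+\infty$. In effect the whole content at this stage is the upgrade from $\limsup R(t)/t = +\infty$ to $\lim R(t)/t = +\infty$, equivalently from $\liminf r(t) = 0$ to $\lim r(t) = 0$: precisely the non-oscillation of the cluster partition.

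For the second alternative I would assume $\ell < +\infty$ and argue by induction on $N$, the cases $N = 1, 2$ being classical. One first shows that $x(t)/t$ converges to some $a = (a_1,\dots,a_N) \in \mathcal{X}$ with $\sum_i m_i a_i = 0$ --- a step that itself leans on the cluster analysis above together with the energy relation and the identity $\ddot I = 4h + 2U$. Then I would partition the indices by $i \sim j \iff |r_i(t) - r_j(t)| = o(t) \iff a_i = a_j$. Across distinct clusters, mutual distances grow linearly in $t$, so the inter-cluster potential is $O(1/t)$ and the force exerted on each body by bodies outside its cluster is $O(1/t^2)$; integrating twice, the centre of mass of each cluster $C$ moves like $a_C t + O(\log t)$. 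Inside a cluster $C$ with $|C| \ge 2$, the configuration taken relative to the centre of mass of $C$ solves an $N_C$-body problem with $N_C < N$, perturbed by a force of order $O(1/t^2)$ --- a perturbation that adds only an integrable term to the Lagrange--Jacobi identity of the subsystem, so the inductive hypothesis applies; since all internal distances are $o(t)$ the subsystem cannot be superhyperbolic, hence internally it is $a^C t + O(t^{2/3})$ with necessarily $a^C = 0$, i.e. $O(t^{2/3})$. The exponent $2/3$ is exactly the parabolic rate, the slowest growth admissible for an expansive, non-linearly-splitting subsystem. Summing the contribution $a_i t + O(\log t)$ of the cluster centre of mass and the internal contribution $O(t^{2/3})$ yields $r_i(t) = a_i t + O(t^{2/3})$.

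I expect the genuine obstacle to be the non-oscillation claim of the first step --- the eventual stabilisation of the cluster partition and the existence of $\lim I(t)/t^2$. Once that is secured, identifying the linear term $a$ and propagating the $O(t^{2/3})$ estimate through the induction is essentially routine, the only delicate point being to check that the perturbed subsystems arising in the inductive step still meet the hypotheses of the statement, which follows from the $t^{-2}$ decay of the perturbing force.
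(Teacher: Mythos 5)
The paper does not prove this statement at all: it is quoted as a classical result of Marchal and Saari from 1976, so there is no internal proof to compare yours against, and your proposal has to stand on its own. Judged on those terms, it is an honest strategy outline rather than a proof, and the gap you yourself flag is exactly where the entire content of the theorem lives. The dichotomy you want to read off from $\ell=\lim_{t\to+\infty} I(t)/t^2$ presupposes that this limit exists, i.e.\ that the motion cannot oscillate between regimes where $R(t)/t$ is bounded and regimes where it is large; equivalently, that $\limsup_{t\to+\infty} R(t)/t=+\infty$ upgrades to $\lim_{t\to+\infty} R(t)/t=+\infty$. You propose to obtain this by coupling the Lagrange--Jacobi identity with Sundman-type inequalities and by showing that the cluster partition eventually stabilises, but you carry out neither step, and the stabilisation of the partition is itself logically downstream of the non-oscillation you are trying to prove: the partition by $|r_i-r_j|=o(t)$ is only well defined once each mutual distance is known to have a definite behaviour at linear scale. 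As written, the superhyperbolic alternative --- which you correctly identify as ``the whole content at this stage'' --- is assumed rather than established.

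The remaining steps have broadly the right shape, and this is indeed how arguments of Marchal--Saari type proceed: once $x(t)/t\to a$ is secured, inter-cluster forces are $O(t^{-2})$, cluster centres of mass move as $a_Ct+O(\log t)$, and internal motions are handled by induction on the number of bodies. Two points there still need real work, though. First, the induction hypothesis must be formulated for a \emph{perturbed} $N_C$-body problem with an external force $O(t^{-2})$, which is a strictly stronger statement than the one being proved; moreover the perturbations to the subsystem's energy and to its Lagrange--Jacobi identity are controlled only in terms of the internal growth rate you are in the course of establishing, so a bootstrap is required and must be set up explicitly rather than asserted. Second, the claim that the slowest admissible internal growth of a non-linearly-splitting subsystem is exactly $t^{2/3}$ is the quantitative heart of the $O(t^{2/3})$ bound and again rests on a Sundman-type estimate that is announced but never written down. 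In short: correct skeleton, but the two load-bearing estimates --- non-oscillation of $R(t)/t$ and the $t^{2/3}$ rate --- are both left as declarations of intent.
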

\begin{thm}[Marchal-Saari, 1976 \cite{MarchalSaari_FinalEvolution}]
Suppose that $x(t) = at + O(t^{2/3})$ for some $a \in \mathcal{X}$ and that the motion is expansive. Then, for each pair $i < j$ such that $a_i = a_j$, we have $|r_i(t)-r_j(t)| \approx\footnote{
Given positive functions $f$ and $g$, we write $f \approx g$ when there exist two positive constants $\alpha$ and $\beta$ such that $\alpha \leq \frac{f}{g} \leq \beta$.
} t^{2/3}$.
\end{thm}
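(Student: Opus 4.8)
The plan is to prove the two inequalities hidden in $|r_i(t)-r_j(t)|\approx t^{2/3}$ separately: the upper one is immediate, the lower one is the whole content. For the upper bound, if $a_i=a_j=:a$ then by hypothesis $r_i(t)-at=O(t^{2/3})$ and $r_j(t)-at=O(t^{2/3})$, hence $|r_i(t)-r_j(t)|\le|r_i(t)-at|+|r_j(t)-at|=O(t^{2/3})$. For the lower bound I would first pass to a totally parabolic sub-problem. Group the bodies into clusters according to the common value of $a_k$, so that the given pair lies in one cluster $C$; in the uniformly translating frame $y_k:=r_k-at$, $k\in C$, and after recentering at the center of mass $\bar y_C$ of the cluster (which, being the average of the $y_k$'s, is $O(t^{2/3})$), the internal motion $y$ solves the $|C|$-body Newton equations perturbed by the gravitational pull of the bodies outside $C$. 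Each such body has asymptotic velocity $\neq a$, hence stays at distance $\gtrsim t$ from $C$, so this perturbation is a force of size $O(t^{-2})$. The internal motion is expansive, satisfies $\sum_{k\in C}m_k y_k=0$ and, since all its bodies now have asymptotic velocity $0$, $y(t)=O(t^{2/3})$; translations and recentering do not change mutual distances, so it suffices to prove the statement for such a perturbed, totally parabolic, expansive motion of $n:=|C|$ bodies.

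Write $I=\sum_k m_k|y_k|^2$, $U=U(y)$, $T$ the internal kinetic energy, $h=T-U$. From $y=O(t^{2/3})$ we get $I=O(t^{4/3})$; from expansiveness, $U(t)\to0$ and hence $\|\dot y\|$ is bounded. A direct computation gives $\dot h=O(t^{-2})$ --- the only surviving term is the power of the perturbing force --- so $h$ has a limit $\ell$, with $h=\ell+O(t^{-1})$. The perturbed Lagrange--Jacobi identity $\ddot I=4h+2U+O(t^{-4/3})$ then shows that $\ell>0$ would force $\ddot I\ge 3\ell>0$ eventually, hence $I\gtrsim t^2$, contradicting $I=O(t^{4/3})$; thus $\ell=0$, so $T\to0$, $U\to0$, and the identity becomes $\ddot I=2U+O(t^{-1})$. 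Now combine this with the elementary bound $U\ge c_1 I^{-1/2}$ (since, with the center of mass at the origin, the largest mutual distance is $\lesssim\sqrt I$) and with $I=O(t^{4/3})$: this yields $\ddot I\ge c_2\,t^{-2/3}$ for $t$ large, and integrating twice, $I\gtrsim t^{4/3}$. Hence $I\approx t^{4/3}$, and in particular the diameter $\max_{i,j}|y_i(t)-y_j(t)|\approx t^{2/3}$.

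To promote the diameter bound to a bound on every pairwise distance I would invoke the classical asymptotic analysis of parabolic motions (Chazy; McGehee's blow-up to the collision manifold): for a totally parabolic motion the rescaled configuration $y(t)/\sqrt{I(t)}$ has its $\omega$-limit set contained in the set of normalized central configurations, which is compact and contained in $\Omega$, hence collision-free (the extension to our $O(t^{-2})$-perturbed setting being routine, as the perturbation is short-range and integrable). Therefore $\min_{i<j}|y_i(t)-y_j(t)|/\sqrt{I(t)}$ is, for $t$ large, bounded below by a positive constant, whence $\min_{i<j}|y_i(t)-y_j(t)|\gtrsim\sqrt{I(t)}\approx t^{2/3}$. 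Together with the upper bound this gives $|y_i(t)-y_j(t)|\approx t^{2/3}$ for all pairs, in particular for the original pair $i<j$.

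The soft part of the argument --- the reduction to the totally parabolic case, $h\to0$, and the two-sided control $I\approx t^{4/3}$ --- only pins down the diameter of a cluster; the genuine obstacle is the last step, namely excluding that a proper sub-cluster contracts or expands more slowly than $t^{2/3}$. This really requires that a parabolic motion resolves, after rescaling, onto the compact collision-free set of central configurations: the merely averaged information $\int^t U\,ds\approx t^{1/3}$ that comes out of Lagrange--Jacobi does not by itself prevent the rescaled configuration from making recurrent excursions toward $\Delta$, and it is the gradient-like structure of the rescaled flow that rules these out. A secondary, purely technical, point is to carry the $O(t^{-2})$ perturbation faithfully through all of the estimates above.
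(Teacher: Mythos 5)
The paper does not prove this statement: it is quoted from Marchal--Saari with a citation to \cite{MarchalSaari_FinalEvolution} and used as a black box, so there is no internal proof to compare yours against; I can only assess your argument on its own terms. The soft part is correct and well organized. The upper bound is indeed immediate. The reduction of a cluster $C$ to an $O(t^{-2})$-perturbed, completely parabolic $|C|$-body subproblem is legitimate, since bodies outside $C$ have asymptotic velocity different from $a$ and hence stay at distance $\gtrsim t$. The chain $\dot h=O(t^{-2})\Rightarrow h\to\ell$; $\ell>0\Rightarrow \ddot I\geq 3\ell$ eventually $\Rightarrow I\gtrsim t^{2}$, contradicting $I=O(t^{4/3})$; together with the (tacit, but trivial) bound $\ell\geq 0$ coming from $T\geq0$ and $U\to0$, this gives $\ell=0$, whence $\ddot I=2U+O(t^{-1})\geq c\,t^{-2/3}$ and $I\approx t^{4/3}$. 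This is a correct perturbed Lagrange--Jacobi argument and it does pin down the cluster diameter, hence settles clusters of two bodies.

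The gap is exactly where you locate it, and nothing you write closes it. For a cluster with three or more bodies, passing from $\max_{i,j\in C}|y_i-y_j|\approx t^{2/3}$ to $\min_{i,j\in C}|y_i-y_j|\gtrsim t^{2/3}$ is the entire content of the theorem, and you discharge it onto the assertion that $y/\sqrt{I}$ has its $\omega$-limit set inside the compact, collision-free set of normalized central configurations. That assertion for the cluster subsystem is essentially the $a=0$ case of the statement being proved (and more): its proof requires precisely the Tauberian argument showing that $U\sqrt{I}$ converges --- which is what keeps $y/\sqrt{I}$ away from $\Delta$ --- together with the gradient-like structure of the blown-up flow, and neither is supplied; ``Chazy'' covers only $N=3$, and ``the extension to the $O(t^{-2})$-perturbed setting is routine'' is an assertion, not an argument. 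The failure mode you yourself name --- recurrent excursions of $y/\sqrt{I}$ toward $\Delta$, i.e.\ a proper sub-cluster whose mutual distances drop to $o(t^{2/3})$ along a subsequence --- is not excluded by anything you establish. So the proposal is a correct reduction plus a correct diameter estimate, with the decisive step outsourced to a result at least as deep as the theorem itself; to make it a proof one would have to either carry out the collision-manifold analysis for the perturbed cluster system or replace it by a direct sub-cluster energy argument in the spirit of Marchal and Saari's original paper.
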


Next, let us recall the well known Chazy classification of the expansive motions  for the  three-body problem (cfr. \cite{Chazy}), based on the asymptotic order of growth of the distances between the bodies. This prevents an expansive motion to be  superhyperbolic, so we can assume that it is of the form $x(t) = at + O(t^{2/3})$ for some limit $a \in \mathcal{X}$. Assuming that the center of mass of the system is at rest, Chazy classified these motions as follows:
\begin{itemize}
    \item \textit{Hyperbolic}: $a \in \Omega$ and $|r_i(t)-r_j(t)| \approx t$ for all $i < j$;
    \item \textit{Hyperbolic-parabolic}: $a \in \Delta$ but $a \neq 0$;
    \item \textit{Completely parabolic}: $a = 0$ and $|r_i(t)-r_j(t)| \approx t^{2/3}$ for all $i < j$.
\end{itemize}

The following definition is in order.
\begin{defn}
    A motion $x(t)$ is said to have limit shape when there is a time dependent similarity $S(t)$ of the space $\R^d$ such that $S(t)x(t)$ converges to some configuration $a \neq 0$.
\end{defn}
In our case, there is a diagonal action of $S(t)$, which means that $S(t)x=(S(t)r_1,...,S(t)r_N)$ for $x=(r_1,...,r_N)\in \mathcal{X}$. In particular, for the case of (half) hyperbolic motions, we can say that the limit shape of such a motion is its asymptotic velocity $a = \lim_{t\rightarrow +\infty} \frac{x(t)}{t}$. Similarily, (half) parabolic motions also possess a limit shape, which is now bound to be a central configuration, that is, a critical point of the potential $U$ constrained on the intertia ellipsoid $\mathcal E=\{x\in\mathcal X\,:\, \|x\|_\mathcal{M}^2=1\}$.  %This is, in fact, a stronger way of having a limit shape, since in this case the similarities are given by homotheties.
\\

In this paper, we are going to tackle the existence of half entire expansive solutions for the Newtonian $N$-body problem from a unitary perspective by a global variational approach, using a suitable renormalized action functional, as the Lagrangian is not expected to be integrable on the half line. In particular, referring to Chazy's classification, we will show a proof of existence of motions for each one of the previous three classes of motions. As a first step, we shall revisit recent works by E. Maderna and A. Venturelli about the existence of half hyperbolic and parabolic trajectories from this new angle.
\begin{thm}[Maderna and Venturelli 2020, \cite{MadernaVenturelli_HyperbolicMotions}]\label{thm_hyperbolic}
    Given $d\in\N$, $d\geq2$, for the Newtonian $N$-body problem in $\R^{d}$ there is a hyperbolic motion $x:[1,+\infty)\rightarrow\mathcal{X}$ of the form
    \begin{displaymath}
        x(t) = at- \log (t) \nabla U(a) + o(1)\quad\text{as }t\rightarrow+\infty,
    \end{displaymath}
    for any initial configuration $x^0=x(1)\in\mathcal{X}$ and for any collisionless configuration $a\in\Omega$.
\end{thm}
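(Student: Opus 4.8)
The plan is to obtain the desired trajectory as the minimizer of a \emph{renormalized} Lagrangian action. Since $a\in\Omega$ we have $a\neq 0$, so the energy $h=\tfrac12\|a\|_{\mathcal M}^2$ is strictly positive and the motion we seek must be genuinely hyperbolic. The natural first-order profile is the free ray corrected by the logarithmic term forced by the homogeneity of $U$: set $\ell_a(t)=at-\log(t)\,\nabla U(a)$, where $\nabla U$ is meant with respect to the mass metric, so that $\ddot\ell_a(t)=t^{-2}\nabla U(a)$ and, $\nabla U$ being homogeneous of degree $-2$, $\ell_a$ solves \eqref{eq_newton} up to a remainder $O(t^{-3}\log t)$. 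I would then search for the solution in the form $x=\ell_a+w$ with $w(t)\to 0$, minimizing
\begin{equation*}
    \mathcal A(x)=\int_1^{+\infty}\Big(\tfrac12\|\dot x\|_{\mathcal M}^2+U(x)-\tfrac12\|\dot\ell_a\|_{\mathcal M}^2-U(\ell_a)\Big)\,dt
\end{equation*}
over paths $x:[1,+\infty)\to\mathcal X$ with $x(1)=x^0$ and $x-\ell_a$ in a suitable weighted Sobolev space that forces $x(t)-\ell_a(t)\to 0$ and $\dot x-\dot\ell_a\in L^2$. First one checks that this class is non-empty (glue a truncation of $\ell_a$ onto the initial configuration over $[1,2]$) and that $\mathcal A$ is finite on it; integrating by parts the cross term and using that $\ell_a$ is an approximate solution, one rewrites
\begin{equation*}
    \mathcal A(x)=\mathrm{const}+\int_1^{+\infty}\Big(\tfrac12\|\dot w\|_{\mathcal M}^2-\langle\ddot\ell_a,w\rangle_{\mathcal M}+U(\ell_a+w)-U(\ell_a)\Big)\,dt,
\end{equation*}
where the linear-in-$w$ contributions are of higher order, leaving a Jacobi-type quadratic form plus integrable remainders.

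Next comes the direct method. Lower semicontinuity is standard: the kinetic term is convex, hence weakly lower semicontinuous, while $U\geq 0$ is continuous along locally uniformly convergent sequences, so Fatou's lemma controls the potential term even before one knows that the weak limit is collisionless. The delicate point is coercivity: from a bound on $\mathcal A$, together with the prescribed value $w(1)=x^0-a$ and the imposed decay of $w$, one must bound $\int_1^{+\infty}\|\dot w\|_{\mathcal M}^2$ and, crucially, prevent minimizing sequences from escaping to infinity; this is precisely the role of the choice of functional space, and a Hardy/Poincar\'e-type inequality on the half-line, combined with the structure of the renormalized potential term and the $t^{-2}$ decay of $\ddot\ell_a$, should provide the needed compactness and a minimizer $x_*$.

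I expect the exclusion of collisions to be the main obstacle. A priori $x_*$ might touch the collision set $\Delta$; however, on every compact subinterval $[\tau_1,\tau_2]\subset(1,+\infty)$ the restriction of $x_*$ minimizes the ordinary fixed-endpoints Lagrangian action with its own boundary data, and since $d\geq 2$ Marchal's theorem rules out interior collisions, so $x_*$ is collision-free on $(1,+\infty)$ (the endpoint $t=1$ requires a short separate argument, or an ejection analysis when $x^0\in\Delta$). Consequently $x_*$ satisfies the Euler--Lagrange equations, is real-analytic on $(1,+\infty)$ by bootstrap, and is a classical solution of \eqref{eq_newton}; its energy equals $h$ since $\dot x_*\to a$ and $U(x_*)\to 0$.

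Finally, the asymptotics. Membership of $x_*-\ell_a$ in the chosen space already gives $x_*(t)=at-\log(t)\nabla U(a)+o(1)$; moreover $\ddot x_*(t)=\nabla U(x_*(t))=O(t^{-2})\in L^1$, hence $\dot x_*(t)\to a$, and by Theorem~\ref{art2_thm_MS} the motion is of the form $at+O(t^{2/3})$, which for $a\in\Omega$ gives $|r_i(t)-r_j(t)|\approx t$, i.e.\ a hyperbolic motion. To make the expansion self-contained one substitutes $x_*=\ell_a+w$ into \eqref{eq_newton}, exploits the homogeneity of $\nabla U$ to obtain $\ddot w=O(t^{-3}\log t)+O(t^{-3}|w|)$, and closes a Gronwall/contraction estimate near infinity to upgrade the qualitative bound $w\to0$ to a quantitative decay rate, confirming the stated form. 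In summary, I expect essentially all the difficulty to lie in the coercivity part of the direct method and in the Marchal-type exclusion of collisions.
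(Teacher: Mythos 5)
Your overall strategy (renormalized action, direct method, Marchal's theorem, then asymptotics) is the same as the paper's, but two steps as written contain genuine gaps. First, the weak lower semicontinuity of the potential part is \emph{not} handled by Fatou's lemma as you claim. The renormalized integrand $U(x)-U(\ell_a)$ changes sign, and neither $U(x)$ nor $U(\ell_a)$ is separately integrable on $[1,+\infty)$ (each decays like $1/t$), so you cannot apply Fatou to the difference nor split it into two finite integrals. The paper explicitly flags this and works around it by splitting the half-line at a time $\bar T$ depending on the $\mathcal{D}^{1,2}_0$-bound of the sequence: on $[1,\bar T]$ one applies Fatou to the nonnegative $U$ alone (the subtracted reference term being a fixed integrable function there), and on $[\bar T,+\infty)$ one writes the difference as an integral of the gradient along a segment, obtains a uniform $L^1$ majorant of order $t^{-3/2}$, and concludes by dominated convergence. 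Some version of this splitting is unavoidable.

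Second, your derivation of the expansion $x(t)=at-\log(t)\nabla U(a)+o(1)$ is circular as stated. If the admissible class only imposes $\dot w\in L^2$ with $w(1)$ fixed, then the Hardy-type bound gives only $|w(t)|\leq \|w\|_{\mathcal D}\sqrt{t}$, which is far from $o(1)$; indeed $\dot w\in L^2(1,+\infty)$ does not even imply that $w(t)$ converges. If instead you build the decay $w(t)\to 0$ into the function space, you must then verify that this constraint is weakly closed and that coercivity holds in the corresponding stronger norm, which you do not address; moreover the class could a priori be empty of minimizers. The paper avoids this entirely by taking the reference path to be simply $r_0(t)=at$ (no logarithmic correction), allowing the perturbation to grow like $\sqrt t$, and recovering the sharp expansion a posteriori: from $\dot\varphi\in L^2$ one gets $\liminf_{t\to\infty}|\dot\varphi(t)|=0$, hence the energy equals $\tfrac12\|a\|^2_{\mathcal M}>0$; Chazy's lemma (Lemma \ref{art2_lem4.1}) then yields $\dot x(t)\to a$, and Chazy's theorem (Theorem \ref{art2_thm_chazy}) supplies the full expansion $x(t)=at-\log(t)\nabla U(a)+o(1)$. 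Your concluding Gronwall/bootstrap sketch could in principle replace the appeal to Chazy's theorem, but it cannot be bootstrapped from the unproven claim that space membership already gives $w=o(1)$.
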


As far as the parabolic case is concerned, in addition to providing an alternative proof,  we will be able to extend the result of Maderna and Venturelli \cite{MadernaVenturelli_GloballyMinimizingParabolic} by improving the estimate of the remainder as follows.

\begin{thm}\label{thm_parabolic}
    Given $d\in\N$, $d\geq2$, for the Newtonian $N$-body problem in $\R^{d}$ there is a parabolic solution $x:[1,+\infty)\rightarrow\mathcal{X}$ of the form
    \begin{equation}\label{eq:parabolic}
        x(t) = \beta b_m t^{2/3}+o(t^{1/3^+})\quad\text{as }t\rightarrow+\infty,
    \end{equation}
    for any initial configuration $x^0=x(1)\in\mathcal{X}$, for any minimal normalized central configuration $b_m$ and for $\beta = \sqrt[3]{\frac{9}{2}U(b_m)}$.
\end{thm}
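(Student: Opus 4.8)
\textbf{Step 1: the reference motion and the renormalized action.} The starting point is the self-similar parabolic ejection $\bar x(t)=\beta\, b_m\, t^{2/3}$ attached to $b_m$. The central configuration identity $\nabla U(b_m)=-U(b_m)\mathcal M b_m$ together with the choice $\beta^3=\tfrac92 U(b_m)$ are exactly what make $\bar x$ a zero-energy solution of \eqref{eq_newton}, with Lagrangian density $L(\bar x,\dot{\bar x})=\tfrac12\|\dot{\bar x}\|_\mathcal M^2+U(\bar x)=\tfrac43\beta^2 t^{-2/3}$, which is not integrable on $[1,+\infty)$. I would therefore renormalize, setting
\[
\mathcal A^{ren}(x)=\int_1^{+\infty}\Bigl[L(x,\dot x)-L(\bar x,\dot{\bar x})\Bigr]\,\ud t ,
\]
plus a boundary correction at $t=1$ encoding the mismatch between $x^0$ and $\bar x(1)$, which appears naturally after integrating by parts the cross term $\langle\dot{\bar x},\dot y\rangle_\mathcal M$ with $y:=x-\bar x$ and using $\mathcal M\ddot{\bar x}=\nabla U(\bar x)$. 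The functional would be studied on the affine space of locally $H^1$ curves with $x(1)=x^0$ for which $y$ lies in a weighted Sobolev space on $[1,+\infty)$ with a controlled growth rate, chosen weak enough that every collisionless motion asymptotic to $\bar x$ is admissible, and strong enough that $\mathcal A^{ren}$ is finite and coercive there. It may be technically cleaner to first run the argument on truncated intervals $[1,T]$ with endpoint constraint $x(T)=\bar x(T)$, produce minimizers $x_T$ with $\mathcal A^{ren}$ bounded uniformly in $T$, and then pass to the limit $T\to+\infty$.

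\textbf{Step 2: existence, absence of collisions, regularity.} One shows that $\mathcal A^{ren}$ is bounded below, coercive and weakly lower semicontinuous: the kinetic part is handled by convexity in $\dot x$, while the potential part is kept under control by $U\ge 0$, the homogeneity of $U$ and the renormalization, the delicate case being curves that wander far from $\bar x$. The direct method then yields a minimizer $x_m$, and one must verify that it genuinely belongs to the functional space (no loss of compactness at infinity). The crucial point is then that $x_m$ has no collisions for $t>1$: interior collision instants are excluded by a Marchal-type argument — a collision would allow a local, action-decreasing variation via the usual blow-up/averaging estimate — adapted to the renormalized functional on the half-line, while ``collisions at infinity'' are ruled out because the prescribed limit shape $b_m$ is a central configuration, hence collisionless, so the asymptotic constraint itself forbids them. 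Being collision-free, $x_m$ is smooth and solves \eqref{eq_newton}, and its energy is zero by the parabolic (self-similar) nature of the comparison motion. I expect this step, and within it the exclusion of collisions, to be the main obstacle.

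\textbf{Step 3: the refined asymptotics.} Writing $x_m=\bar x+y$ and linearizing \eqref{eq_newton} about $\bar x$ gives, since $D^2U$ is homogeneous of degree $-3$, the Euler-type system
\[
\mathcal M\ddot y=\beta^{-3}\,t^{-2}\,D^2U(b_m)\,y+(\text{lower order terms}),
\]
whose indicial exponents $\sigma$ are the roots of $\sigma(\sigma-1)=\beta^{-3}\lambda$ as $\lambda$ ranges over the spectrum of $\mathcal M^{-1}D^2U(b_m)$. The Euler identity $D^2U(b_m)b_m=2U(b_m)\mathcal M b_m$ yields the exponents $-1/3$ and $4/3$ along $b_m$; crucially, the assumption that $b_m$ is a \emph{minimal} central configuration controls the remaining eigenvalues of $\mathcal M^{-1}D^2U(b_m)$ and forces every perturbation branch compatible with the variational constraint to grow strictly slower than $t^{1/3+\epsilon}$ for every $\epsilon>0$. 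Combining the a priori decay of $y$ coming from its membership in the weighted space with a bootstrap argument (Gronwall or a contraction on the integral formulation) for the full nonlinear remainder then upgrades this to $x_m(t)=\beta\, b_m\, t^{2/3}+o(t^{1/3^+})$, with $x^0$ arbitrary by construction. This sharpened indicial analysis is precisely what improves the remainder estimate of \cite{MadernaVenturelli_GloballyMinimizingParabolic}.
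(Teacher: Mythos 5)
Your Steps 1--2 follow essentially the paper's route: the paper minimizes, over $\varphi\in\mathcal{D}_0^{1,2}(1,+\infty)$ with $x=r_0+\varphi+\tilde x^0$ and $r_0(t)=\beta b_m t^{2/3}$, the renormalized action $\int_1^{+\infty}\frac12\|\dot\varphi\|_\mathcal{M}^2+U(r_0+\varphi+\tilde x^0)-U(r_0)-\langle\nabla U(r_0),\varphi\rangle\,\ud t$, and then applies Marchal's theorem; no truncation to $[1,T]$ and limit $T\to+\infty$ is needed. Two caveats, though. First, the functional $\int[L(x,\dot x)-L(\bar x,\dot{\bar x})]\,\ud t$ is \emph{not} finite on this space before your integration by parts: the cross term $\int\langle\mathcal{M}\dot{\bar x},\dot y\rangle$ and the first-order potential term $\int\langle\nabla U(\bar x),y\rangle$ (of size $t^{-4/3}\cdot t^{1/2}=t^{-5/6}$) each diverge for admissible $\varphi$, and their combination leaves a boundary term $\langle\mathcal{M}\dot{\bar x}(T),y(T)\rangle=O(T^{1/6})$ with no limit; the integrated-by-parts expression, with $-\langle\mathcal{M}\ddot r_0,\varphi\rangle$ subtracted pointwise under the integral, must be taken as the \emph{definition}. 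Second, and this is the genuine gap in your Step 2: coercivity is not a soft consequence of $U\ge0$, homogeneity and the renormalization. The renormalized potential density is, to second order, $\frac12\langle\nabla^2U(r_0)\varphi,\varphi\rangle$, which is bounded below only by $-\frac{U(b_m)}{\|r_0\|_\mathcal{M}^3}\|\varphi\|_\mathcal{M}^2=-\frac29\,\frac{\|\varphi\|_\mathcal{M}^2}{t^2}$ (this Hessian bound is exactly where minimality of $b_m$ enters the \emph{existence} part, not only the asymptotics), and coercivity survives only because the Hardy constant $4$ satisfies $4\cdot\frac29=\frac89<1$, leaving the margin $\frac1{18}\|\varphi\|_\mathcal{D}^2$. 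Without naming this quantitative balance your outline gives no reason why the negative quadratic part does not swallow the kinetic term.

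Your Step 3 is a genuinely different route from the paper's. You propose an indicial analysis of the Euler-type linearization at $\bar x$ plus a Levinson/Gronwall bootstrap; the paper instead runs a Moser-type test-function argument directly on the Euler--Lagrange equation, testing against $\psi_k=\eta^2\min\{k,\|\varphi\|_\mathcal{M}^{\delta-1}\}\varphi$ and using the same Hessian bound together with Hardy's inequality under the condition $\frac{4\delta}{(\delta+1)^2}>\frac89$, to conclude $\|\varphi\|_\mathcal{M}^{(\delta+1)/2}\in\mathcal{D}_0^{1,2}$ and hence $\|\varphi(t)\|_\mathcal{M}\le ct^{1/(\delta+1)}$ for every $\delta\in(1,2)$. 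Your spectral picture is consistent: minimality gives $\lambda\ge-U(b_m)$ on the complement of $b_m$, so the indicial roots split as $\sigma_-\le\frac13$ and $\sigma_+\ge\frac23$, the a priori bound $\varphi=O(t^{1/2})$ excludes the $\sigma_+$ branches, and the rotational kernel (always present for $d\ge2$) realizes $\sigma_-=\frac13$ exactly, which also shows the exponent $1/3^+$ is essentially sharp. What your route buys is this finer branch-by-branch information; what it costs is an asymptotic-integration theorem for the perturbed, resonant Euler system, which is considerably more delicate to justify than the paper's self-contained test-function estimate.
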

Here, a minimal central configuration is a minimizer of the potential $U$ constrained to the intertia ellipsoid $\mathcal E=\{x\in\mathcal X\,:\, \|x\|_\mathcal{M}^2=1\}$. 
As said, the existence of hyperbolic and parabolic solutions for the Newtonian $N$-body problem has already been proved by Maderna and Venturelli in 2020 and 2009, respectively. In \cite{MadernaVenturelli_HyperbolicMotions}, the authors proved the existence of hyperbolic motions for any prescribed limit shape, any initial configuration of the bodies and any positive value of the energy. These solutions, whose actions are infinite, were found as the limits of locally converging subsequences in families of minimizing motions, where the existence of the approximate solutions are minimal geodesics of the Maupertuis'-Jacobi metric. More specifically, these solutions  were obtained as the limits of solutions of sequences of approximating two-point boundary value problems. To exclude collisions, both proofs in \cite{MadernaVenturelli_HyperbolicMotions} and \cite{MadernaVenturelli_GloballyMinimizingParabolic} invoke Marchal's Principle ensuring the absence of collisions for action-minimizing paths (Theorem \ref{thm_marchal}). There tarjectories are characteristic curves of a global viscosity solutions for the Hamilton-Jacobi equation $H(x, \nabla u) = h$. In such, these solutions are fixed points of the associated Lax-Oleinik semigroup. In \cite{MadernaVenturelli_GloballyMinimizingParabolic}, for any starting configuration they proved the existence of parabolic arcs asymptotic to any prescribed normalized minimal central configuration.

Compared to Maderna and Venturelli's articles, in this paper we show  alternative and simpler proofs for the existence of hyperbolic and parabolic solutions in a unitary framework, which is based on a straightforward application of the Direct Method of the Calculus of Variations to minimize the renormalized Lagrangian actions associated to the problem.  This approach has the advantage of allow us to complement the existence of parabolic arcs with their (almost exact) expansion \eqref{eq:parabolic}. 

Finally, after proving Theorems \ref{thm_hyperbolic} and \ref{thm_parabolic}, we will extend our approach to similarly prove the existence of hyperbolic-parabolic solutions for the $N$-body problem. In order to state our main result we need to introduce the \emph{$a$-cluster partition} associated with a collision asymptotic velocity $a\in\Delta\setminus\{0\}$, where clusters are the equivalence classes of the relation $i\sim j \Longleftrightarrow a_i-a_j=0$. Given a cluster $K$, we consider the associated partial potential $U_K$, where the sum in \eqref{eq:potential} is restricted to the cluster $K$. 
The $a$-clustered potential $U_a$  is the sum of all the cluster potentials of the partition. Now we can state our main theorem:

\begin{thm}\label{thm_partially_hyperbolic}
    Given $d\in\N$, $d\geq2$, for the Newtonian $N$-body problem in $\R^{d}$ there is a hyperbolic-parabolic motion $x:[1,+\infty)\rightarrow\mathcal{X}$ of the form
    \begin{displaymath}
        x(t) = at+\beta b_m t^{2/3}+o(t^{1/3^+})\quad\text{as }t\rightarrow+\infty,
    \end{displaymath}
    for any initial configuration $x^0=x(1)\in\mathcal{X}$, for any collision configuration $a\in\Delta$, for any normalized minimal central configuration\footnote{See Section \ref{sec_hyperbolic_parabolic} for the exact definition of $\beta$ and $b_m$.} $b_m\in\mathcal{X}$ of the $a$-clustered potential and for any choice of the energy constant $h>0$.
\end{thm}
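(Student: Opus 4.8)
\emph{Proof proposal.} The scheme parallels the proofs of Theorems~\ref{thm_hyperbolic} and~\ref{thm_parabolic}, but now with the two expansion scales $t$ (between clusters) and $t^{2/3}$ (inside clusters) present simultaneously; throughout we take $\|a\|_\mathcal M^2=2h$, with no loss of generality, since rescaling $a$ preserves its cluster partition and the motion we construct will have asymptotic velocity $a$, hence energy $\tfrac12\|a\|_\mathcal M^2=h$. Let $\{K\}$ be the $a$-cluster partition, $U_a=\sum_K U_K$ the clustered potential, $b_m$ the prescribed normalized minimal central configuration of $U_a$, and $\beta$ the corresponding constant. \textbf{Step 1 (reference curve).} Set $x_0(t)=at+\beta b_m t^{2/3}$. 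Using that $U$ is $(-1)$-homogeneous, the identity $\nabla U_a(b_m)=-U_a(b_m)\,\mathcal M b_m$ valid at normalized central configurations of $U_a$, and the value of $\beta$, a direct computation gives
\[
 \mathcal M\ddot x_0(t)-\nabla U(x_0(t))=-\nabla U^{\,\mathrm{inter}}(x_0(t))=O(t^{-2}),
\]
i.e.\ the intra-cluster force, of exact order $t^{-4/3}$, is cancelled and only the lower-order inter-cluster force survives; moreover $U(x_0(t))=\beta^{-1}U_a(b_m)\,t^{-2/3}+O(t^{-1})$, so $U(x_0)\notin L^1(1,+\infty)$, which is exactly why a renormalization is needed, and $\dot x_0(t)\to a$ with $\langle a,b_m\rangle_\mathcal M=0$.

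\textbf{Step 2 (renormalized action).} Writing $x=x_0+v$ and $L(x,\dot x)=\tfrac12\|\dot x\|_\mathcal M^2+U(x)$, we consider
\[
 \mathcal A_{\mathrm{ren}}(v)=\int_1^{+\infty}\Big(L(x_0+v,\dot x_0+\dot v)-L(x_0,\dot x_0)+\langle\nabla U(x_0),v\rangle\Big)\,\ud t
\]
on the affine space $\mathcal H$ of curves $v\colon[1,+\infty)\to\mathcal X$ with $v(1)=x^0-x_0(1)$ and $\dot v$ in a weighted $L^2$ space, the weight being calibrated so that the curves in $\mathcal H$ grow at most like $o(t^{1/3^+})$ — precisely the decay exploited below, and what ultimately yields the remainder in the statement. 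The extra linear term $\langle\nabla U(x_0),v\rangle$ is chosen so that, expanding the square and integrating the cross term $\int\langle\mathcal M\dot x_0,\dot v\rangle$ by parts, the non-integrable densities coming from $\tfrac12\|\dot x_0\|_\mathcal M^2$ and from $U(x_0)$ disappear and one is left, up to boundary terms that are harmless for the variational problem, with
\[
 \mathcal A_{\mathrm{ren}}(v)=(\text{boundary terms})+\int_1^{+\infty}\Big(\tfrac12\|\dot v\|_\mathcal M^2+\big(U(x_0+v)-U(x_0)\big)-\langle\mathcal M\ddot x_0-\nabla U(x_0),v\rangle\Big)\,\ud t .
\]
On $\mathcal H$ the last integrand is summable: the defect term is $O(t^{-2})|v|$, while $|U(x_0+v)-U(x_0)|\le\|\nabla U(\xi)\|\,|v|\approx t^{-4/3}|v|=o(t^{-1+\varepsilon})$, since $|v|=o(t^{1/3^+})$ is negligible compared with the intra-cluster separations $\approx t^{2/3}$; hence $\mathcal A_{\mathrm{ren}}$ is well defined and of class $C^1$ on $\mathcal H$, with Euler--Lagrange equation $\mathcal M\ddot x=\nabla U(x)$ on $\{x\notin\Delta\}$.

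\textbf{Step 3 (minimization, collisions, asymptotics).} Coercivity holds because $\tfrac12\|\dot v\|_{L^2_\mathcal M}^2$ dominates: both the defect term and $U(x_0+v)-U(x_0)$ are bounded, up to a constant, by the weighted norm of $v$ through the embedding of $\mathcal H$ — again the sublinear growth forced by the weight is what lets one bound the potential difference from below. Weak lower semicontinuity along a minimizing sequence $v_n\rightharpoonup\bar v$ follows from convexity of the kinetic part, locally uniform convergence $v_n\to\bar v$ together with Fatou's lemma for the nonnegative $U(x_0+v_n)$, and weak continuity of the remaining linear pieces; the Direct Method then produces a minimizer $\bar x=x_0+\bar v$, which solves Newton's equations wherever it avoids $\Delta$. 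To rule out collisions on $(1,+\infty)$ one notes that altering $\bar x$ on a compact subinterval with fixed endpoints changes $\mathcal A_{\mathrm{ren}}$ by exactly the change of the genuine action $\int L$, so $\bar x$ restricts to a fixed-ends minimizer and Marchal's Principle (Theorem~\ref{thm_marchal}, available since $d\ge2$) forbids interior collisions — note that $x^0$ itself is allowed to lie in $\Delta$. Finally, $\bar v\in\mathcal H$ already gives $\bar x(t)=at+\beta b_m t^{2/3}+o(t^{1/3^+})$ with the $t^{2/3}$-coefficient pinned at $\beta b_m$; integrating the force estimate twice along the now-collisionless solution shows $\dot{\bar x}(t)\to a$, whence by conservation of energy the level is $h=\tfrac12\|a\|_\mathcal M^2$, closing the proof. (As in the hyperbolic case, the value function $x^0\mapsto\min_{\mathcal H}\mathcal A_{\mathrm{ren}}$, corrected by a linear term, is then a viscosity solution of $H(x,\nabla u)=h$.)

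\textbf{Main obstacle.} The crux lies in Steps~2--3: constructing the renormalized functional so that it is simultaneously finite, coercive and weakly lower semicontinuous in the \emph{clustered} regime — that is, choosing the weight defining $\mathcal H$ so that the two scales $t$ and $t^{2/3}$ are reconciled at once, the potential difference $U(x_0+v)-U(x_0)$ does not destroy coercivity, and admissible curves (in particular the logarithmic correction inherited from the hyperbolic part of the motion) grow no faster than the prescribed $o(t^{1/3^+})$. By comparison, the exclusion of collisions is a routine reduction to Marchal's Principle, and the final asymptotic refinement is a standard bootstrap on Newton's equation.
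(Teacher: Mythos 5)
Your overall architecture (reference curve $r_0(t)=at+\beta b t^{2/3}$ cancelling the intra-cluster force, renormalization of the action, Direct Method, Marchal's Principle) matches the paper's, and your renormalized functional differs from the paper's only by integrable terms, so it is an acceptable variant. However, there is a genuine gap at the heart of Step 2: you propose to minimize over a space $\mathcal H$ whose weighted-$L^2$ condition on $\dot v$ is ``calibrated so that curves grow at most like $o(t^{1/3^+})$'', and you then read the remainder estimate off membership in $\mathcal H$. No such space can work. To force $|v(t)|=o(t^{1/3+\varepsilon})$ from an integral condition on $\dot v$ you need a weight $w(t)\to+\infty$ (roughly $w(t)\sim t^{1/3}$), but the kinetic part of the action only controls the \emph{unweighted} $L^{2}$ norm of $\dot v$; hence the functional cannot be coercive with respect to the norm of $\mathcal H$, minimizing sequences need not be bounded there, and the Direct Method collapses. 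The paper avoids this by minimizing over the unweighted space $\mathcal D^{1,2}_0(1,+\infty)$, which only yields the a priori bound $\|\varphi(t)\|_{\mathcal M}\le\|\varphi\|_{\mathcal D}\sqrt t$, and then proves the sharper growth $\|\varphi(t)\|_{\mathcal M}\le c\,t^{1/(\delta+1)}$ for every $\delta\in(1,2)$ \emph{a posteriori}, as a regularity estimate for solutions of the Euler--Lagrange equation, via a truncated test function $\psi_k=\eta^2\min\{k,\|\varphi\|_{\mathcal M}^{\delta-1}\}\varphi$ and the Hardy inequality with the near-sharp constant (the condition $\tfrac{4\delta}{(\delta+1)^2}-\tfrac89(1+\mu)>0$). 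That bootstrap is an essential, nontrivial step of the proof of the stated asymptotics, not a consequence of the choice of function space.

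Two secondary points. First, your coercivity claim that the potential difference is ``bounded, up to a constant, by the weighted norm of $v$'' misses the actual mechanism: for the intra-cluster (parabolic) pairs the negative part of $U(r_0+v)-U(r_0)-\langle\mathcal M\ddot r_0,v\rangle$ is of the same quadratic order as the kinetic term and is only beaten because the Hardy inequality gives it the factor $8/9<1$, leaving the margin $\tfrac1{18}\|\varphi\|_{\mathcal D}^2$; for the inter-cluster pairs one must instead show the loss is only logarithmic in $\|\varphi\|_{\mathcal D}$. The paper obtains this by decomposing the action pairwise into $\mathcal A_K^{ij}$ and $\mathcal A_{K_1,K_2}^{ij}$ with different renormalizations, which your global formulation obscures. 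Second, your lower semicontinuity argument ``Fatou's lemma for the nonnegative $U(x_0+v_n)$'' does not parse on the half-line, since $\int_1^{+\infty}U(x_0+v_n)\,\ud t=+\infty$ and cannot be separated from $\int U(x_0)$; one must split $[1,+\infty)$ at a time $\bar T$ depending on the bound of the sequence, use Fatou only on $[1,\bar T]$, and prove genuine $L^1$ convergence of the renormalized potential on the tail by domination, as the paper does.
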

Intuitively, hyperbolic-parabolic motions are those expansive motions of the form $x(t)=at+o(t)$, as $t\rightarrow+\infty$, when their limit shapes have collisions, that is, $a\in \Delta\setminus\{0\}$. This means that hyperbolic-parabolic motions can be viewed as clusters of bodies moving asymptotically with a linear growth, while the distances of the bodies inside each cluster grow with a rate of order $t^{2/3}$ and, referred to its center of mass, the cluster has a limit shape which is a prescribed minimal configuration of the cluster potential $U_K$. For the Newtonian $N$-body problem, the existence of hyperbolic-parabolic solutions for any prescribed positive energy and any given initial configuration of the bodies has been tackled by Burgos in \cite{Burgos_PartiallyHyperbolic}, where his proof follows from and application of  Maderna and Venturelli's Theorem on the existence of hyperbolic motions and a limiting procedure as the limit shape approaches the collision set. With respect to Burgos' result, we can provide a a much wider class of such hyperbolic-parabolic trajectories. Moreover, our approach provides a much more detailed information about the asymptotic behaviour of the solution and a better description of the motion of the bodies. Indeed, to prove Theorem \ref{thm_partially_hyperbolic}, we partition the set of bodies following the natural cluster partition that was presented by Burgos and Maderna in \cite{BurgosMaderna_GeodesicRays} and is defined as follows: if $x(t)=(r_1(t),...,r_N(t))$ and $a=(a_1,...,a_N)$, then $a_i=a_j$ if and only if $|r_i(t)-r_j(t)|=O(t^{2/3})$, and the partition of the set of bodies is defined by this equivalence relation.  Using this particular partition, we are able to decompose the Lagrangian action into two terms: the first is related to the hyperbolic motion of the clusters and the second is related to the parabolic motion of the bodies inside the clusters. Through similar proofs to the ones in Theorems \ref{thm_hyperbolic} and \ref{thm_parabolic}, we can thus apply the Direct Method of the Calculus of Variation and Marchal's Theorem also to the case of hyperbolic-parabolic motions.

\begin{cor}
    The motions $x(t)$ given by Theorems \ref{thm_hyperbolic}, \ref{thm_parabolic} and \ref{thm_partially_hyperbolic} are continuous at $t=1$ and collisionless for $t>1$. Moreover they are free time action minimizers at their energy level.
\end{cor}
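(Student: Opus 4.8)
The plan is to derive all three statements from the variational origin of the motions together with Marchal's Principle (Theorem~\ref{thm_marchal}). Recall from the proofs of Theorems~\ref{thm_hyperbolic}, \ref{thm_parabolic} and \ref{thm_partially_hyperbolic} that $x$ is obtained, by the Direct Method, as a minimizer of a renormalized action $\mathcal{A}_{\mathrm{ren}}$ over an admissible class of curves $y:[1,+\infty)\to\mathcal{X}$ with $y(1)=x^0$ and with the prescribed leading-order asymptotics, that $x$ solves Newton's equation~\eqref{eq_newton}, and that it has constant mechanical energy equal to the value $h$ attached to its asymptotics ($h=0$ in the purely parabolic case). The observation I would single out at the outset is that on every compact subinterval $[t_1,t_2]\subset(1,+\infty)$ the functional $\mathcal{A}_{\mathrm{ren}}$ coincides, up to an additive constant and a term depending only on the endpoints $y(t_1),y(t_2)$, with the genuine action at energy $h$, namely $\int_{t_1}^{t_2}\bigl(\tfrac12\|\dot y\|_{\mathcal{M}}^{2}+U(y)+h\bigr)\,dt$; this is what makes the rest go through.

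For continuity at $t=1$: if $x^0\in\Omega$ there is nothing to prove, since $x$ solves the analytic system~\eqref{eq_newton} near $t=1$. If $x^0\in\Delta$, I would combine finiteness of $\mathcal{A}_{\mathrm{ren}}(x)$ with the observation above to get $\int_1^{1+\varepsilon}\bigl(\tfrac12\|\dot x\|_{\mathcal{M}}^{2}+U(x)\bigr)\,dt<+\infty$ for some $\varepsilon>0$; hence $x\in H^{1}\bigl([1,1+\varepsilon],\mathcal{X}\bigr)$, so $x$ is absolutely continuous up to $t=1$ and attains there the value $x^0$ imposed by the admissible class.

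For the absence of collisions on $(1,+\infty)$: fixing $1<t_1<t_2<+\infty$, the minimality of $x$ and the observation above show that $x|_{[t_1,t_2]}$ is a fixed-endpoints minimizer of the action at energy $h$, hence it satisfies the hypotheses of Marchal's Theorem, and therefore $x(t)\notin\Delta$ for every $t\in(t_1,t_2)$. Letting $t_1\downarrow 1$ and $t_2\uparrow+\infty$ gives $x(t)\in\Omega$ for all $t>1$.

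For free-time minimality: given $1\le s<t<+\infty$, I would prove that $x|_{[s,t]}$ minimizes the action at energy $h$ among all paths joining $x(s)$ to $x(t)$ in arbitrary time, the half-line statement following by letting $s,t$ vary. Arguing by contradiction, a strictly cheaper competitor $\eta:[s,t']\to\mathcal{X}$ with $t'\ne t$ would be spliced into $x$ in place of $x|_{[s,t]}$, the tail $x|_{[t,+\infty)}$ being reattached after a time translation by $t'-t$; since $x$ has energy $h$, the glued curve $\hat x$ is continuous, and $\mathcal{A}_{\mathrm{ren}}(\hat x)-\mathcal{A}_{\mathrm{ren}}(x)$ works out to the negative action deficit of $\eta$ over $x|_{[s,t]}$ plus a correction from the renormalized tail, which contradicts minimality provided the correction is negligible. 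I expect this last point to be the main obstacle: translating the tail in time both perturbs the sub-leading terms of the prescribed expansion (e.g.\ $at-\log t\,\nabla U(a)+o(1)$ in the hyperbolic case, $\beta b_m t^{2/3}+o(t^{1/3^+})$ in the parabolic and hyperbolic-parabolic ones) and shifts the divergent profile subtracted in $\mathcal{A}_{\mathrm{ren}}$; one must quantify both effects, confirm that the perturbed tail can still be made admissible without asymptotically increasing its action, and verify that the net change of $\mathcal{A}_{\mathrm{ren}}$ stays strictly negative. The continuity and collision-exclusion parts, by contrast, are essentially formal once the construction and Marchal's Principle are in place.
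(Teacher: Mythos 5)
Your overall strategy coincides with the paper's: continuity at $t=1$ is immediate from the membership $\varphi\in\mathcal{D}_0^{1,2}(1,+\infty)$ (which already gives $x\in H^1_{loc}$ with $x(1)=x^0$, so your detour through finiteness of the renormalized action is not even needed), absence of collisions follows from Theorem \ref{thm_marchal} applied on compact subintervals exactly as you describe, and free-time minimality is attacked by contradiction, splicing a cheaper competitor onto a time-translated copy of the tail. The first two parts are complete.

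The genuine gap is the one you flag yourself and then leave open: you never verify that the ``correction from the renormalized tail'' is controlled, and this verification is the entire content of the paper's proof, not a routine check. Three ingredients are needed. First, since the minimizer $\varphi$ is not compactly supported, the boundary term at infinity produced by integrating $-\langle\mathcal{M}\ddot r_0,\varphi\rangle$ by parts is not under control; the paper first replaces $\varphi$ by a compactly supported $\tilde\varphi$ with $\mathcal{A}(\tilde\varphi)\le\mathcal{A}(\varphi)+\varepsilon$, and performs the surgery on this $\varepsilon$-minimizer, whose tail is exactly $r_0+\tilde x^0$. Second, for the spliced curve $\tilde\sigma$ the associated $\bar\varphi(t)=\tilde\sigma(t)-r_0(t)-\tilde x^0$ satisfies $\bar\varphi(t)=a(T-\bar T)+o(1)$ at infinity, so the same integration by parts now produces the non-vanishing boundary contribution $\langle\mathcal{M}a,a\rangle(\bar T-T)=2h(\bar T-T)$; this term is precisely what converts the fixed-time comparison of renormalized actions into the free-time inequality with the $h(b-a)$ penalties, and it is invisible in your sketch. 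Third, the shifted divergent profile contributes $\int_T^{\bar T}L_0^h(t)\,\ud t+\int_{\bar T}^{+\infty}\bigl(L_0^h(t)-L_0^h(t-\bar T+T)\bigr)\,\ud t$ with $L_0^h=L(r_0,\dot r_0)-h$, and this sum is shown to vanish by Proposition \ref{lem_freetime} applied to $L_0^h$ extended by zero for $t\le T$ --- here it is essential that one subtracts $L_0^h$, which tends to $0$ at infinity, rather than $L_0$, which tends to $h$. Only after these three computations does the contradiction $\mathcal{A}(\tilde\varphi)>\mathcal{A}(\bar\varphi)+\varepsilon$ against the $\varepsilon$-minimality of $\tilde\varphi$ materialize. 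As it stands, the free-time part of your proposal is a correct plan with the decisive estimates missing.
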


As already pointed out by Maderna and Venturelli, a family of hyperbolic trajectories that are minimal in free time is associated, via the Busemann function, with a solution of the time-independent Hamilton-Jacobi equation. A further advantage of the approach through the direct minimization of a renormalized  action functional is that a value function, dependent on the initial point, is directly defined. As we shall outline in Section \ref{sec:HJ}, a linear correction to the value function is, as expected from theory, a solution of the Hamilton-Jacobi equation.

Our general strategy in the proofs of Theorems \ref{thm_hyperbolic}, \ref{thm_parabolic} and \ref{thm_partially_hyperbolic} is to seek solutions to \eqref{eq_newton} which are lower order perturbations of a given path:
\begin{displaymath}
    x(t) = r_0(t) + \varphi(t)+\Tilde x_0\,, \qquad \Tilde x_0=x^0-t_0(1).
\end{displaymath}
Here $\varphi(t)$ is the lower order term  and the reference path $r_0$ is linear in the hyperbolic case, is a parabolic self-similar solution in the parabolic one and mixes the two types in the hyperbolic-parabolic case and $\Tilde x_0=x_0-r_0(1)$. In particular, we will consider functions $\varphi$ belonging to the functional space of continuous functions on $[1,+\infty)$ which vanish at $t = 1$ and can be written as primitives of functions in $L^2(1,+\infty)$. With this choice of space, which is denoted by $\mathcal{D}_0^{1,2}(1,+\infty)$, we will be able to give the problem a global variational structure, so that we can prove the existence of solutions of the $N$-body problem through the minimization of a Lagrangian action on the space $\mathcal{D}_0^{1,2}(1,+\infty)$. The crucial idea will be to minimize the action after a necessary proper renormalization (cfr. Definition \ref{def:renormalized_action}), since the Lagrangian is never integrable at infinity.

\section{The variational setting}
                                                  
For the $N$-body problem, the Hamiltonian $H$ is defined over $\Omega\times\R^{dN}$ as
\begin{equation}\label{eq:hamiltonian}
    H(x,p) = \frac{1}{2}\|p\|_{\mathcal{M}^{-1}}^2 - U(x),
\end{equation}
while the Lagrangian is defined over $\Omega\times\R^{dN}$ as
\begin{displaymath}
    L(x,v) = \frac{1}{2}\|v\|_\mathcal{M}^2 + U(x).
\end{displaymath}
This means, in particular, that $L$ and $H$ become infinite when $x$ has collisions. Given two configurations $x,y\in\mathcal{X}$ and $T>0$, we denote by $\mathcal{C}(x,y,T)$ the set of absolutely continuous curves $\gamma:[a,b]\rightarrow\mathcal{X}$ going from $x$ to $y$ in time $T=b-a$ and we write $\mathcal{C}(x,y)=\bigcup_{T>0}\mathcal{C}(x,y,T)$. We define the Lagrangian action of a curve $\gamma\in\mathcal{C}(x,y,T)$ as the functional
\begin{displaymath}
    \mathcal{A}_L(\gamma) = \int_{a}^{b} L(\gamma,\dot{\gamma})\ \ud t = \int_{a}^{b} \frac{1}{2}\|\dot{\gamma}\|_\mathcal{M}^2 + U(\gamma)\ \ud t. 
\end{displaymath}

Hamilton's principle of least action implies that if a curve $\gamma$ is a minimizer of the Lagrangian action in $\mathcal{C}(x,y,T)$, then $\gamma$ satisfies Newton's equations at every time $t\in[a,b]$ in which $\gamma(t)$ has no collisions.  However, as Poincaré already noticed in \cite{Poincare_SolutionsPeriodiques}, there are curves with isolated collisions and finite action, which means that minimizing orbits may not always be true motions. The following theorem represents a big step forward in this theory, since it enabled the application of variational techniques to study the Newtonian $N$-body problem. The main idea to prove the theorem was given by Marchal in \cite{Marchal_MethodOfMinimization}, while more complete proofs are due to Chenciner in \cite{Chenciner} and Ferrario and Terracini in \cite{FerrarioTerracini}.
\begin{thm}[Marchal \cite{Marchal_MethodOfMinimization},  Chenciner  \cite{Chenciner}, Ferrario and Terracini \cite{FerrarioTerracini}]\label{thm_marchal}
Given $x,y\in\mathcal{X}$, if $\gamma\in\mathcal{C}(x,y)$ is defined on some interval $[a,b]$ and satisfies
\begin{displaymath}
    \mathcal{A}_L(\gamma) = \min\{\mathcal{A}(\sigma)\ |\ \sigma\in\mathcal{C}(x,y,b-a)\},
\end{displaymath}
then $\gamma(t) \in \Omega$ for all $t \in (a,b)$.
\end{thm}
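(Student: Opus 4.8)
The plan is to argue by contradiction: supposing the minimizer $\gamma$ has a collision at some interior instant $t_0\in(a,b)$, I will produce a curve in $\mathcal{C}(x,y,b-a)$ with strictly smaller action, contradicting minimality. The first move is to make the problem purely local. Since the restriction of a fixed-endpoint, fixed-time minimizer to any subinterval is again a minimizer (for the restricted endpoints and time, by cut-and-paste), it suffices to rule out interior collisions for a minimizer on an arbitrarily short interval $[t_0-\delta,t_0+\delta]\subset(a,b)$. One may moreover reduce to the case that $t_0$ is an \emph{isolated} collision instant: $\gamma$ has finite action and, being a classical solution of \eqref{eq_newton} on every collision-free sub-arc, conserves energy there, and the classical asymptotic analysis of finite-action paths near collisions (von~Zeipel / Sundman--Sperling, as carried out in the works of Chenciner and Ferrario--Terracini) shows that at any collision a well-defined cluster $C$ of bodies collides while the others stay apart, and that such instants cannot accumulate.

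Next I would record the behaviour of $\gamma$ near $t_0$. Writing $t_0=0$ after translation, the center of mass of the colliding cluster $C$ is $C^1$ through $t=0$, while the internal configuration of $C$ shrinks at the rate $|t|^{2/3}$, its normalized shape converging along subsequences to a central configuration of the cluster potential $U_C$; in other words the colliding arc is asymptotic, modulo higher-order terms, to a homothetic ejection--collision motion of $C$. Here both $\int |t|^{-2/3}$ of the kinetic energy and of the potential converge, so the collision contributes only \emph{finite} action — which is exactly why a strictly cheaper competitor is not ruled out by any soft argument and genuine work is required.

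The heart of the matter is then Marchal's averaging argument. Introduce the constrained value function
\begin{displaymath}
    \Phi(\xi) \ =\ \min\bigl\{\,\mathcal{A}_L(\sigma)\ :\ \sigma\in\mathcal{C}\bigl(\gamma(-\delta),\gamma(\delta),2\delta\bigr),\ \sigma(0)=\xi\,\bigr\},
\end{displaymath}
the least action among paths forced to pass through $\xi$ at time $0$. Minimality of $\gamma$ gives $\Phi(\xi)\geq\Phi(c)=\mathcal{A}_L\bigl(\gamma|_{[-\delta,\delta]}\bigr)$ for every $\xi$, with $c=\gamma(0)\in\Delta$; thus the collision configuration $c$ would be a global minimum of $\Phi$. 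I would derive a contradiction by showing $\Phi$ cannot be minimized at a collision configuration: perturbing $\xi$ away from $c$ within the configurations of the cluster $C$ (so as to separate the colliding bodies, keeping the non-colliding ones unmoved) and building an explicit collision-free competitor through each such $\xi$, the extra kinetic cost over $[-\delta,\delta]$ is $O(|\xi-c|^2/\delta)$ with a sign-indefinite cross term that \emph{averages to zero} over a small sphere of perturbations, whereas the gain coming from the singular part of $\int U_C$ — controlled via the $|t|^{2/3}$ asymptotics — is negative and of strictly lower order in $|\xi-c|$ on average. Hence, for a suitable choice of $\delta$ and of the sphere radius, the mean of $\Phi$ over that sphere is strictly below $\Phi(c)$, so some admissible $\xi$ gives $\Phi(\xi)<\mathcal{A}_L(\gamma|_{[-\delta,\delta]})$ — a contradiction. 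Equivalently, this is the assertion that homothetic ejection--collision solutions are never locally action-minimizing once $d\geq2$. Simultaneous collisions of several clusters at distinct points, and the total-collision case, are handled by running the same average cluster by cluster.

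The main obstacle is precisely this last strict-decrease estimate. Because the collision arc already has finite action, the improvement cannot come from compactness or semicontinuity; it must be extracted quantitatively from the fine asymptotics of the colliding cluster and a carefully designed family of variations, and then shown to survive the angular averaging — this is Marchal's key contribution, made fully rigorous by Chenciner and by Ferrario--Terracini. A secondary but essential subtlety is that the argument genuinely needs $d\geq2$ (in dimension one colliding minimizers do occur): the perturbations must be taken transverse to the collision ray, and one must average over enough directions to guarantee a net decrease irrespective of which limiting central configuration the blow-up selects.
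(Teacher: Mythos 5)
The paper does not prove this theorem: it is imported verbatim from the literature, with the proof attributed to Marchal \cite{Marchal_MethodOfMinimization}, Chenciner \cite{Chenciner} and Ferrario--Terracini \cite{FerrarioTerracini}, and is then used as a black box to exclude collisions of the minimizers of the renormalized action. So there is no in-paper argument to compare yours against; your outline must be judged against the cited proofs, and as an outline it is faithful to them: localization by cut-and-paste, Sundman--Sperling asymptotics giving the $|t|^{2/3}$ collapse rate and the central-configuration limit shape, the constrained value function $\Phi$ through an intermediate point, and Marchal's spherical averaging showing $\Phi$ cannot attain its minimum on $\Delta$, with the caveat that $d\geq 2$ is needed.

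That said, what you have written is a road map rather than a proof: the decisive step --- that the average of $\Phi$ over a small sphere of perturbations of the collision configuration is \emph{strictly} below $\Phi(c)$ --- is asserted and motivated but not established, and this is where essentially all of the work in \cite{Chenciner} and \cite{FerrarioTerracini} lies. Concretely, one must (i) prove the $|t|^{2/3}$ estimates for a finite-action minimizer (not merely for classical solutions), (ii) compute the spherical mean of the Kepler potential, $\fint_{|\xi|=r} |x-\xi|^{-1}\,d\xi = \min\{|x|^{-1}, r^{-1}\}$, which is what makes the averaged potential term drop by an amount of order $r^{1/2}$ near the collision, and (iii) check that the kinetic surcharge of the interpolating competitors is only $O(r^2/\delta)$ after the linear cross terms cancel in the average, so that the gain dominates for small $r$. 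A secondary gap is your reduction to isolated collision instants: for a minimizer this is not free, and Ferrario--Terracini's blow-up formulation of the averaging lemma is designed precisely to work at an arbitrary collision instant without assuming isolatedness a priori. None of this makes your strategy wrong --- it is the right strategy --- but as submitted the proposal defers the theorem's actual content to the references, exactly as the paper itself does.
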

Marchal's Theorem will be fundamental in our proofs, since it will guarantee that the minimizers of the action (whose existence is the object of our proofs) are in fact true motions of the $N$-body problem free of collisions. The Principle of Least Action, jointly with Theorem \ref{thm_marchal}, has been widely applied in the search for collisionless periodic solutions to the N-body problem (cfr. e.g. \cite{MR1610784,FerrarioTerracini}). However, we must now build a suitable variational framework for the search of expansive solutions.

Our minimization will take place on the functional space
\begin{displaymath}
    \mathcal{D}_0^{1,2}([1,+\infty),\mathcal{X}) = \{ \varphi\in H_{loc}^1([1,+\infty),\mathcal{X})\ :\ \varphi(1)=0\ \text{and}\ \int_{1}^{+\infty} \|\dot{\varphi}(t)\|_\mathcal{M}^2\ \ud t<+\infty \},
\end{displaymath}
which is endowed with the norm
\begin{displaymath}
    \|\varphi\|_{\mathcal{D}} = \bigg( \int_{1}^{+\infty} \|\dot{\varphi}(t)\|_\mathcal{M}^2\ \ud t \bigg)^{1/2}.
\end{displaymath}
\begin{rem}
Given a configuration $\varphi=(\varphi_1,...,\varphi_n)\in\mathcal{D}_0^{1,2}([1,+\infty),\mathcal{X})$, we will say that its components belong to the space $\mathcal{D}_0^{1,2}([1,+\infty),\mathbb{R}^{d})$ and the $\mathcal{D}_0^{1,2}$-norm of each component is 
\begin{displaymath}
     \|\varphi_i\|_\mathcal{D} = \bigg( \int_{1}^{+\infty} |\dot{\varphi}_i(t)|^2\ \ud t \bigg)^{1/2},
\end{displaymath}
for $i=1,...,N$. We will write $\mathcal{D}_0^{1,2}(1,+\infty)$ to denote both the spaces $\mathcal{D}_0^{1,2}([1,+\infty),\mathcal{X})$ and $\mathcal{D}_0^{1,2}([1,+\infty),\mathbb{R}^{d})$, since it will be trivial to distinguish them. 
\end{rem}

\begin{prop}[Cfr. Boscaggin-Dambrosio-Feltrin-Terracini, 2021 \cite{BDFT}]
    The space $\mathcal{D}_0^{1,2}(1,+\infty)$ is a Hilbert space containing the set $C_c^\infty(1,+\infty)$ as a dense subspace.
\end{prop}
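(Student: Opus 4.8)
The plan is to realize $\mathcal{D}_0^{1,2}(1,+\infty)$ as an isometric copy of the Hilbert space $L^2\big((1,+\infty),\mathcal{X}\big)$ (equipped with the mass inner product), which yields at once the Hilbert structure and a workable reformulation of the density claim. First I would recall that functions in $H^1_{loc}([1,+\infty),\mathcal{X})$ admit continuous representatives, so that the pointwise condition $\varphi(1)=0$ is meaningful and the fundamental theorem of calculus holds: $\varphi(t)=\int_1^t\dot\varphi(s)\,\ud s$ for every $\varphi\in\mathcal{D}_0^{1,2}(1,+\infty)$. Then I would introduce the linear map $T\varphi=\dot\varphi$ from $\mathcal{D}_0^{1,2}(1,+\infty)$ into $L^2\big((1,+\infty),\mathcal{X}\big)$. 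It is isometric by the very definition of $\|\cdot\|_{\mathcal{D}}$, and injective because $\dot\varphi=0$ together with $\varphi(1)=0$ forces $\varphi\equiv0$. For surjectivity, given $g\in L^2$ I would set $\varphi(t)=\int_1^t g(s)\,\ud s$; the Cauchy--Schwarz inequality gives $\|\varphi(t)-\varphi(\tau)\|_{\mathcal{M}}\le|t-\tau|^{1/2}\|g\|_{L^2}$, so $\varphi$ is locally absolutely continuous with $\dot\varphi=g$ a.e., $\varphi(1)=0$, and $\int_1^{+\infty}\|\dot\varphi\|_{\mathcal{M}}^2\,\ud t=\|g\|_{L^2}^2<+\infty$, i.e. $\varphi\in\mathcal{D}_0^{1,2}(1,+\infty)$. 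Hence $T$ is an isometric isomorphism onto a Hilbert space, so $\mathcal{D}_0^{1,2}(1,+\infty)$ is itself a Hilbert space, with inner product $\langle\varphi,\psi\rangle_{\mathcal{D}}=\int_1^{+\infty}\langle\dot\varphi,\dot\psi\rangle_{\mathcal{M}}\,\ud t$ (finite by Cauchy--Schwarz).

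For the density of $C_c^\infty(1,+\infty)$ I would transport the question through $T$. One checks immediately that $C_c^\infty(1,+\infty)\subset\mathcal{D}_0^{1,2}(1,+\infty)$ and that $T$ maps it \emph{onto} the set $E:=\{g\in C_c^\infty(1,+\infty):\int_1^{+\infty}g\,\ud t=0\}$: if $\psi\in C_c^\infty(1,+\infty)$ then $\dot\psi\in C_c^\infty$ with $\int\dot\psi=0$, and conversely the primitive of a zero-mean test function supported in $(1,+\infty)$ is again a test function supported in $(1,+\infty)$. Since $T$ is an isometry, it thus suffices to prove that $E$ is dense in $L^2\big((1,+\infty),\mathcal{X}\big)$, and working componentwise this reduces to the scalar statement that $E$ is dense in $L^2(1,+\infty)$.

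The only non-routine point is exactly here: derivatives of test functions on the open half-line are constrained to have zero integral, so one cannot simply invoke the density of $C_c^\infty(1,+\infty)$ in $L^2(1,+\infty)$. To deal with this, given $f\in L^2(1,+\infty)$ and $\varepsilon>0$ I would first choose $g\in C_c^\infty(1,+\infty)$ with, say, $\operatorname{supp}g\subset[1,M]$ and $\|f-g\|_{L^2}<\varepsilon$ (standard density of test functions), and set $c=\int_1^{+\infty}g\,\ud t$. I then absorb the defect $c$ with a far-away, low, wide bump: fixing $\phi\in C_c^\infty\big((0,1)\big)$ with $\int\phi=1$, put $b_L(s)=\tfrac{c}{L}\,\phi\!\big(\tfrac{s-(M+1)}{L}\big)$, so that $\operatorname{supp}b_L\subset(M+1,M+1+L)$, $\int b_L=c$, and $\|b_L\|_{L^2}^2=\tfrac{c^2}{L}\|\phi\|_{L^2}^2$; taking $L$ large makes $\|b_L\|_{L^2}<\varepsilon$. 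Then $g-b_L\in E$ and $\|f-(g-b_L)\|_{L^2}<2\varepsilon$, which proves the density of $E$ and hence of $C_c^\infty(1,+\infty)$ in $\mathcal{D}_0^{1,2}(1,+\infty)$. It is worth remarking that this last step genuinely exploits the freedom to push mass to infinity: elements of $\mathcal{D}_0^{1,2}(1,+\infty)$ need not be bounded (for instance $t\mapsto\log t$ lies in it), so the approximation by compactly supported functions is slightly less transparent than in the bounded setting.
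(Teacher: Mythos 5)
Your proof is correct and complete. Note that the paper itself does not prove this proposition; it is quoted from \cite{BDFT}, so there is no in-text argument to compare against. Your route --- realizing $\mathcal{D}_0^{1,2}(1,+\infty)$ as an isometric copy of $L^2((1,+\infty),\mathcal{X})$ via $\varphi\mapsto\dot\varphi$ --- settles completeness immediately, and you correctly identify and resolve the one genuinely delicate point: under this isometry $C_c^\infty(1,+\infty)$ corresponds not to all of $C_c^\infty$ but to the zero-mean test functions, and the defect $c=\int g$ is absorbed by a bump of height $c/L$ spread over a window of length $L$, whose $L^2$ norm is $O(L^{-1/2})$. All the supporting details check out: the primitive of a zero-mean test function supported away from $t=1$ is again compactly supported in $(1,+\infty)$, and injectivity of $T$ (hence definiteness of $\|\cdot\|_{\mathcal D}$) follows from the constraint $\varphi(1)=0$. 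For comparison, the more common route in the literature works directly in $\mathcal{D}_0^{1,2}$: one first truncates $\varphi$ with a cutoff $\eta_n$ equal to $1$ on $[1,n]$ and supported in $[1,2n]$ with $|\dot\eta_n|\le C/n$, controlling the error term $\varphi\,\dot\eta_n$ in $L^2$ by the Hardy inequality of Proposition \ref{dis_hardy}, and then mollifies; that version has the advantage of generalizing to settings where no explicit isometry with $L^2$ is available, whereas yours is more elementary and self-contained here. Either way, the statement is established.
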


We recall here the following paramount Hardy-type inequality, which will be used several times in the paper. It states that the space $\mathcal{D}_0^{1,2}(1,+\infty)$ is continuously embedded in a weighted $L^2$-space with measure $\ud t/t^2$.
\begin{prop}[\textit{Hardy inequality}, Cfr. Boscaggin-Dambrosio-Feltrin-Terracini, 2021 \cite{BDFT}]\label{dis_hardy}
    For every $\varphi\in\mathcal{D}_0^{1,2}(1,+\infty)$, it holds that 
    \begin{equation}
        \int_{1}^{+\infty} \frac{\|\varphi(t)\|_\mathcal{M}^2}{t^2}\ \ud t \leq 4 \int_{1}^{+\infty}\|\dot{\varphi}(t)\|_\mathcal{M}^2\ \ud t,
    \end{equation}
    and moreover
      \begin{equation}\label{dis_space_D012}
        \sup_{t\in[1,+\infty)}\frac{\|\varphi(t)\|_\mathcal{M}^2}{t-1} \leq  \int_{1}^{+\infty}\|\dot{\varphi}(t)\|_\mathcal{M}^2\ \ud t.
    \end{equation}
\end{prop}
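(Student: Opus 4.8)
The whole statement reduces to one-dimensional estimates for the scalar function $t\mapsto\|\varphi(t)\|_\mathcal{M}^2$, and the plan is: first derive the pointwise bound \eqref{dis_space_D012} directly from the Cauchy--Schwarz inequality, then obtain the integral Hardy inequality by an integration by parts followed by Cauchy--Schwarz and a self-improving (absorption) argument. The continuous embedding into $L^2([1,+\infty),\ud t/t^2)$ is then just a restatement of the first inequality, the constant $4$ being independent of $\varphi$.

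For \eqref{dis_space_D012}, I would use that $\varphi(1)=0$ and $\varphi\in H_{loc}^1$, so $\varphi(t)=\int_1^t\dot\varphi(s)\,\ud s$ for every $t\geq1$; the triangle inequality for $\|\cdot\|_\mathcal{M}$ and then Cauchy--Schwarz on $[1,t]$ with unit weight yield
\[
\|\varphi(t)\|_\mathcal{M}\leq\int_1^t\|\dot\varphi(s)\|_\mathcal{M}\,\ud s\leq\sqrt{t-1}\left(\int_1^t\|\dot\varphi(s)\|_\mathcal{M}^2\,\ud s\right)^{1/2}\leq\sqrt{t-1}\,\|\varphi\|_\mathcal{D},
\]
and squaring and dividing by $t-1$ gives the claim. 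A useful by-product is that $\varphi$ is bounded on every compact subinterval of $[1,+\infty)$, which will be used below.

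For the integral inequality I would fix $T>1$, observe that $t\mapsto\|\varphi(t)\|_\mathcal{M}^2=\langle\varphi(t),\varphi(t)\rangle_\mathcal{M}$ is absolutely continuous on $[1,T]$ with a.e. derivative $2\langle\varphi(t),\dot\varphi(t)\rangle_\mathcal{M}$, and integrate by parts using $1/t^2=\frac{\ud}{\ud t}(-1/t)$. The boundary term at $t=1$ vanishes because $\varphi(1)=0$, while the one at $t=T$ equals $-\|\varphi(T)\|_\mathcal{M}^2/T\leq0$ and can be dropped; Cauchy--Schwarz, first in $\mathcal{X}$ and then on $[1,T]$, then gives
\[
\int_1^T\frac{\|\varphi(t)\|_\mathcal{M}^2}{t^2}\,\ud t\;\leq\;2\left(\int_1^T\frac{\|\varphi(t)\|_\mathcal{M}^2}{t^2}\,\ud t\right)^{1/2}\left(\int_1^T\|\dot\varphi(t)\|_\mathcal{M}^2\,\ud t\right)^{1/2}.
\]
Since the first factor on the right is finite (by boundedness of $\varphi$ on $[1,T]$), I may divide by it and conclude $\int_1^T\|\varphi\|_\mathcal{M}^2/t^2\,\ud t\leq4\int_1^T\|\dot\varphi\|_\mathcal{M}^2\,\ud t\leq4\|\varphi\|_\mathcal{D}^2$; letting $T\to+\infty$ and using monotone convergence yields the inequality.

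The main obstacle — really the only point needing care — is the rigorous justification of the product/chain rule and the integration by parts for the Sobolev function $t\mapsto\|\varphi(t)\|_\mathcal{M}^2$, together with the behaviour at $+\infty$. I would handle it either by exploiting the favourable sign of the boundary term $-\|\varphi(T)\|_\mathcal{M}^2/T$ and passing to the limit $T\to+\infty$ as above, or by the density of $C_c^\infty(1,+\infty)$ in $\mathcal{D}_0^{1,2}(1,+\infty)$: prove both inequalities for $\varphi\in C_c^\infty$, where all boundary contributions are absent and the computation is classical, and then pass to the general case, noting that the already-proven estimate applied to differences $\varphi_n-\varphi_m$ shows that a $\mathcal{D}_0^{1,2}$-convergent sequence converges also in $L^2([1,+\infty),\ud t/t^2)$, so the inequality passes to the limit. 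A fully equivalent route is the scalar Hardy inequality for $g\in\mathcal{D}_0^{1,2}([1,+\infty),\R)$ applied to $g(t)=\|\varphi(t)\|_\mathcal{M}$, using $|g'(t)|\leq\|\dot\varphi(t)\|_\mathcal{M}$ a.e., but the vector-valued computation above avoids any discussion of where the norm fails to be differentiable.
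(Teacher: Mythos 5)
Your proof is correct. Note, however, that the paper itself does not prove this proposition: it is stated with a citation to Boscaggin--Dambrosio--Feltrin--Terracini \cite{BDFT}, so there is no in-paper argument to compare against. Your argument is the standard and complete one: the pointwise bound follows from $\varphi(t)=\int_1^t\dot\varphi(s)\,\ud s$ and Cauchy--Schwarz, and the weighted $L^2$ estimate follows from integrating $\|\varphi(t)\|_\mathcal{M}^2/t^2$ by parts on $[1,T]$, discarding the nonpositive boundary term $-\|\varphi(T)\|_\mathcal{M}^2/T$, applying Cauchy--Schwarz, and absorbing; the a priori finiteness of $\int_1^T\|\varphi(t)\|_\mathcal{M}^2 t^{-2}\,\ud t$ needed for the absorption is indeed supplied by the pointwise bound you prove first (and if that integral vanishes the inequality is trivial, so division is legitimate). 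The chain rule for $t\mapsto\langle\varphi(t),\varphi(t)\rangle_\mathcal{M}$ with $\varphi\in H^1_{loc}$ is standard, and your fallback via density of $C_c^\infty(1,+\infty)$ (which the paper records as a separate proposition) is an equally valid route. The constant $4$ and the constant $1$ in the sup bound both come out as stated.
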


In order to prove the existence of minima for the functional $\mathcal{A}$ on $\mathcal{D}_0^{1,2}(1,+\infty)$, we will properly renormalize the Lagrangian action and, after proving its coercivity and weak lower semicontinuity, we will apply the Direct Method of the Calculus of Variations. In particular, we will use the following renormalization.
\begin{defn}[Renormalized Lagrangian action]\label{def:renormalized_action}
    Given a motion $x(t)\in\Omega$ of the form $x(t) = \varphi(t) + r_0(t) + \Tilde{x}^0$, where $\varphi\in\mathcal{D}_0^{1,2}(1,+\infty)$, $r_0(t) = at + \beta bt^{2/3}$ for proper $a,\beta b\in\mathcal{X}$, and $\Tilde{x}^0\in\mathcal{X}$, then we can define the renormalized Lagrangian action
    \begin{equation}\label{eq:renorm_action}
        \mathcal{A}^{ren}(\varphi) = \int_{1}^{+\infty} \frac{1}{2}\|\dot{\varphi}(t)\|_\mathcal{M}^2 + U(\varphi(t) + r_0(t) + \tilde{x}^0) - U(r_0(t)) - \langle \mathcal{M}\ddot{r}_0(t),\varphi(t)\rangle\ \ud t.
    \end{equation}
\end{defn}

In order to shorten the notation, throughout the paper we will usually write $\mathcal{A}$ instead of $\mathcal{A}^{ren}$. 

To describe the asymptotic expansion of our motions, we will use the following theorem and lemma. The former, applies to the cases of hyperbolic and hyperbolic-parabolic motions, while the latter, which is typically known as \textit{Chazy's Lemma}, states that the set of initial conditions in the phase space that generate hyperbolic motions is an open set and that the map defined on this set that gives the asymptotic velocity in the future is continuous. 

\begin{thm}[Chazy, 1922  \cite{Chazy}]\label{art2_thm_chazy}
Let $x(t)$ be a motion with energy constant $h > 0$ and defined for all $t > t_0$. 
\begin{enumerate}
    \item [(i)] The limit
        \begin{displaymath}
            \lim_{t \rightarrow +\infty} R(t)r(t)^{-1} = L \in [1,+\infty]
        \end{displaymath}
    always exists.
    \item [(ii)] If $L < +\infty$, then there are a configuration $a \in \Omega$ and some function $P$, which is analytic in a neighbrhood of $(0,0)$, such that for every $t$ large enough, we have
    \begin{displaymath}
        x(t) = at - \log (t) \nabla U(a) + P(u,v),
    \end{displaymath}
    where $u = 1/t$ and $v = \log (t) /t$.
\end{enumerate}
\end{thm}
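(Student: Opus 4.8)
Theorem~\ref{art2_thm_chazy} is a classical result of Chazy which we shall use below without proof; for orientation, let me sketch the line of argument, singling out the step that is genuinely substantial. The plan for~(i) is to combine the Lagrange--Jacobi identity with the Marchal--Saari dichotomy, and for~(ii) to analyse the asymptotics of Newton's equations near the linear motion $t\mapsto at$; the hard part, as in Chazy's original proof, is the convergence of the resulting formal expansion.

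For~(i), I would first record that $I(t):=\|x(t)\|_\mathcal{M}^2$ obeys the Lagrange--Jacobi identity $\ddot I=4h+2U\ge 4h>0$, so $I(t)\ge 2ht^2+O(t)$; since $I\approx R^2$ with constants depending only on the masses, this gives $R(t)\ge c\,t$ for $t$ large, and in particular a purely parabolic behaviour $x(t)=O(t^{2/3})$ is impossible. Theorem~\ref{art2_thm_MS} then leaves two cases. If $R(t)/t\to+\infty$ and $r(t)\to 0$, then $R/r\to+\infty$ and $L=+\infty$. Otherwise $x(t)=at+O(t^{2/3})$ for some $a\in\mathcal{X}$, and $R(t)\ge c\,t$ forces $a\neq 0$; from $r_i(t)-r_j(t)=(a_i-a_j)t+O(t^{2/3})$ one gets $R(t)/t\to\max_{i<j}|a_i-a_j|>0$, and: if $a\in\Omega$, then also $r(t)/t\to\min_{i<j}|a_i-a_j|>0$, so $R(t)/r(t)$ converges to the finite ratio $\max_{i<j}|a_i-a_j|/\min_{i<j}|a_i-a_j|$; if $a\in\Delta\setminus\{0\}$, any pair $i<j$ with $a_i=a_j$ gives $r(t)=O(t^{2/3})=o(t)$, hence $R(t)/r(t)\to+\infty$. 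In every case the limit $L$ exists, and it is finite precisely in the last sub-case, i.e.\ when $x(t)=at+O(t^{2/3})$ with $a\in\Omega$.

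For~(ii), the finiteness of $L$ thus singles out $x(t)=at+O(t^{2/3})$ with $a\in\Omega$, so $U$ is real-analytic near $a$ and all mutual distances stay of order $t$ along the motion. Writing $x(t)=at+w(t)$, with $a=\lim_{t\to\infty}\dot x(t)$ the asymptotic velocity (so $\dot w(t)\to 0$, which follows since $\mathcal{M}\ddot x=\nabla U(x)=O(t^{-2})$ is integrable at infinity) and $w(t)=O(t^{2/3})$, the homogeneity of degree $-2$ of $\nabla U$ turns Newton's equation into $\mathcal{M}\ddot w(t)=t^{-2}\nabla U\bigl(a+w(t)/t\bigr)$. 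Since $w/t\to 0$, Taylor-expanding $\nabla U$ about $a$ gives $\mathcal{M}\ddot w=t^{-2}\nabla U(a)+O\bigl(t^{-7/3}\bigr)$; integrating twice, and using $\dot w\to 0$ to discard the would-be linear term, produces the leading logarithm $-\log(t)\nabla U(a)$ of the statement, up to an additive constant and decaying corrections. Feeding this back and iterating, one checks that all the successive corrections can be written through the monomials $u^jv^k$ in $u=1/t$ and $v=\log(t)/t$ — for instance $t^{-2}\log t=uv$ — so that formally $x(t)=at-\log(t)\nabla U(a)+P(u,v)$ with $P$ a power series in $(u,v)$.

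The main obstacle — the one genuinely substantive point — is to prove that this formal series converges, i.e.\ that $P$ is analytic in a neighbourhood of $(0,0)$. I would handle this by a majorant argument: the coefficients of $P$ are generated by a recursion fed by the Taylor coefficients of $\nabla U$ at $a$, and one dominates them by those of an explicitly convergent comparison series obtained from Cauchy estimates for $\nabla U$ near $a$. It is precisely this step, and not the case analysis in~(i), that is the heart of the theorem and the reason it does not follow softly from the Pollard and Marchal--Saari statements; for the full details we refer to \cite{Chazy}.
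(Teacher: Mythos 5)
The paper gives no proof of this statement --- it is imported verbatim as a classical result of Chazy \cite{Chazy} --- and your treatment (use without proof, plus an orientation sketch that defers the convergence of the formal series to the original reference) matches the paper's exactly. For what it is worth, your sketch of part (i) is in fact a complete and correct derivation from the Lagrange--Jacobi inequality $\ddot I=4h+2U>0$ together with the Marchal--Saari dichotomy (Theorem \ref{art2_thm_MS}, which the paper likewise states without proof), and the only slip in part (ii) is cosmetic: integrating $\mathcal{M}\ddot w=t^{-2}\nabla U(a)+O(t^{-7/3})$ yields a leading term $-\log(t)\,\mathcal{M}^{-1}\nabla U(a)$ rather than $-\log(t)\,\nabla U(a)$, a mass-metric convention that the paper's own statement of the theorem also glosses over.
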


\begin{lem}[Maderna-Venturelli, 2020 \cite{MadernaVenturelli_HyperbolicMotions}]\label{art2_lem4.1}
Working on an Euclidean space $E$, which is endowed with an Euclidean norm $\|\cdot\|$, let $U:E^N\rightarrow\mathbb{R}\cup\{+\infty\}$ be a homogeneous potential of degree -1 of class $C^2$ on the open set $\Omega = \{ x\in E^N\ |\ U(x)<+\infty \}$. Let $x:[0,+\infty)\rightarrow\Omega$ be a given solution of $\ddot{x}=\nabla U(x)$ satisfying $x(t)=at+o(t)$ as $t\rightarrow+\infty$ with $a\in\Omega$. Then we have the following:
\begin{enumerate}
    \item The solution $x$ has asymptotic velocity $a$, meaning that
    \begin{displaymath}
         \lim_{t\rightarrow+\infty} \dot{x}(t)=a.
    \end{displaymath}
    \item (Chazy's continuity of the limit shape). Given $\varepsilon >0$, there are constants $t_1 > 0$ and $\delta > 0$ such that, for any maximal solution $y:[0,T)\rightarrow \Omega$ satisfying $\|y(0)-x(0)\| < \delta$ and $\|\dot{y}(0)-\dot{x}(0)\|<\delta$, we have
    \begin{itemize}
        \item $T=+\infty$, $\|y(t)-at\|<t\varepsilon$ for all $t>t_1$;
        \item there is $b\in\Omega$ with $\|b-a\| < \varepsilon$ for which $y(t)=bt+o(t).$
    \end{itemize}
\end{enumerate}
\end{lem}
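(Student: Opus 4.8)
The plan is to establish the two assertions separately. Claim~1 follows from a double-integration argument that exploits the homogeneity of $U$: homogeneity of degree $-1$ makes $\nabla U$ homogeneous of degree $-2$, which renders $\ddot x$ integrable on a half-line and forces $\dot x$ to converge. Claim~2 I would obtain by combining continuous dependence of the flow of $\ddot y=\nabla U(y)$ on initial data over a \emph{large but finite} time interval $[0,t_1]$ with a trapping (bootstrap) estimate that propagates the smallness of $\|y(t)/t-a\|$ to all later times, and then applying the mechanism of Claim~1 to the perturbed solution $y$.

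\emph{Claim 1.} Since $\nabla U$ is homogeneous of degree $-2$, one has $\nabla U(x(t))=t^{-2}\,\nabla U(x(t)/t)$. Because $x(t)/t\to a\in\Omega$ and $\Omega$ is open, for $t$ large $x(t)/t$ lies in a fixed compact neighbourhood $\overline B(a,\rho)\subset\Omega$, on which $\nabla U$ is bounded, say by $M$; hence $\|\ddot x(t)\|=\|\nabla U(x(t))\|\le M t^{-2}$ for $t\ge t_0$, which is integrable on $[t_0,+\infty)$. Therefore $\dot x(t)$ converges to some $v_\infty$ as $t\to+\infty$, and averaging $\dot x$ on $[t_0,t]$ gives $x(t)/t\to v_\infty$, so $v_\infty=a$ and $\dot x(t)\to a$. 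As a by-product, using $\nabla U(x(t)/t)=\nabla U(a)+o(1)$ one gets $\dot x(t)=a-t^{-1}\nabla U(a)+o(t^{-1})$ and hence $x(t)=at-\log(t)\,\nabla U(a)+o(\log t)$, consistent with Chazy's expansion in Theorem~\ref{art2_thm_chazy}.

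\emph{Claim 2.} Fix $\varepsilon>0$ small enough that $\overline B(a,\varepsilon)$ is a compact subset of $\Omega$, and put $M:=\sup_{\overline B(a,\varepsilon)}\|\nabla U\|<+\infty$. Using Claim~1 applied to $x$, choose $t_1$ so large that $\|x(t_1)/t_1-a\|<\eta$ and $\|\dot x(t_1)-a\|<\eta$, where $\eta=\eta(\varepsilon)\le1$ is an auxiliary smallness parameter. By continuous dependence of the flow of $\ddot y=\nabla U(y)$ on initial data over $[0,t_1]$, there is $\delta>0$ such that $\|y(0)-x(0)\|<\delta$ and $\|\dot y(0)-\dot x(0)\|<\delta$ force $y$ to exist on $[0,t_1]$ with $\|y(t_1)/t_1-a\|<2\eta$ and $\|\dot y(t_1)-a\|<2\eta$. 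Now run the bootstrap: let $I\subset[t_1,T)$ be the set of $\tau$ with $\|y(t)/t-a\|\le\varepsilon$ for all $t\in[t_1,\tau]$; on $I$ one has $\|\nabla U(y(t))\|=t^{-2}\|\nabla U(y(t)/t)\|\le M t^{-2}$, hence $\|\dot y(t)-a\|\le 2\eta+M/t_1$ and $\|y(t)/t-a\|\le 4\eta+M/t_1$ for $t\in I$. Choosing $\eta$ and then $t_1$ so that $4\eta+M/t_1<\varepsilon$ makes these bounds strict, so a standard continuity argument yields $I=[t_1,T)$ and $\|y(t)-at\|<\varepsilon t$ throughout. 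On $[t_1,T)$ the configuration $y(t)/t$ stays in the compact set $\overline B(a,\varepsilon)\subset\Omega$, so $U(y(t))=t^{-1}U(y(t)/t)$ stays bounded and $\|y(t)\|\le(\|a\|+\varepsilon)t$ stays bounded on bounded intervals; hence $(y,\dot y)$ stays in a compact subset of $\Omega\times E^N$ and the maximal solution cannot terminate, i.e. $T=+\infty$. Finally $\int_{t_1}^{+\infty}\|\nabla U(y(s))\|\,\ud s\le\int_{t_1}^{+\infty}M s^{-2}\,\ud s<+\infty$, so $\dot y(t)=\dot y(t_1)+\int_{t_1}^{t}\nabla U(y(s))\,\ud s$ converges to some $b$ with $\|b-a\|\le 2\eta+M/t_1<\varepsilon$; averaging $\dot y$ gives $y(t)/t\to b$, so $b\in\overline B(a,\varepsilon)\subset\Omega$ and $y(t)=bt+o(t)$, as claimed.

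The main obstacle is getting the order of the quantifiers right in the bootstrap. One must fix $\varepsilon$ first, so that $\overline B(a,\varepsilon)$ is compactly contained in $\Omega$ and $M$ is a genuine constant; then the auxiliary parameter $\eta$ and the threshold $t_1$ must be chosen so that $4\eta+M/t_1<\varepsilon$ — this single inequality has to both close the bootstrap and deliver $\|b-a\|<\varepsilon$ — and only after $t_1$ is fixed does continuous dependence on $[0,t_1]$ produce $\delta$. A secondary subtlety is upgrading ``$y(t)/t$ stays near $a$'' to the genuine asymptotics ``$y(t)=bt+o(t)$ with $b\in\Omega$'', which is precisely where the integrability of $\nabla U(y(t))$ along the trajectory — again a consequence of the $(-1)$-homogeneity — is invoked a second time.
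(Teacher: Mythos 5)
The paper does not prove Lemma~\ref{art2_lem4.1}: it is imported verbatim from Maderna--Venturelli \cite{MadernaVenturelli_HyperbolicMotions} and used as a black box, so there is no internal proof to compare yours against. On its own merits, your argument is correct and complete. The two ingredients you isolate are exactly the right ones: the $(-1)$-homogeneity of $U$ gives $\nabla U(x(t))=t^{-2}\nabla U(x(t)/t)$, and once $x(t)/t$ is confined to a compact subset of the cone $\Omega$ this makes the force integrable on a half-line, which yields both the convergence of $\dot x$ in Claim~1 and the existence of the limit $b$ for the perturbed solution in Claim~2; the passage from the unperturbed to the perturbed trajectory via finite-time continuous dependence on $[0,t_1]$ followed by a strict-inequality bootstrap on $[t_1,\infty)$ is the standard (and essentially the original) mechanism. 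You also handle the two points that are easiest to get wrong: the order of quantifiers ($\varepsilon$, then $M$, then $\eta$ and $t_1$ closing $4\eta+M/t_1<\varepsilon$, and only then $\delta$), and the non-termination of the maximal solution, which you correctly derive from $(y,\dot y)$ remaining in a compact subset of $\Omega\times E^N$ on bounded intervals rather than from mere boundedness of $y$. The only cosmetic remark is that the reduction to small $\varepsilon$ (so that $\overline B(a,\varepsilon)\subset\Omega$) deserves the one-line justification that the conclusion for a smaller $\varepsilon$ implies it for a larger one; your asymptotic refinement $x(t)=at-\log(t)\nabla U(a)+o(\log t)$ is a pleasant by-product consistent with Theorem~\ref{art2_thm_chazy}, though not needed for the statement.
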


\section{Existence of minimal half hyperbolic motions}

This section is devoted to the proof of Theorem \ref{thm_hyperbolic}. The class of hyperbolic motions has the following equivalent definition, also due to Chazy (see \cite{Chazy}).
\begin{defn}\label{def_hyperbolic}
    Hyperbolic motions are those motions such that each body has a different limit velocity vector, that is, $\dot{r}_i(t) \rightarrow a_i \in \R^d$, as $t \rightarrow +\infty$, and $a_i \neq a_j$ whenever $i \neq j$.
\end{defn}

We consider the differential system
\begin{equation}\label{newton_eq}
    \begin{cases}
    \mathcal{M}\ddot{x}=\nabla U(x)\\
    x(1)=x^0\\
    \lim_{t\rightarrow+\infty}\dot{x}(t)=a
    \end{cases},
\end{equation}
where $x^0\in\mathcal{X}$ and $a\in\Omega$. 

To prove the existence of hyperbolic motions to Newton's equations \eqref{newton_eq}, we will look for solutions having the form $x(t)=\varphi(t)+at+x^0-a$, where $\varphi:[1,+\infty)\rightarrow\mathcal{X}$ belongs to the space $\mathcal{D}_0^{1,2}(1,+\infty)$. We can thus equivalently study the system
\begin{equation}\label{newton_eq_phi}
    \begin{cases}
    \mathcal{M}\ddot{\varphi}=\nabla U(\varphi + x^0 - a + at)\\
    \varphi(1)=0\\
    \lim_{t\rightarrow+\infty}\dot{\varphi}(t)=0
    \end{cases}.
\end{equation}
Taking advantage of the problem's variational structure, we would be tempted to prove the existence of hyperbolic motions through the minimization of the Lagrangian action associated to the system \eqref{newton_eq_phi}, that is, the functional
\begin{equation}\label{lag_ac_not_renorm}
    \int_{1}^{+\infty} \frac{1}{2}\|\dot{\varphi}(t)\|_\mathcal{M}^2 + U(\varphi(t) + x^0 - a +at)\ \ud t,
\end{equation}
where 
\begin{displaymath}
    U(\varphi(t) + x^0 - a +at) = \sum_{i<j} \frac{m_i m_j}{|(\varphi_i(t) + x^0_i - a_i + a_i t) - (\varphi_j(t) + x^0_j - a_j + a_j t)|}.
\end{displaymath}

In attempting to work with the action functional as above, the major problem we encounter is that $U(\varphi(t) + x^0 - a +at)$ needs not to be integrable at infinity. Indeed, when $\varphi\in C_0^\infty([1,+\infty))$, $U(\varphi(t) + x^0 - a +at)$ decays as $\frac{1}{t}$ for $t\rightarrow+\infty$. To overcome this problem, as we can add arbitrary functions to the Lagrangian without changing the associated Euler-Lagrange equations, we can renormalize the action functional in order to have a finite integral in the following way:
\begin{displaymath}
    \mathcal{A}(\varphi) = \mathcal{A}^{ren}(\varphi) = \int_{1}^{+\infty} \frac{1}{2}\|\dot{\varphi}(t)\|_\mathcal{M}^2 + U(\varphi(t) + x^0 - a +at) - U(at)\ \ud t.
\end{displaymath}

\subsection{Coercivity}

In order to apply the Direct Method of the Calculus of Variations, we start by proving the coercivity of the functional, that is to say, that $\mathcal{A}(\varphi) \rightarrow+\infty$ as $\|\varphi\|_\mathcal{D}\rightarrow+\infty$. From now on, we will use the notations $\varphi_{ij}=\varphi_i - \varphi_j$, $x^0_{ij}=x^0_i - x^0_j$ and $a_{ij}=a_i-a_j$. We observe that the action can be equivalently written as
\begin{displaymath}
    \mathcal{A}(\varphi) = \int_{1}^{+\infty} \frac{1}{2}\sum_{i=1}^{N} m_i|\dot{\varphi}_i(t)|^2 + U(\varphi(t) + x^0 - a +at) - U(at)\ \ud t, 
\end{displaymath}
where
\begin{displaymath}\begin{split}
    U(\varphi(t) + x^0 - a +at) - U(at) &= \sum_{i<j} \bigg(\frac{m_i m_j}{|(\varphi_i(t) + x^0_i - a_i + a_i t) - (\varphi_j(t) + x^0_j - a_j + a_j t)|} - \frac{m_i m_j}{|a_i - a_j|t}\bigg) \\
    & = \sum_{i<j} \bigg(\frac{m_i m_j}{|\varphi_{ij}(t) + x^0_{ij} - a_{ij} + a_{ij} t|} - \frac{m_i m_j}{|a_{ij}|t}\bigg).
\end{split}\end{displaymath}
Since we are working in the space of configurations whose center of mass is null at every time, it can easily be proved that 
\begin{equation}\label{alternative_proof_kinetic_term}
     \sum_{i=1}^{N} m_i |\dot{\varphi}_i(t)|^2 = \frac{1}{M}\sum_{i<j} m_i m_j |\dot{\varphi}_i(t) - \dot{\varphi}_j(t)|^2,
\end{equation}
where $M=\sum_{i=1}^{N}m_i$. Indeed, we have
\begin{displaymath}\begin{split}
     \sum_{i<j} m_i m_j |\dot{\varphi}_i(t) - \dot{\varphi}_j(t)|^2 &= \frac{1}{2} \sum_{i,j} m_i m_j (|\dot{\varphi}_i(t)|^2 +|\dot{\varphi}_j(t)|^2 - 2\langle\dot{\varphi}_i(t),\dot{\varphi}_j(t)\rangle)\\
     & = \frac{1}{2}\bigg( M\sum_{i=1}^{N}m_i |\dot{\varphi}_i(t)|^2 + M\sum_{j=1}^{N}m_j |\dot{\varphi}_j(t)|^2 - 2 \langle\sum_{i=1}^{N}m_i \dot{\varphi}_i(t) , \sum_{j=1}^{N}m_j \dot{\varphi}_j(t) \rangle \bigg) \\
     & = \frac{1}{2}\bigg( M\sum_{i=1}^{N}m_i |\dot{\varphi}_i(t)|^2 + M\sum_{j=1}^{N}m_j |\dot{\varphi}_j(t)|^2 \bigg) \\
     & = M \sum_{i=1}^{N}m_i |\dot{\varphi}_i(t)|^2.
\end{split}\end{displaymath}
Using \eqref{alternative_proof_kinetic_term}, we can then write the Lagrangian action as
\begin{displaymath}
     \mathcal{A}(\varphi) = \int_{1}^{+\infty} \sum_{i<j} m_i m_j \bigg( \frac{|\dot{\varphi}_{ij}(t)|^2}{2M} + \frac{1}{|\varphi_{ij}(t) + x^0_{ij} - a_{ij} + a_{ij} t|} - \frac{1}{|a_{ij}|t} \bigg)\ \ud t.
\end{displaymath}
Since $\|\dot{\varphi}\|_{L^2}\rightarrow+\infty$ if and only if there is $i<j$ such that $\|\dot{\varphi}_i - \dot{\varphi}_j\|_{L^2}\rightarrow+\infty$, we can prove the coercivity of the action by proving the coercivity of each term $\mathcal{A}_{ij}$, where
\begin{displaymath}
     \mathcal{A}(\varphi) = \sum_{i<j} \mathcal{A}_{ij}(\varphi)
\end{displaymath}
and
\begin{displaymath}
    \mathcal{A}_{ij}(\varphi) = \int_{1}^{+\infty} m_i m_j \bigg( \frac{|\dot{\varphi}_{ij}(t) |^2}{2M} + \frac{1}{|\varphi_{ij}(t) + x^0_{ij} - a_{ij} + a_{ij} t|} - \frac{1}{|a_{ij}|t} \bigg)\ \ud t.
\end{displaymath}

Using the inequality 
\begin{equation}\label{2.3}
    |\varphi_i(t)|\leq\|\varphi_i\|_\mathcal{D}\sqrt{t},\qquad \text{for every }i=1,...,N,\ t\geq1\text{ and }\varphi_i\in\mathcal{D}_0^{1,2},
\end{equation}
which follows from \eqref{dis_space_D012}, we have
\begin{displaymath}
    U(\varphi(t) + x^0 - a +at) - U(at) \geq \sum_{i<j} \bigg(\frac{m_i m_j}{\|\varphi_{ij}\|_{\mathcal{D}}\sqrt{t} + |x^0_{ij} - a_{ij}| + |a_{ij}| t} - \frac{m_i m_j}{|a_{ij}|t}\bigg);
\end{displaymath}
We can then look for an upper bound for the integral
\begin{displaymath}
    \int_{1}^{+\infty} \bigg( \frac{1}{|a_{ij}|t} - \frac{1}{|a_{ij}|t + \|\varphi_{ij}\|_{\mathcal{D}}\sqrt{t} + |x^0_{ij}|} \bigg)\ \ud t.
\end{displaymath}
Using the change of variables $t=s^2$, we obtain
\begin{equation}\label{int_1_hyp}
    \frac{2}{|a_{ij}|} \int_{1}^{+\infty} \bigg( \frac{1}{s^2} - \frac{1}{s^2 + \frac{\|\varphi_{ij}\|_{\mathcal{D}}}{|a_{ij}|}s + \frac{|x^0_{ij}-a_{ij}|}{|a_{ij}|}}\bigg)s\ \ud s.
\end{equation}
Since
\begin{displaymath}
    \begin{split}
    s^2 + \frac{\|\varphi_{ij}\|_{\mathcal{D}}}{|a_{ij}|}s + \frac{|x^0_{ij}-a_{ij}|}{|a_{ij}|} & = \bigg( s + \frac{\|\varphi_{ij}\|_{\mathcal{D}}}{2|a_{ij}|} \bigg)^2 - \frac{\|\varphi_{ij}\|^2_{\mathcal{D}}}{4|a_{ij}|^2} + \frac{|x^0_{ij}-a_{ij}|}{|a_{ij}|}\\
    & =  \frac{\|\varphi_{ij}\|^2_{\mathcal{D}}}{4|a_{ij}|^2} \bigg[ \bigg( \frac{2|a_{ij}|s}{\|\varphi_{ij}\|_{\mathcal{D}}} + 1 \bigg)^2 -1 +\frac{4|x^0_{ij}-a_{ij}| |a_{ij}|}{\|\varphi_{ij}\|_{\mathcal{D}}^2} \bigg],
    \end{split}
\end{displaymath}
\eqref{int_1_hyp} is equal to
\begin{equation}\label{int_2_hyp}
    \frac{2}{|a_{ij}|}\frac{4|a_{ij}|^2}{\|\varphi_{ij}\|^2_{\mathcal{D}}} \int_{1}^{+\infty} \bigg[ \frac{1}{\bigg( \frac{2|a_{ij}|s}{\|\varphi_{ij}\|_{\mathcal{D}}} \bigg)^2} - \frac{1}{\bigg( \frac{2|a_{ij}|s}{\|\varphi_{ij}\|_{\mathcal{D}}} +1 \bigg)^2 - 1 + \frac{4|x^0_{ij}-a_{ij}| |a_{ij}|}{\|\varphi_{ij}\|^2_{\mathcal{D}}}} \bigg] s\ \ud s. 
\end{equation}
Changing variables again with $\tau = \frac{2|a_{ij}|s}{\|\varphi_{ij}\|_\mathcal{D}}$, we obtain that \eqref{int_2_hyp} is equal to
\begin{displaymath}
    \frac{2}{|a_{ij}|} \int_{\frac{2|a_{ij}|}{\|\varphi_{ij}\|_\mathcal{D}}}^{+\infty} \bigg[ \frac{1}{\tau^2} - \frac{1}{(\tau+1)^2 - 1 + \frac{4|x^0_{ij}-a_{ij}| |a_{ij}|}{\|\varphi_{ij}\|^2_\mathcal{D}}} \bigg] \tau\ \ud \tau.
\end{displaymath}
Since we are interested in large values of $\|\varphi_{ij}\|_\mathcal{D}$, we can suppose that there is some $\lambda<1$ such that $\frac{4|x^0_{ij}-a_{ij}| |a_{ij}|}{\|\varphi_{ij}\|^2_\mathcal{D}} \leq \lambda$. We then have
\begin{equation}\label{alternative_proof1}
    \frac{2}{|a_{ij}|} \int_{\frac{2|a_{ij}|}{\|\varphi_{ij}\|_\mathcal{D}}}^{+\infty} \bigg[ \frac{1}{\tau^2} - \frac{1}{(\tau+1)^2 - 1 + \frac{4|x^0_{ij}-a_{ij}| |a_{ij}|}{\|\varphi_{ij}\|^2_\mathcal{D}}} \bigg] \tau\ \ud \tau \leq \frac{2}{|a_{ij}|} \int_{\frac{2|a_{ij}|}{\|\varphi_{ij}\|_\mathcal{D}}}^{+\infty} \bigg[ \frac{1}{\tau^2} - \frac{1}{(\tau+1)^2 - 1 + \lambda} \bigg] \tau\ \ud \tau.
\end{equation}
The integrand of the last integral is a positive function. We observe that it is asymptotic to $\frac{1}{\tau}$ as $\tau\rightarrow0$ and  to $\frac{1}{\tau^2}$ as $\tau\rightarrow+\infty$. In particular, the integral exists at infinity, uniformly in $\lambda$. Taking $\|\varphi_{ij}\|_\mathcal{D}$ large enough, we can equivalently study the integral
\begin{displaymath}
    \int_{\varepsilon}^{+\infty} \bigg[ \frac{1}{\tau^2} - \frac{1}{(\tau+1)^2 - 1 + \lambda} \bigg] \tau\ \ud \tau,
\end{displaymath}
where $\varepsilon = \frac{2|a_{ij}|}{\|\varphi_{ij}\|_\mathcal{D}} < 1$. Since the integrand is asymptotic to $\frac{1}{\tau}$ as $\tau\rightarrow0$, it is equivalent to consider the sum of integrals
\begin{displaymath}
    \int_{\varepsilon}^{1} \frac{1}{\tau}\ \ud \tau + \int_{1}^{+\infty} \bigg[ \frac{1}{\tau^2} - \frac{1}{(\tau+1)^2 - 1 + \lambda} \bigg] \tau\ \ud \tau,
\end{displaymath}
where the second integral is constant (we will call it $C_1$) and does not depend on $\varepsilon$. We have
\begin{displaymath}
    \int_{\varepsilon}^{1} \frac{1}{\tau}\ \ud \tau + \int_{1}^{+\infty} \bigg[ \frac{1}{\tau^2} - \frac{1}{(\tau+1)^2 - 1 + \lambda} \bigg] \tau\ \ud \tau = \log \tau\bigg|_\varepsilon^1 + C_1 = -\log \varepsilon + C_1.
\end{displaymath}
Then, as $\|\varphi_{ij}\|_\mathcal{D}\rightarrow+\infty$, we know that the integral on the right-hand side of \eqref{alternative_proof1} behaves
like
\begin{displaymath}
    \frac{2}{|a_{ij}|}\bigg(-\log \frac{2|a_{ij}|}{\|\varphi_{ij}\|_\mathcal{D}} + C_1 \bigg) = \frac{2}{|a_{ij}|}\bigg(\log \|\varphi_{ij}\|_\mathcal{D} + C_1 - \log 2|a_{ij}| \bigg) = \frac{2}{|a_{ij}|}(\log \|\varphi_{ij}\|_\mathcal{D} + C_2 ),
\end{displaymath}
where $C_2 = C_1 -\log 2|a_{ij}|$.

We have thus proved that 
\begin{displaymath}
    \int_{1}^{+\infty} \bigg( \frac{1}{|a_{ij}|t} - \frac{1}{|a_{ij}|t + \|\varphi_{ij}\|_{\mathcal{D}}\sqrt{t} + |x^0_{ij}-a_{ij}|}\bigg)\ \ud t \leq \frac{2}{|a_{ij}|}(\log \|\varphi_{ij}\|_\mathcal{D} + C_2 ).
\end{displaymath}
This means that given $R>0$, when $\|\varphi_{ij}\|_\mathcal{D} \geq R$ for $R$ large  enough, we have
\begin{displaymath}
    \mathcal{A}_{ij}(\varphi) \geq m_i m_j \bigg[ \frac{\|\varphi_{ij}\|^2_\mathcal{D}}{2M} - \frac{2}{|a_{ij}|} (\log\|\varphi_{ij}\|_\mathcal{D} + C_2 )\bigg]
\end{displaymath}
and we can conclude that $\mathcal{A}_{ij}(\varphi) \rightarrow+\infty$ as $\|\varphi_{ij}\|_\mathcal{D}\rightarrow+\infty$.

\subsection{Weak lower semicontinuity}

Now, we prove that the functional $\mathcal{A}$ is weakly lower semicontinuous. Since the kinetic term $\frac{1}{2}\|\dot{\varphi}(t)\|_\mathcal{M}$ is convex, it is straightforward that the term $\int_{1}^{+\infty} \frac{1}{2}\|\dot{\varphi}(t)\|_\mathcal{M}^2\ \ud t$ is weakly lower semicontinuous. However it is worthwhile noticing that Fatou's Lemma cannot be applied to the term $\int_{1}^{+\infty} U(\varphi(t) + x^0 - a + at) - U(at)\ \ud t$, since the integrand is not a positive function, and we must proceed in a different way. We know that there is at least a sequence of functions in $\mathcal{D}_0^{1,2}(1,+\infty)$ that converges uniformly on the compact subsets of $[1,+\infty)$. To show this, consider a bounded sequence $(\varphi^n)_n$ in $\mathcal{D}_0^{1,2}(1,+\infty)$. We also know, by the definition of this space, that $\|\dot{\varphi}^n\|_{L^2([1,+\infty))}<+\infty$ and that $\varphi^n(1) = 0$, for every $n$. From the inequality 
\begin{equation}
    \|\varphi(t)\|_\mathcal{M}\leq \|\dot{\varphi}\|_{L^2}\sqrt{t-1}\leq\|\dot{\varphi}\|_{L^2}\sqrt{t}\qquad\text{ for every }t\geq1,
\end{equation}
we have $\|\varphi^n(t)\|_\mathcal{M} \leq \|\dot{\varphi}^n\|_{L^2}\sqrt{t}$ for every $t\geq1$ and for every $n$, which means that the $L^{\infty}$-norm in $[1,T]$ of $\varphi^n$ is bounded, for every fixed $T\geq1$ and for every $n$. On the other hand, we have
\begin{displaymath}
    \|\varphi^n(t_1)-\varphi^n(t_2)\|_\mathcal{M} \leq \|\dot{\varphi}^n\|_{L^2}\sqrt{t_1 - t_2},
\end{displaymath}
for every $t_1,t_2\in[1,+\infty)$ and for every $n$, which implies that the sequence $(\varphi^n)_n$ is equicontinuous on each interval $[1,T]$, for $T$ fixed. Then, by Ascoli-Arzelà's Theorem, we can say that for every fixed $T\geq1$ there is a subsequence $(\varphi^{n_k})_k$ that converges uniformly on $[1,T]$ (and, consequently, it converges pointwise on each compact). Besides, it can also be proved, through a diagonal procedure, that there is a subsequence converging pointwise in $[1,+\infty)$.

Consider now a sequence $(\varphi^n)_n$ in $\mathcal{D}_0^{1,2}(1,+\infty)$ converging weakly to some limit $\varphi\in\mathcal{D}_0^{1,2}(1,+\infty)$. By the properties of weak convergence we know that the sequence is bounded on $\mathcal{D}_0^{1,2}(1,+\infty)$ and, from the previous considerations, there is a subsequence $(\varphi^{n_k})_k$ converging uniformly on compact subsets of $[1,+\infty)$ (and hence pointwise in $[1,+\infty)$). We  write
\begin{equation}\label{alternative_proof_equality_with_s}
    \frac{1}{|x^0_{ij} - a_{ij} + a_{ij}t + \varphi^n_{ij}(t)|} - \frac{1}{|a_{ij}t|} = \int_{0}^{1} \frac{\ud}{\ud s}\bigg[\frac{1}{|a_{ij}t + s(x^0_{ij} - a_{ij} +\varphi^n_{ij}(t))|}\bigg]\ \ud s.
\end{equation}
However, this inequality holds only when the denominator of the integrand is not zero, which happens for $t$ sufficiently small. In particular, for all $s\in(0,1)$ we have
\begin{displaymath}\begin{split}
     |a_{ij}t + s(x^0_{ij} - a_{ij} + \varphi^n_{ij}(t))| & \geq |a_{ij}|t - s(|x^0_{ij} - a_{ij}| + \|\varphi^n_{ij}\|_\mathcal{D}\sqrt{t}) \\
     & > |a_{ij}|t - (|x^0_{ij} - a_{ij}| + \|\varphi^n_{ij}\|_\mathcal{D}\sqrt{t}),
\end{split}\end{displaymath}
and, since $|\varphi^n_{ij}(t)| \leq k\sqrt{t}$ for $k\in\mathbb{R}^+$ large  enough, we have
\begin{displaymath}
    |a_{ij}t + s(x^0_{ij} - a_{ij} + \varphi^n_{ij}(t))| > |a_{ij}|t - (|x^0_{ij} - a_{ij}| + k\sqrt{t}),
\end{displaymath}
where the last term is larger then zero if $t$ is larger than some $\bar{T}=\bar{T}(k)$; it is easy to compute $\bar{T}$ by studying the function $g(t)=|a_{ij}|t - [|x^0_{ij} - a_{ij}| + k\sqrt{t}]$. For these reasons, it is better to study the potential term separately on the two intervals $[1,\bar{T}]$ and $[\bar{T},+\infty)$.

We observe that $U(x^0 - a + at + \varphi)\in L^1([1,\bar{T}])$, since
\begin{displaymath}
     \frac{1}{|x^0_{ij} - a_{ij} + a_{ij}t + \varphi^n_{ij}(t)|} \leq \frac{1}{|x^0_{ij} - a_{ij}| - |a_{ij}|t - \|\varphi^n_{ij}\|_\mathcal{D}\sqrt{t}}.
\end{displaymath}
Besides, since $U$ is a positive function, we can use the pointwise convergence of the sequence and Fatou's Lemma to state that
\begin{displaymath}
     \int_{1}^{\bar{T}} \frac{1}{|x^0_{ij} - a_{ij} + a_{ij}t + \varphi_{ij}(t)|}\ \ud t \leq \liminf_{n\rightarrow+\infty} \int_{1}^{\bar{T}} \frac{1}{|x^0_{ij} - a_{ij} + a_{ij}t + \varphi^n_{ij}(t)|}\ \ud t.
\end{displaymath}

Now, knowing that the sequence $(\varphi^n)_n$ is bounded, we wish to prove that the term $U(\varphi^n(t) + x^0 - a + at) - U(at)$ converges in $L^1([\bar{T},+\infty))$. By using \eqref{alternative_proof_equality_with_s}, we can write
\begin{displaymath}\begin{split}
    \int_{\bar{T}}^{+\infty}& \frac{1}{|x^0_{ij} - a_{ij} + a_{ij}t + \varphi^n_{ij}(t)|} - \frac{1}{|a_{ij}t|}\ \ud t\\
    & = \int_{\bar{T}}^{+\infty} \bigg( \int_{0}^{1} -\frac{[a_{ij}t + s(x^0_{ij} - a_{ij} +\varphi^n_{ij}(t))](x^0_{ij}-a_{ij}+\varphi^n_{ij}(t))}{|a_{ij}t + s(x^0_{ij} - a_{ij} +\varphi^n_{ij}(t))|^3}\ \ud s \bigg)\ \ud t.
\end{split}\end{displaymath}
Our goal is to find an upper bound for the term
\begin{displaymath}
    \int_{\bar{T}}^{+\infty} \bigg|\frac{1}{|x^0_{ij} - a_{ij} + a_{ij}t + \varphi^n_{ij}(t)|} - \frac{1}{|a_{ij}t|}\bigg|\ \ud t.
\end{displaymath}
To find the upper bound, we will need the inequality
\begin{equation}\label{alternative_proof_inequality}
    \frac{|b+c|^2}{|b|^2 - |c|^2} \geq \frac{1}{3}, \qquad \text{for each }b,c\in\mathbb{R}^d \text{ such that }|b|\geq2|c|,
\end{equation}
which can easily be proved by elementary calculus. %Indeed:
%\begin{displaymath}
%    \frac{|b+c|^2}{|b|^2 %- |c|^2} = \frac{|b|^2 + %|c|^2 + 2\langle b,c %\rangle_{\mathbb{R}^d}}%{|b|^2 - |c|^2} \geq %\frac{|b|^2 + |c|^2 - %2|b||c|}{|b|^2 - |c|^2} %= \frac{(|b|-|c|)^2}
%{|b|^2-|c|^2} = %\frac{|b|-|c|}{|b|+|c|} %= \frac{\frac{|b|}%{|c|}-1}{\frac{|b|}%{|c|}+1} \geq \frac{1}{3},
%\end{displaymath}
%where the last inequality is easily obtained by studying the function $f(r) = \frac{r-1}{r+1}$, for $r\geq2$. 
By \eqref{alternative_proof_inequality} and using the fact that $|x^0_{ij} - a_{ij}|+\|\varphi^n_{ij}\|_\mathcal{D}\sqrt{t} \leq k'\sqrt{t}$ for $k'\in\mathbb{R}^+$ large  enough, we thus have
\begin{displaymath}\begin{split}
    \int_{\bar{T}}^{+\infty} \bigg| \int_{0}^{1} & -\frac{[a_{ij}t + s(x^0_{ij} -a_{ij} +\varphi^n_{ij}(t))](x^0_{ij}-a_{ij}+\varphi^n_{ij}(t))}{|a_{ij}t + s(x^0_{ij} -a_{ij} +\varphi^n_{ij}(t))|^3}\ \ud s \bigg|\ \ud t   \\
    & \leq \int_{\bar{T}}^{+\infty} \bigg( \int_{0}^{1} \frac{|x^0_{ij}-a_{ij}+\varphi^n_{ij}(t)|}{|a_{ij}t + s(x^0_{ij} -a_{ij} +\varphi^n_{ij}(t))|^2}\ \ud s \bigg)\ \ud t  \\
    & \leq \int_{\bar{T}}^{+\infty} \bigg( \int_{0}^{1} 3\frac{|x^0_{ij}-a_{ij}|+\|\varphi^n_{ij}\|_\mathcal{D}\sqrt{t}}{|a_{ij}t|^2 - s|x^0_{ij} - a_{ij} +\|\varphi^n_{ij}\|_\mathcal{D}\sqrt{t}|^2}\ \ud s \bigg)\ \ud t  \\
    & \leq \int_{\bar{T}}^{+\infty} \bigg( \int_{0}^{1} \frac{3k'\sqrt{t}}{|a_{ij}|^2 t^2 - sk't}\ \ud s \bigg)\ \ud t.
\end{split}\end{displaymath}
By choosing $\bar{T}(k)\gg k'/|a_{ij}|^2$ so that $|a_{ij}|^2 t > sk'$ for all $s\in(0,1)$ and for all $t\in[\bar{T},+\infty)$ (take $k$ large  enough), we have that the last integral is finite and we have thus proved that there is a $\hat{T}$ such that, for all $\bar{T}\geq\hat{T}$,
\begin{displaymath}
     \int_{\bar{T}}^{+\infty} \bigg|\frac{1}{|x^0_{ij} - a_{ij} + a_{ij}t + \varphi^n_{ij}(t)|} - \frac{1}{|a_{ij}t|}\bigg|\ \ud t < +\infty.
\end{displaymath}
From this result, the $L^1$ convergence of the term $U(\varphi^n(t) + x^0 - a + at) - U(at)$ follows: by the dominated convergence Theorem we have, in particular,
\begin{displaymath}
     \lim_{n\rightarrow+\infty} \int_{\bar{T}}^{+\infty} U(\varphi^n(t) + x^0 - a + at) - U(at)\ \ud t = \int_{\bar{T}}^{+\infty} U(\varphi(t) + x^0 - a + at) - U(at)\ \ud t.
\end{displaymath}

Thus, if we consider any sequence $(\varphi^n)_n$ in $\mathcal{D}_0^{1,2}(1,+\infty)$ converging weakly to some $\varphi\in\mathcal{D}_0^{1,2}(1,+\infty)$, we have
\begin{displaymath}
     \mathcal{A}(\varphi) \leq \liminf_{n\rightarrow+\infty} \int_{1}^{+\infty} \frac{1}{2} \|\dot{\varphi}^n(t)\|_\mathcal{M}^2 + U(\varphi^n(t) + x^0 - a + at) - U(at)\ \ud t,
\end{displaymath}
which proves the weak lower semicontinuity of the renormalized  Lagrangian action in the space $\mathcal{D}_0^{1,2}(1,+\infty)$.

\begin{rem}
    The same reasoning leads to the continuity of the renormalised action with respect to the strong topology, in all elements $\varphi$ that do not give rise to collisions.
\end{rem}

\subsection{Absence of collisions and hyperbolicity of the motion}

Now we can apply the Direct Method of the Calculus of Variations, obtaining  a minimizer $\varphi$ on $\mathcal{D}_0^{1,2}(1,+\infty)$ of the renormalized action $\mathcal{A}$ and, by Marchal's Principle applied to $x(t) = \varphi(t) + x^0 - a + at$, we have that $x(t)\in\Omega$ for all $t\in(1,+\infty)$. Indeed, in each finite time interval, the full path $x$ minimizes the Lagrangian action among all paths joining the two ends.  Being free of collisions, it solves the associated Euler-Lagrange equations.

It remains to prove that $\lim_{t\rightarrow+\infty}\dot{\varphi}(t)=0$. We already know that $\dot{\varphi}\in L^2$ and that there is some $k\in\mathbb{R}^+$ such that $ \|\varphi(t)\|_\mathcal{M} \leq k\sqrt{t}$. By this last inequality, we have that
\begin{displaymath}
     \sum_{i<j} m_i m_j \frac{1}{|a_{ij}t + x^0_{ij} - a_{ij} + \varphi_{ij}(t)|} \leq \sum_{i<j} m_i m_j \frac{1}{|a_{ij}|t - |x^0_{ij}-a_{ij}| - k\sqrt{t}}
\end{displaymath}
and since $|a_{ij}|t - |x^0_{ij}-a_{ij}| - k\sqrt{t}\rightarrow+\infty$ as $t\rightarrow+\infty$ for all $i,j=1,...,N$, we obtain that $\lim_{t\rightarrow+\infty}U(x(t))=0$. Besides, since $\int_{1}^{+\infty} |\dot{\varphi}_{ij}(t)|^2\ \ud t < +\infty$, we have that
\begin{equation}\label{alternative_proof_liminf}
    \liminf_{t\rightarrow+\infty} |\dot{\varphi}_{ij}(t)|=0.
\end{equation}
\begin{rem}\label{rem_lim_hyp}
    A solution $x(t)=\varphi(t)+at+x^0-a$ of the equation $\mathcal{M}\ddot{x}=\nabla U(x)$ has positive energy. Indeed,
    \begin{displaymath}
         \frac{1}{2}\|\dot{x}(t)\|^2_\mathcal{M} - U(x(t)) = \frac{1}{2}\sum_{i=1}^{N} m_i |\dot{\varphi}_i(t) + a_i|^2 - U(x(t)) = h,
    \end{displaymath}
    and since by \eqref{alternative_proof_liminf} there is some $t_k\rightarrow+\infty$ such that $\lim_{t_k\rightarrow+\infty} \dot{\varphi}_i(t_k) = 0$, we have $h = \frac{1}{2}\|a\|_\mathcal{M}$.
\end{rem}
By Remark \ref{rem_lim_hyp}, we can apply Chazy's Lemma (Lemma \ref{art2_lem4.1}), which implies that the limit of $\dot{x}(t)$ exists for $t\rightarrow+\infty$. Since, by \eqref{alternative_proof_liminf}, there is at least a sequence $(t_k)_k$ such that $\dot{x}(t_k)\rightarrow a$ as $t_k\rightarrow+\infty$, we can conclude that \begin{displaymath}
    \lim_{t\rightarrow+\infty}\dot{x}(t)=a.
\end{displaymath}
Besides, we can apply Chazy's Theorem (Theorem \ref{art2_thm_chazy}) to state that the minimizing motion $x$ has the asymptotic expansion
\begin{displaymath}
    x(t) = at- \log (t) \nabla U(a) + o(1)\quad\text{as }t\rightarrow+\infty.
\end{displaymath}

We have thus proved that $x$ is a solution of the system
\begin{displaymath}
     \begin{cases}
     \mathcal{M}\ddot{x} = \nabla U(x)\\
     x(1) = x^0\\
     \lim_{t\rightarrow+\infty}\dot{x}(t) = a
     \end{cases},
\end{displaymath}
which means that there is a hyperbolic motion for the $N$-body problem, starting at any initial configuration $x^0$ and having prescribed asymptotic velocity $a$ without collisions.

\section{Existence of minimal half completely parabolic motions}\label{sec_parabolic}

We now focus on the class of completely parabolic motions, that is, those motions that have the form $x(t) = at + O(t^{2/3})$ for $t\rightarrow+\infty$, with $a=0$ and $|r_i(t)-r_j(t)|\approx t^{2/3}$ for $i<j$. Equivalently, we have the following definition.
\begin{defn}
    An expansive solution $x$ of the $N$-body problem is said to be parabolic if every body approaches infinity with zero velocity.
\end{defn}

In this section we will prove Theorem \ref{thm_parabolic}. More specifically, we will prove, for the $N$-body problem, the existence of orbits having the form 
\begin{displaymath}
    x(t) = \beta b t^{2/3} + o(t^{1/3^+}), \quad \text{as }t\rightarrow+\infty,
\end{displaymath}
where $\beta\in\R$ is a proper value and $b$ is a minimal central configuration. The remainder is $o(t^{1/3^+})$ in the sense that it grows less than order $\gamma$ for every $\gamma>1/3$.
\begin{defn}
We say that $b\in\mathcal{X}$ is a central configuration if it is a critical point of $U$ when restricted to the inertial ellipsoid
\begin{displaymath}
    \mathcal{E} = \{x\in\mathcal{X}\ :\ \langle \mathcal{M} x,x\rangle=1\}.
    \end{displaymath}
A central configuration $b_m\in \mathcal{E}$ is said to be minimal if
\begin{displaymath}
    U(b_m)=\min_{b\in\mathcal{E}} U(b).
\end{displaymath}
\end{defn}
More precisely, we will work with normalized central configurations, that is, central configurations $b$ such that $\langle \mathcal{M}b,b\rangle=1$.
\begin{rem}
Obviously, as $U$ is infinite on collisions,   minimal central configuration $b_m$ are non collision, i.e. $b_m\in\Omega$. 
\end{rem}

Given a Kepler potential $U$, we observe that from the definition of central configurations, it follows
\begin{displaymath}
    \nabla U(b) = \lambda \mathcal{M}b,
\end{displaymath}
where $\lambda$ is a Lagrange multiplier. Besides, we have the equality
\begin{equation}\label{parabolic_lambda}
    \lambda = \lambda\langle \mathcal{M}b,b\rangle = \langle\nabla U(b),b\rangle = -U(b).
\end{equation}

We first recall that there are self-similar solutions to Newton's equations $\mathcal{M}\ddot{x} = \nabla U(x)$ having the form
\begin{displaymath}
    x(t) = \beta b t^{2/3},
\end{displaymath}
for a proper constant $\beta$ and a central configuration $b$. Indeed
\begin{displaymath}
    \mathcal{M}\ddot{x} = -\frac{2}{9}\mathcal{M}\beta b t^{-4/3} = \nabla U(x) = \nabla U(\beta b t^{2/3}) = \frac{1}{\beta^2}t^{-4/3}\nabla U(b) = \frac{1}{\beta^2}t^{-4/3}\lambda \mathcal{M} b
\end{displaymath}
and, by \eqref{parabolic_lambda}, we also have
\begin{displaymath}
    \beta^3 = \frac{9}{2}U(b).
\end{displaymath}
This means that for $\beta = \sqrt[3]{\frac{9}{2}U(b)}$, the orbit $x(t)=\beta b t^{2/3}$ can be a homothetic solution of Newton's equations.

Now, let us define
\begin{displaymath}
    r_0(t) = \beta b_m t^{2/3},
\end{displaymath}
where $b_m\in\Omega$ is a normalized minimal central configuration. We wish to prove the existence of solutions of the system
\begin{displaymath}
    \begin{cases}
        \mathcal{M}\ddot{x} = \nabla U(x)\\
        x(1)=x^0\\
        \lim_{t\rightarrow+\infty}\dot{x}(t)=0
    \end{cases},
\end{displaymath}
given $x^0\in\mathcal{X}$. We seek solutions having the form
\begin{equation}\label{expr_parabolic_motion}
    x(t) = r_0(t) + \varphi(t) - r_0(1) - x^0 = r_0(t) + \varphi(t) + \Tilde{x}^0,
\end{equation}
where $\varphi\in\mathcal{D}_0^{1,2}(1,+\infty)$. In this case, we have 
\begin{displaymath}
    \nabla U(x(t)) = \mathcal{M}\ddot{x}(t) = \mathcal{M}\ddot{r_0}(t) + \mathcal{M}\ddot{\varphi}(t) = \nabla U(r_0(t)) + \mathcal{M}\ddot{\varphi}(t),
\end{displaymath}
which means that
\begin{displaymath}
    \mathcal{M}\ddot{\varphi}(t) = \nabla U(r_0(t)+\varphi(t)+\Tilde{x}^0)-\nabla U(r_0(t)).
\end{displaymath}
We can thus define the renormalized Lagrangian action as
\begin{equation}\label{parabolic_action}
    \mathcal{A}(\varphi) = \int_{1}^{+\infty} \frac{1}{2}\langle \mathcal{M}\dot{\varphi}(t),\dot{\varphi}(t)\rangle + U(r_0(t)+\varphi(t)+\Tilde{x}^0) - U(r_0(t)) - \langle \nabla U(r_0(t)),\varphi(t)\rangle\ \ud t.
\end{equation}
Besides the coercivity and weak lower semicontinuity of the Lagrangian action, we have to verify that:
\begin{itemize}
    \item $\forall\ \varphi\in\mathcal{D}_0^{1,2}(1,+\infty)$ such that $r_0(t)+\varphi(t)+\Tilde{x}^0(t)\neq0$ for all $t\geq1$, $\mathcal{A}(\varphi)<+\infty$;
    \item the action is continuous and $C^1$ on $\mathcal{D}_0^{1,2}\setminus\{\varphi\in\mathcal{D}_0^{1,2}\ :\ \exists\ t\text{ such that }r_0(t)+\varphi(t)+\Tilde{x}^0(t)=0\}$.
\end{itemize}

\subsection{Coercivity}

To minimize the action on the set $\mathcal{D}_0^{1,2}(1,+\infty)$, we start by proving its coercivity. We do this by reconducting the problem to a Kepler problem, where we denote $U_{min}=\min_{b\in\mathcal{E}}U(b)$. We notice that, for any orbit $x$,
\begin{displaymath}
    U(x) \geq \frac{U_{min}}{\|x\|},
\end{displaymath}
where $\|\cdot\|$ represents the Euclidean norm on $\R^{dN}$. Indeed, because of the homogeneity of the potential,
\begin{equation}\label{parabolic_inequality(a)}
    U(x) = U\bigg(\|x\| \frac{x}{\|x\|}\bigg) = \frac{1}{\|x\|}U\bigg(\frac{x}{\|x\|}\bigg) \geq \frac{1}{\|x\|}U_{min}.
\end{equation}
Besides,
\begin{equation}\label{parabolic_equality(b)}
    \nabla U(r_0) = \nabla U(\beta b_m t^{2/3}) = \frac{1}{\beta^2 t^{4/3}}\nabla U(b_m) = \frac{1}{\beta^2 t^{4/3}}\lambda \mathcal{M} b_m = -\frac{U_{min}}{\beta^2 t^{4/3}}\mathcal{M} b_m.
\end{equation}
Using \eqref{parabolic_inequality(a)} and \eqref{parabolic_equality(b)}, we can then write
\begin{displaymath}
    \begin{split}
        \mathcal{A}(\varphi) &\geq \int_{1}^{+\infty} \frac{1}{2}\langle \mathcal{M}\dot{\varphi}(t),\dot{\varphi}(t)\rangle + \frac{U_{min}}{\|r_0(t)+\varphi(t)+\Tilde{x}^0\|}-\frac{U_{min}}{\|r_0(t)\|} + \frac{1}{\beta^2 t^{4/3}}\langle U_{min}\mathcal{M} b,\varphi(t)\rangle\ \ud t \\
        & = \int_{1}^{+\infty} \frac{1}{2}\langle \mathcal{M}\dot{\varphi}(t),\dot{\varphi}(t)\rangle + \frac{U_{min}}{\|r_0(t)+\varphi(t)+\Tilde{x}^0\|}-\frac{U_{min}}{\|r_0(t)\|} + \frac{\langle U_{min}\mathcal{M}r_0(t),\varphi(t)\rangle}{\|r_0(t)\|^3}\ \ud t.
    \end{split}
\end{displaymath}

We have
\begin{displaymath}
    \|r_0(t)+\varphi(t)+\Tilde{x}^0\|^2 = \|r_0(t)\|^2 + 2\langle \mathcal{M}r_0(t),\varphi(t)\rangle + 2\langle \mathcal{M}\varphi(t),x^0\rangle + 2\langle \mathcal{M} r_0(t),x^0\rangle + \|\varphi(t)\|^2 + \|x^0\|^2 = u + v,
\end{displaymath}
where we define
\begin{displaymath}
    \begin{split}
        & u:= \|r_0(t)\|^2\\
        & v:= 2\langle \mathcal{M}r_0(t),\varphi(t)\rangle + 2\langle \mathcal{M}\varphi(t),x^0\rangle + 2\langle \mathcal{M} r_0(t),x^0\rangle + \|\varphi(t)\|^2 + \|x^0\|^2.
    \end{split}
\end{displaymath}
\begin{rem}\label{rem_taylor}
    The following equalities hold true:
    \begin{displaymath}
        \begin{split}
            & U(b+s)-U(b) = \int_{0}^{1} \frac{\ud}{\ud t} U(b+st)\ \ud t = \int_{0}^{1} \nabla U(b+st)\ \ud t,\\
            & U(b+s)-U(b)-\nabla U(b)s = \int_{0}^{1}\int_{0}^{1} \langle\nabla^2 U(b+st_1 t_2)s,s\rangle t_2\ \ud t_1\ \ud t_2.
        \end{split}
    \end{displaymath}
\end{rem}
Using Remark \ref{rem_taylor}, we then have
\begin{displaymath}
    \|r_0(t)+\varphi(t)+\Tilde{x}^0\|^{-1} = (u+v)^{-1/2} = u^{-1/2} -\frac{1}{2}u^{-3/2}v + \frac{3}{4}\int_{0}^{1}\int_{0}^{1}\langle(u+stv)^{-5/2}v,v\rangle s\ \ud s\ \ud t.
\end{displaymath}
Since the integral in the last expression is positive, it follows
\begin{equation}\label{parabolic_inequality3}
    \begin{split}
    \|r_0(t)+\varphi(t)+\Tilde{x}^0\|^{-1}  &= (u+v)^{-1/2}\\
    &\geq u^{-1/2} -\frac{1}{2}u^{-3/2}v \\
    &= \|r_0(t)\|^{-1} -\frac{1}{2\|r_0(t)\|^3}[2\langle \mathcal{M}r_0(t),\varphi(t)\rangle + 2\langle \mathcal{M}\varphi(t),x^0\rangle + 2\langle \mathcal{M} r_0(t),x^0\rangle + \|\varphi(t)\|^2 + \|x^0\|^2]\\
    & = \|r_0(t)\|^{-1} -\frac{\langle \mathcal{M}r_0,\varphi(t)\rangle}{\|r_0(t)\|^3} - \frac{\langle \mathcal{M}\varphi(t),x^0\rangle}{\|r_0(t)\|^3} - \frac{\langle \mathcal{M} r_0(t),x^0\rangle}{\|r_0(t)\|^3} -\frac{1}{2}\frac{\|\varphi(t)\|^2}{\|r_0(t)\|^3} - \frac{1}{2}\frac{\|x^0\|^2}{\|r_0(t)\|^3}.
    \end{split}    
\end{equation}
At this point we can use \eqref{parabolic_inequality3} to obtain
\begin{displaymath}
\begin{split}
    \mathcal{A}(\varphi) &\geq \int_{1}^{+\infty} \frac{1}{2}\langle \mathcal{M}\dot{\varphi}(t),\dot{\varphi}(t)\rangle + \frac{U_{min}}{\|r_0(t)+\varphi+\Tilde{x}^0\|}-\frac{U_{min}}{\|r_0(t)\|} + \frac{\langle U_{min}\mathcal{M} r_0(t),\varphi(t)\rangle}{\|r_0(t)\|^3}\ \ud t  \\
    & \geq \int_{1}^{+\infty} \frac{1}{2}\langle \mathcal{M}\dot{\varphi}(t),\dot{\varphi}(t)\rangle - \frac{U_{min}}{2}\frac{\|\varphi(t)\|^2}{\|r_0(t)\|^3}-\frac{\langle U_{min}\mathcal{M} \varphi(t),x^0\rangle}{\|r_0(t)\|^3}\ \ud t + C_3,
    \end{split}
\end{displaymath}
where $C_3$ is a constant. By Hardy inequality \eqref{dis_hardy} and the fact that, for $\beta=\sqrt[3]{\frac{9}{2}U(b)}$,
\begin{equation}\label{parabolic_equality4}
    \frac{U_{min}}{\|r_0(t)\|^3} = \frac{U_{min}}{\|\beta b_m t^{2/3}\|^3}=\frac{U_{min}}{\beta^3 t^2\| b_m\|^3}=\frac{2}{9}\frac{1}{t^2},
\end{equation}
we have
\begin{displaymath}
    \begin{split}
    \mathcal{A}(\varphi) &\geq \int_{1}^{+\infty} \frac{1}{2}\bigg[\langle \mathcal{M}\dot{\varphi}(t),\dot{\varphi}(t)\rangle -\frac{8}{9}\langle \mathcal{M}\dot{\varphi}(t),\dot{\varphi}(t)\rangle  \bigg] - \frac{U_{min}\langle \mathcal{M}\varphi(t),x^0\rangle}{\|r_0(t)\|^3}\ \ud t\\
    &=\int_{1}^{+\infty}\frac{1}{18} \langle \mathcal{M}\dot{\varphi}(t),\dot{\varphi}(t)\rangle - \frac{U_{min}\langle \mathcal{M}\varphi(t),x^0\rangle}{\|r_0(t)\|^3}\ \ud t.
    \end{split}
\end{displaymath}
Using again \eqref{parabolic_equality4}, we observe that
\begin{displaymath}
    \frac{U_{min}\langle \mathcal{M}\varphi(t),x^0\rangle}{\|r_0(t)\|^3} = \frac{2}{9}\frac{\langle \mathcal{M}\varphi(t),x^0\rangle}{t^2}.
\end{displaymath}
By Cauchy-Scwartz and Hardy inequalities, it follows
\begin{displaymath}
    \begin{split}
    \int_{1}^{+\infty} - \frac{U_{min}\langle \mathcal{M}\varphi(t),x^0\rangle}{\|r_0(t)\|^3}\ \ud t & \geq - \int_{1}^{+\infty} \frac{2}{9}\frac{|\langle \mathcal{M}\varphi(t),x^0\rangle|}{t^2}\ \ud t \geq - \int_{1}^{+\infty} \frac{2}{9}\frac{\|\varphi(t)\|_\mathcal{M}}{t}\frac{\|x^0\|_\mathcal{M}}{t}\ \ud t \\
    & \geq -\frac{2}{9}\bigg(\int_{1}^{+\infty}\frac{\|\varphi(t)\|_\mathcal{M}^2}{t^2}\ \ud t\bigg)^{1/2}\bigg(\int_{1}^{+\infty}\frac{\|x^0\|_\mathcal{M}^2}{t^2}\ \ud t\bigg)^{1/2}\ \ud t \\
    &\geq -\frac{4}{9}C_4\|\varphi\|_\mathcal{D},
    \end{split}
\end{displaymath}
where $C_4$ is constant. This means that
\begin{displaymath}
    \mathcal{A}(\varphi) \geq  \frac{1}{18}\|\varphi\|_\mathcal{D}^2-\frac{4}{9}C_4\|\varphi\|_\mathcal{D},
\end{displaymath}
which proves the coercivity of the action.

\subsection{Weak-lower semicontinuity}

Now, we can focus on the proof of the weak lower semicontinuity of the action. Consider a sequence of functions $(\varphi^n)_n\subset\mathcal{D}_0^{1,2}(1,+\infty)$ converging weakly in $\mathcal{D}_0^{1,2}(1,+\infty)$ to some $\varphi$, for $n\rightarrow+\infty$. It trivially follows that, for every $n$, $\|\varphi\|_\mathcal{D}<+\infty$ and $\|\varphi^n\|_\mathcal{D}<+\infty$. Let us divide the action in two parts:
\begin{displaymath}
    \mathcal{A}(\varphi) = \mathcal{A}_{[1,\overline{T})}(\varphi) + \mathcal{A}_{[\overline{T},+\infty)}(\varphi),
\end{displaymath}
where
\begin{displaymath}
    \begin{split}
        \mathcal{A}_{[1,\overline{T})}(\varphi)& = \int_{1}^{\overline{T}} \frac{1}{2}\|\dot{\varphi}(t)\|_\mathcal{M}^2 + U(r_0(t)+\varphi(t)+\Tilde{x}^0) - U(r_0(t)) - \langle \nabla U(r_0(t)),\varphi(t)\rangle \ud t,\\
        \mathcal{A}_{[\overline{T},+\infty)}(\varphi) &= \int_{\overline{T}}^{+\infty} \frac{1}{2}\|\dot{\varphi}(t)\|_\mathcal{M}^2 + U(r_0(t)+\varphi(t)+\Tilde{x}^0) - U(r_0(t)) - \langle \nabla U(r_0(t)),\varphi(t)\rangle\ \ud t
    \end{split}
\end{displaymath}
for some $\overline{T}\in(1,+\infty)$. Using Ascoli-Arzelà's Theorem, we can say that $\varphi^n\rightarrow\varphi$ uniformly on compact sets, which implies that $\langle \nabla U(r_0),\varphi^n\rangle \rightarrow \langle \nabla U(r_0),\varphi\rangle$ uniformly in $[1,\overline{T}]$, as $n\rightarrow+\infty$, for every $\overline{T}<+\infty$. Then, using Fatou's Lemma, it easily follows that the term $\mathcal{A}_{[1,\overline{T})}(\varphi)$ is weak lower semicontinuous.

Concerning the term $\mathcal{A}_{[\overline{T},+\infty)}(\varphi)$, we can write:
\begin{displaymath}
    \begin{split}
    \mathcal{A}_{[\overline{T},+\infty)}(\varphi) = \int_{\overline{T}}^{+\infty} &\frac{1}{2}\|\dot{\varphi}(t)\|_\mathcal{M}^2 + \frac{1}{2}\langle \nabla^2 U(r_0(t))\varphi(t),\varphi(t)\rangle\\
    &+ U(r_0(t)+\varphi(t)+\Tilde{x}^0) - U(r_0(t)) - \langle \nabla U(r_0(t)),\varphi(t)\rangle - \frac{1}{2}\langle \nabla^2 U(r_0(t))\varphi(t),\varphi(t)\rangle\ \ud t.
    \end{split}
\end{displaymath}
\textbf{Claim}: The map $\varphi(t) \mapsto \bigg( \int_{1}^{+\infty} \frac{1}{2}\|\dot{\varphi}(t)\|_\mathcal{M}^2 + \frac{1}{2}\langle \nabla^2 U(r_0(t))\varphi(t),\varphi(t)\rangle\ \ud t \bigg)^{1/2}$ is an equivalent norm to $\|\cdot\|_\mathcal{D}$. Indeed:
\begin{itemize}
    \item Since $U(x)\geq \frac{U_{min}}{\|x\|}$ for each $x\neq0$, it follows that $\nabla^2 U(x) \geq -U_{min}\frac{Id}{\|x\|^3}$, which implies $\langle \nabla^2 U(r_0(t))\varphi(t),\varphi(t)\rangle \geq -\frac{2}{9}\frac{\|\varphi(t)\|_\mathcal{M}^2}{t^2}$ for each $t\in[1,+\infty)$. Then, by Hardy inequality, we have
    \begin{displaymath}
        \int_{1}^{+\infty} \frac{1}{2}\|\dot{\varphi}(t)\|_\mathcal{M}^2 + \frac{1}{2}\langle \nabla^2 U(r_0(t))\varphi(t),\varphi(t)\rangle\ \ud t \geq \bigg( 1-\frac{8}{9} \bigg)\|\varphi\|^2_\mathcal{D} = \frac{1}{9}\|\varphi\|^2_\mathcal{D}.
    \end{displaymath}
    \item Using the fact that, for some constant $C_5>0$,
    \begin{displaymath}
        \langle \nabla^2 U(r_0(t))\varphi(t),\varphi(t)\rangle \leq C_5\frac{\|\varphi(t)\|_\mathcal{M}}{t^2}
    \end{displaymath}
    and Hardy inequality, we have
    \begin{displaymath}
        \int_{1}^{+\infty} \frac{1}{2}\|\dot{\varphi}(t)\|_\mathcal{M}^2 + \frac{1}{2}\langle \nabla^2 U(r_0(t))\varphi(t),\varphi(t)\rangle\ \ud t \leq C_6\|\varphi\|_\mathcal{D}^2,
    \end{displaymath}
    for some constant $C_6>0$.
\end{itemize}
From the equivalence between the two norms, we have that the term $\int_{\overline{T}}^{+\infty} \frac{1}{2}\|\dot{\varphi}(t)\|_\mathcal{M}^2 + \frac{1}{2}\langle \nabla^2 U(r_0(t))\varphi(t),\varphi(t)\rangle\ \ud t$ is weak lower semicontinuous. 

Using Taylor's series expansion, we can write
\begin{displaymath}
    \begin{split}
    \int_{\overline{T}}^{+\infty} & U(r_0(t)+\varphi(t)+\Tilde{x}^0) - U(r_0(t)) - \langle \nabla U(r_0(t)),\varphi(t)\rangle - \frac{1}{2}\langle \nabla^2 U(r_0(t)),\varphi(t),\varphi(t)\rangle\ \ud t \\
    & = \int_{\overline{T}}^{+\infty}\int_{0}^{1}\int_{0}^{1}\int_{0}^{1} \langle \nabla^3 U(r_0(t) + \tau_1\tau_2\tau_3 (\varphi^n(t)+\Tilde{x}^0))(\varphi^n(t)+\Tilde{x}^0),\varphi^n(t)+\Tilde{x}^0,\varphi^n(t)+\Tilde{x}^0\rangle \tau_1 \tau_2^2\ \ud \tau_1\ \ud \tau_2\ \ud \tau_3\ \ud t.
     \end{split}
\end{displaymath}
Obviously there is a $\Tilde{t}>1$ such that
\begin{displaymath}
    \|r_0(t) + \tau_1\tau_2\tau_3 (\varphi^n(t)+\Tilde{x}^0)\|_\mathcal{M}>0
\end{displaymath}
for every $t\geq\Tilde{t}$. We can then choose $\overline{T}\geq\Tilde{t}$ and we have 
\begin{displaymath}
    \langle \nabla^3 U(r_0(t) + \tau_1\tau_2\tau_3 (\varphi^n(t)+\Tilde{x}^0))(\varphi^n(t)+\Tilde{x}^0),\varphi^n(t)+\Tilde{x}^0,\varphi^n(t)+\Tilde{x}^0\rangle \leq C_7\frac{\|\varphi^n(t) + \Tilde{x}^0\|_\mathcal{M}^3}{t^{8/3}} \leq C_8\frac{\|\varphi^n\|_\mathcal{D}^3 t^{3/2}}{t^{8/3}} \leq \frac{C_9}{t^{7/6}},
\end{displaymath}
for every $t\geq\overline{T}$ and for proper constants $C_7,C_8,C_9>0$. This means that the term $\langle \nabla^3 U(r_0(t) + \tau_1\tau_2\tau_3 (\varphi^n(t)+\Tilde{x}^0))(\varphi^n(t)+\Tilde{x}^0),\varphi^n(t)+\Tilde{x}^0,\varphi^n(t)+\Tilde{x}^0\rangle \tau_1 \tau_2^2$ is $L^1$-dominated and the weak lower semicontinuity of $\mathcal{A}_{[\overline{T},+\infty)}$ follows from the Dominated Convergence Theorem.

\subsection{The renormalized action is of class $C^1$ over non-collision sets}

Now, we prove that the action $\mathcal{A}$ is $C^1$ over the set $\mathcal{D}_0^{1,2}([1,+\infty))\setminus\{\varphi\in\mathcal{D}_0^{1,2}\ : \exists\ t\text{ such that }r_0(t)+\varphi(t)+\Tilde{x}^0=0 \}$. The term $\int_{1}^{+\infty}\frac{1}{2}\langle \mathcal{M}\dot{\varphi}(t),\dot{\varphi}(t)\rangle\ \ud t = \frac{1}{2}\|\varphi\|^2_\mathcal{D}$ is of course a smooth functional, so we focus on the term
\begin{displaymath}
    \mathcal{A}^2(\varphi) := \int_{1}^{+\infty} K(t,\varphi(t))\ \ud t,
\end{displaymath}
where
\begin{displaymath}
    K(t,\varphi(t)):=U(r_0(t)+\varphi(t)+\Tilde{x}^0) - U(r_0(t)) - \langle \nabla U(r_0(t)),\varphi(t)\rangle.
\end{displaymath}
We have
\begin{displaymath}
    \ud \mathcal{A}^2(\varphi)[\psi] = \int_{1}^{+\infty} \langle\nabla K(t,\varphi(t)),\psi(t)\rangle\ \ud t = \int_{1}^{+\infty} \langle\nabla U(r_0(t)+\varphi(t)+\Tilde{x}^0)-\nabla U(r_0(t)),\psi(t)\rangle\ \ud t 
\end{displaymath}
for every $\psi\in\mathcal{D}_0^{1,2}(1,+\infty)$. Given a sequence $(\varphi^n)_n\subset\mathcal{D}_0^{1,2}(1,+\infty)$ we have to prove that if $\varphi^n\rightarrow\varphi$ in $\mathcal{D}_0^{1,2}(1,+\infty)$, then 
\begin{displaymath}
    \sup_{\|\psi\|_\mathcal{D}\leq 1} \bigg|\int_{1}^{+\infty} \langle \nabla K(t,\varphi^n(t))-\nabla K(t,\varphi(t)),\psi(t)\rangle\ \ud t\bigg|\rightarrow0.
\end{displaymath}
Since
\begin{displaymath}
    \nabla K(t,\varphi(t)) = \nabla U(r_0(t)+\varphi(t)+\Tilde{x}^0)-\nabla U(r_0(t)) = \int_{0}^{1} \nabla^2 K(t,s\varphi(t))\varphi(t)\ \ud s,
\end{displaymath}
we can estimate
\begin{equation}\label{ex_par_inequality_K}
    \|\nabla K(t,\varphi(t))\|_\mathcal{M} \leq \int_{0}^{1} \|\nabla^2 K(t,s\varphi(t))\|_\mathcal{M} \|\varphi(t)\|_\mathcal{M}\ \ud s \leq C_{10}\frac{\|\varphi(t)\|_\mathcal{M}}{t^2},
\end{equation}
where $C_{10}>0$ is a proper constant. Using the Cauchy-Schwartz inequality we can then compute
\begin{displaymath}
    \begin{split}
    &\sup_{\|\psi\|_\mathcal{D}\leq 1} \bigg|\int_{1}^{+\infty} \langle \nabla K(t,\varphi^n(t))-\nabla K(t,\varphi(t)),\psi(t)\rangle\ \ud t\bigg| \\
    &\leq \sup_{\|\psi\|_\mathcal{D}\leq 1} \int_{1}^{+\infty} t\| \nabla K(t,\varphi^n(t))-\nabla K(t,\varphi(t))\|_\mathcal{M}\frac{\|\psi(t)\|_\mathcal{M}}{t}\ \ud t\\
    &\leq \sup_{\|\psi\|_\mathcal{D}\leq 1} \bigg( \int_{1}^{+\infty} \frac{\|\psi(t)\|_\mathcal{M}^2}{t^2}\ \ud t\bigg)^{1/2} \bigg( \int_{1}^{+\infty} t^2 \| \nabla K(t,\varphi^n(t))-\nabla K(t,\varphi(t))\|_\mathcal{M}^2\ \ud t\bigg)^{1/2} \\
    & \leq 2 \bigg( \int_{1}^{+\infty} t^2 \| \nabla K(t,\varphi^n(t))-\nabla K(t,\varphi(t))\|_\mathcal{M}^2\ \ud t\bigg)^{1/2}.  
    \end{split}
\end{displaymath}
Now, using \eqref{ex_par_inequality_K}
\begin{displaymath}
\begin{split}
    \| \nabla K(t,\varphi^n(t))-\nabla K(t,\varphi(t))\|_\mathcal{M}^2 &= \bigg| \int_{0}^{1} \nabla^2 K(t,\varphi(t)+\sigma(\varphi^n(t)-\varphi))(\varphi^n(t)-\varphi(t))\ \ud \sigma \bigg|^2\\
    & \leq \bigg( \int_{0}^{1} \|\nabla^2 K(t,\varphi(t)+\sigma(\varphi^n(t)-\varphi(t)))(\varphi^n(t)-\varphi(t))\|_\mathcal{M}\ \ud \sigma \bigg)^2\\
    & \leq \bigg( \int_{0}^{1} \frac{\|\varphi^n(t)-\varphi(t)\|_\mathcal{M}}{t^2}\ \ud \sigma \bigg)^2 \\
    & = \frac{\|\varphi^n(t)-\varphi(t)\|_\mathcal{M}^2}{t^4}.
\end{split}
\end{displaymath}
From this last computation, it follows that 
\begin{displaymath}
    \begin{split}
    \bigg( \int_{1}^{+\infty} t^2 \| \nabla K(t,\varphi^n(t))-\nabla K(t,\varphi(t))\|_\mathcal{M}^2\ \ud t\bigg)^{1/2} & \leq \bigg( \int_{1}^{+\infty} \frac{\|\varphi^n(t)-\varphi(t)\|_\mathcal{M}^2}{t^2}\ \ud t\bigg)^{1/2}\\
    & \leq 2 \bigg( \int_{1}^{+\infty} \|\dot{\varphi}^n(t)-\dot{\varphi}(t)\|_\mathcal{M}^2\ \ud t\bigg)^{1/2}\\
    & = 2\|\varphi^n - \varphi\|_\mathcal{D}
    \end{split}
\end{displaymath}
and since $\|\varphi^n - \varphi\|_\mathcal{D}\rightarrow0$ as $n\rightarrow+\infty$, this proves our thesis.

\subsection{Absence of collisions and parabolicity of the motion}

Given a minimizer of the Lagrangian action $\varphi\in\mathcal{D}_0^{1,2}(1,+\infty)$, we apply Marchal's Theorem to state that $\varphi$ has no collisions.

To conclude, we observe that given 
\begin{displaymath}
    x(t) = \varphi(t) + \beta b_m t^{2/3} + \Tilde{x}^0,
\end{displaymath}
we have
\begin{displaymath}
    \dot{x}(t) = \dot{\varphi}(t) + \frac{2}{3}\beta b_m t^{-1/3}.
\end{displaymath}
To prove that the motion $x$ is indeed parabolic, we still have to prove that 
\begin{displaymath}
    \lim_{t\rightarrow+\infty}\dot{x}(t) = \lim_{t\rightarrow+\infty}\dot{\varphi}(t) = 0.
\end{displaymath}

Since $\int_{1}^{+\infty} |\dot{\varphi}_{ij}(t)|^2\ \ud t < +\infty$, we have
\begin{displaymath}
    \liminf_{t\rightarrow+\infty} |\dot{\varphi}_{ij}(t)| = 0.
\end{displaymath}
Because of the conservation of the energy along the motion, we have
\begin{displaymath}
    \frac{1}{2}\|\dot{x}(t)\|^2_\mathcal{M} - U(x(t)) = \frac{1}{2}\sum_{i=1}^{N} m_i \bigg|\dot{\varphi}_i(t) + \frac{2}{3}\beta b_{m_i} t^{-1/3} \bigg|^2 - U(x(t)) = h.
\end{displaymath}
Since there is at least a subsequence $(t_k)_k$, with $t_k\rightarrow+\infty$, such that $\lim_{t_k\rightarrow+\infty} \dot{\varphi}_i(t_k)=0$, it follows that $h=0$ and, consequently, 
\begin{displaymath}
    \frac{1}{2}\|\dot{x}(t)\|^2_\mathcal{M} - U(x(t)) = 0.
\end{displaymath}
From this, we have that $\lim_{t\rightarrow+\infty} \dot{x}(t) = 0$.

\subsection{Asymptotic estimates for half parabolic motions}

In order to give a better description of the asymtptoic expansion of parabolic motions, we can improve inequality \eqref{2.3}. In particular, we can show that for any $\varphi\in\mathcal{D}_0^{1,2}(1,+\infty)$, it holds
\begin{equation}\label{new_estimate_parabolic}
    \|\varphi(t)\|_\mathcal{M} \leq c t^{\frac{1}{3}+\varepsilon},\quad \forall\varepsilon>0,
\end{equation}
for a proper constant $c\in\R$. This section is devoted to the proof of this estimate.

Let us consider a half parabolic motion $x(t)$ having the form \eqref{expr_parabolic_motion}, where $\varphi\in\mathcal{D}_0^{1,2}(1,+\infty)$ is a solution of the equations of motion $\mathcal{M}\ddot{\varphi}(t) = \nabla U(r_0(t) + \varphi(t) + \tilde{x}^0) - \nabla U(r_0(t))$. We can write:
\begin{displaymath}
\begin{split}
    \mathcal{M}\ddot{\varphi}(t) &= \frac{1}{\beta^2 t^{4/3}} \bigg[ \nabla U\bigg(\frac{x(t)}{\beta t^{2/3}}\bigg) - \nabla U\bigg(\frac{r_0(t)}{\beta t^{2/3}}\bigg) \bigg] \\
    & = \frac{1}{\beta^2 t^{4/3}} \bigg[ \nabla U\bigg(b_m + \frac{\varphi(t)}{\beta t^{2/3}} + \frac{\Tilde{x}^0}{\beta t^{2/3}}\bigg) - \nabla U(b_m) \bigg]\\
    & = \frac{1}{\beta^3 t^2}\int_{0}^{1} \nabla^2 U\bigg(b_m + \theta\frac{(\varphi(t)+\tilde{x}^0)}{\beta t^{2/3}}\bigg)(\varphi(t)+\tilde{x}^0)\ \ud\theta\\
    & = \frac{1}{\beta^3 t^2}\bigg[\int_{0}^{1} \nabla^2 U\bigg(b_m + \theta\frac{(\varphi(t)+\tilde{x}^0)}{\beta t^{2/3}}\bigg)\ \ud\theta\bigg](\varphi(t)+\tilde{x}^0),
\end{split}
\end{displaymath}
where we can view the integral term as a matrix.

Fixing a real constant $\delta\in(1,2)$ and a sufficiently big constant $k\in\R$, we define a test function $\psi_k:\R\rightarrow\mathcal{X}$ as
\begin{displaymath}
    \psi_k(t)=\eta^2\min\{k,\|\varphi(t)\|_\mathcal{M}^{\delta-1}\}\varphi(t)
\end{displaymath}
where $\eta:\R\rightarrow\R$ is a $C^\infty$-class cut-off function having the form
\begin{displaymath}
	\eta(t) = \begin{cases}
		0, \quad t\in[1,R] \\
		1, \quad t\in[2R,+\infty)
	\end{cases},
\end{displaymath}
for $R$ big enough, with $0<\eta(t)<1,\ \forall t\in(R,2R)$ . We point out that $k$ can be chosen such that $\eta\equiv1$ when $\|\varphi(t)\|_\mathcal{M}^{\delta-1}>k$, so that we have
\begin{displaymath}
    \dot{\psi}_k(t) =
    \begin{cases}
        2\eta\dot{\eta}\|\varphi(t)\|_\mathcal{M}^{\delta-1}\varphi(t) + \eta^2\delta\|\varphi(t)\|_\mathcal{M}^{\delta-2}\langle\varphi(t),\dot{\varphi}(t)\rangle_\mathcal{M},\qquad & t\in I_k\\
        k\dot{\varphi}(t),\qquad&t\in\hat{I}_k
    \end{cases},
\end{displaymath}
where $I_k = \{t\in[1,+\infty) : \|\varphi(t)\|_\mathcal{M}^{\delta-1}\leq k\}$ and $\hat{I}_k = [1,+\infty)\setminus I_k = \{t\in[1,+\infty) : \|\varphi(t)\|_\mathcal{M}^{\delta-1}> k\}$.

Multiplying the equations of motion for $\psi_k(t)$ and integrating, we obtain
\begin{displaymath}\begin{split}
    &\int_{R}^{+\infty} -\langle\ddot{\varphi}(t),\psi_k(t)\rangle_\mathcal{M} + \bigg\langle\frac{1}{\beta^3 t^2} \bigg[\int_{0}^{1} \nabla^2 U\bigg(b_m + \theta\frac{(\varphi(t)+\tilde{x}^0)}{\beta t^{2/3}}\bigg)\ \ud\theta\bigg](\varphi(t)+\tilde{x}^0),\psi_k\bigg\rangle\ \ud t\\
    & = \int_{R}^{+\infty}\langle\dot{\varphi}(t),\dot{\psi}_k(t)\rangle_\mathcal{M} + \bigg\langle\frac{1}{\beta^3 t^2} \bigg[\int_{0}^{1} \nabla^2 U\bigg(b_m + \theta\frac{(\varphi(t)+\tilde{x}^0)}{\beta t^{2/3}}\bigg)\ \ud\theta\bigg](\varphi(t)+\tilde{x}^0),\psi_k\bigg\rangle\ \ud t.
\end{split}\end{displaymath}

Recalling that $\|\nabla^2 U(r_0 + \theta(\varphi(t)+\tilde{x}^0))\|_\mathcal{M} \leq \frac{C_{11}}{t^2}$ for a proper constant $C_{11}$, for every $t>1$ and for every $\theta\in[0,1]$, we can use Hölder's and Hardy's inequalities to estimate
\begin{displaymath}
    \begin{split}
        & \int_{R}^{+\infty}\langle\dot{\varphi}(t),\dot{\psi}_k(t)\rangle_\mathcal{M} + \bigg\langle \bigg[\int_{0}^{1} \nabla^2 U(r_0(t) + \theta(\varphi(t)+\tilde{x}^0))\ \ud\theta\bigg]\varphi(t),\psi_k(t)\bigg\rangle\ \ud t \\
        & = -\int_{R}^{+\infty}\bigg\langle \bigg[\int_{0}^{1} \nabla^2 U(r_0(t) + \theta(\varphi(t)+\tilde{x}^0))\ \ud\theta\bigg]\tilde{x}^0,\psi_k(t)\bigg\rangle\ \ud t \\
        & \leq C_{11}\int_{R}^{+\infty}\frac{\|\psi_k(t)\|_\mathcal{M}}{t^2}\ \ud t \\
        & \leq C_{11} \int_{R}^{+\infty}\frac{\|\varphi(t)\|_\mathcal{M}^\delta}{t^2}\ \ud t \\
        & = C_{11} \int_{R}^{+\infty}\frac{1}{t^{2-\delta}}\frac{\|\varphi(t)\|_\mathcal{M}^\delta}{t^\delta}\ \ud t \\
        & \leq C_{11} \bigg( \int_{R}^{+\infty}\frac{1}{t^2}\ \ud t\bigg)^{(2-\delta)/2}\bigg( \int_{R}^{+\infty}\frac{\|\varphi(t)\|_\mathcal{M}^2}{t^2}\ \ud t \bigg)^{\delta/2} \\
        & \leq C_{12} \|\varphi\|_\mathcal{D}^\delta,
    \end{split}
\end{displaymath}
where $C_{12}$ is a proper constant.

\begin{rem}
    We recall that the Keplerian potential $U$ is homogeneous of degree -1. For any configuration $x\in\mathcal{X}$, denoting $s=\frac{x}{\|x\|_\mathcal{M}}$,
    \begin{displaymath}
        U(x) = U\bigg(\|x\|_\mathcal{M} \frac{x}{\|x\|_\mathcal{M}}\bigg) = \frac{U(s)}{\|x\|_\mathcal{M}}.
    \end{displaymath}
    The Hessian matrix of $U$ with respect to $x$ is
    \begin{displaymath}
        \nabla^2 U(x) = -\frac{U(s)\mathcal{M}}{\|x\|_\mathcal{M}^3} + 3\frac{U(s)}{\|x\|_\mathcal{M}^5}\mathcal{M}x\otimes \mathcal{M}x - 2\frac{\nabla_s U(s)\otimes \mathcal{M}x}{\|x\|_\mathcal{M}^4} + \frac{\nabla_s^2 U(s)}{\|x\|_\mathcal{M}^3},
    \end{displaymath}
    where $x \otimes x$ denotes the symmetric square matrix with components $(x \otimes x)_{ij} = x_i x_j$ for $i,j \in {1,...,N}$, and $\nabla_s U(s)$ and $\nabla_s^2 U(s)$ represent the gradient and the Hessian matrix of $U$ with respect to $s$, respectively. 
    Choosing $s=b_m$, since $b_m$ is the minimum of the restricted potential, we have $\frac{\nabla_s U(s)\otimes \mathcal{M}x}{\|x\|_\mathcal{M}^4} = 0$. Besides, since $\mathcal{M}x\otimes \mathcal{M}x$ and $\nabla_s^2 U(s)$ are positive semidefinite quadratic forms, it holds
    \begin{equation}\label{diseq_hessian_kepler_potential}
        \nabla^2 U(x) \geq -\frac{U(b_m)\mathcal{M}}{\|x\|_\mathcal{M}^3}.
    \end{equation}
\end{rem}

Using a continuity argument and \eqref{diseq_hessian_kepler_potential}, we can also say that for every $\mu>0$ there is a $\bar{T}>0$ such that, for every $t>\bar{T}$,
\begin{displaymath}
    \frac{1}{\beta^3 t^2} \nabla^2 U\bigg(b_m + \theta\frac{(\varphi(t)+\tilde{x}^0)}{\beta t^{2/3}}\bigg) \geq -\frac{2}{9}(1+\mu)\frac{\mathcal{M}}{t^2}
\end{displaymath}
in the sense of quadratic forms. It follows
\begin{displaymath}
    \begin{split}
        & \int_{R}^{+\infty}\langle\dot{\varphi}(t),\dot{\psi}_k(t)\rangle_\mathcal{M} + \bigg\langle\frac{1}{\beta^3 t^2} \bigg[\int_{0}^{1} \nabla^2 U\bigg(b_m + \theta\frac{(\varphi(t)+\tilde{x}^0)}{\beta t^{2/3}}\bigg)\ \ud\theta\bigg]\varphi(t),\psi_k(t)\bigg\rangle\ \ud t \\
        & \geq \int_{R}^{+\infty}\langle\dot{\varphi}(t),\dot{\psi}_k(t)\rangle_\mathcal{M} -\frac{2}{9}(1+\mu)\bigg\langle\frac{\varphi(t)}{t^2},\psi_k(t)\bigg\rangle_\mathcal{M}\ \ud t.
    \end{split}
\end{displaymath}
To estimate the right-hand side of the last inequality, we study the integral separately on the two complementary sets $I_k$ and $\hat{I}_k$. In $I_k$, we have
\begin{displaymath}
    \begin{split}
        & \int_{I_k}\langle\dot{\varphi}(t),\dot{\psi}_k(t)\rangle_\mathcal{M} -\frac{2}{9}(1+\mu)\bigg\langle\frac{\varphi(t)}{t^2},\psi_k(t)\bigg\rangle_\mathcal{M}\ \ud t \\
        & = \int_{I_k} 2\eta\dot{\eta}\|\varphi(t)\|_\mathcal{M}^{\delta-1}\langle\dot{\varphi}(t),\varphi(t)\rangle_\mathcal{M} + \eta^2\delta\|\varphi(t)\|_\mathcal{M}^{\delta-1}\|\dot{\varphi(t)}\|_\mathcal{M} -\frac{2}{9}(1+\mu)\eta^2\frac{\|\varphi(t)\|_\mathcal{M}^{\delta+1}}{t^2}\ \ud t,
    \end{split}
\end{displaymath}
which implies
\begin{displaymath}
    \int_{I_k} \eta^2\delta\|\varphi(t)\|_\mathcal{M}^{\delta-1}\|\dot{\varphi}(t)\|_\mathcal{M} -\frac{2}{9}(1+\mu)\eta^2\frac{\|\varphi(t)\|_\mathcal{M}^{\delta+1}}{t^2}\ \ud t \leq \int_{I_k} 2\eta\dot{\eta}\|\varphi(t)\|_\mathcal{M}^{\delta}\|\dot{\varphi}(t)\|_\mathcal{M}\ \ud t + C_{12}\|\varphi\|_\mathcal{D}^\delta. %\\
    %& \leq C''\int_{I_k} \|\varphi\|_\mathcal{M}^{\delta}\|\dot{\varphi}\|_\mathcal{M}\ \ud t + C'\|\varphi\|_\mathcal{D}^\delta \\
    %& \leq C'''\|\varphi\|_\mathcal{D} + C'\|\varphi\|_\mathcal{D}^\delta,
\end{displaymath}
where the cut-off function makes sure that the last integral is finite. Besides, we also have
\begin{displaymath}
    \begin{split}
        & \int_{I_k} \eta^2\delta\|\varphi(t)\|_\mathcal{M}^{\delta-1}\|\dot{\varphi}(t)\|_\mathcal{M} -\frac{2}{9}(1+\mu)\eta^2\frac{\|\varphi(t)\|_\mathcal{M}^{\delta+1}}{t^2}\ \ud t \\
        & = \int_{I_k} \frac{4\delta}{(\delta+1)^2}\bigg(\eta\frac{\ud}{\ud t}\|\varphi(t)\|_\mathcal{M}^{\frac{\delta+1}{2}}\bigg)^2 -\frac{2}{9}(1+\mu)\eta^2\frac{\|\varphi(t)\|_\mathcal{M}^{\delta+1}}{t^2}\ \ud t.
    \end{split}
\end{displaymath}

On the other hand, working on the interval $\hat{I}_k$ we obtain
\begin{displaymath}
    \begin{split}
        & \int_{\hat{I}_k}\langle\dot{\varphi}(t),\dot{\psi}_k(t)\rangle_\mathcal{M} -\frac{2}{9}(1+\mu)\bigg\langle\frac{\varphi(t)}{t^2},\psi_k(t)\bigg\rangle_\mathcal{M}\ \ud t \\
        & = \int_{\hat{I}_k} k\|\dot{\varphi}(t)\|_\mathcal{M}^2 - \frac{2}{9}(1+\mu)k\frac{\|\varphi(t)\|_\mathcal{M}^2}{t^2}\ \ud t \\
        & \geq \int_{\hat{I}_k} \frac{4\delta}{(\delta+1)^2} k\|\dot{\varphi}(t)\|_\mathcal{M}^2 - \frac{2}{9}(1+\mu)k\frac{\|\varphi(t)\|_\mathcal{M}^2}{t^2}\ \ud t,
    \end{split}
\end{displaymath}
where we used the fact that $\frac{4\delta}{(\delta+1)^2} < 1$ for every $\delta\in(1,2)$.

Now, we define a function $u_k:\R\rightarrow\R$ as
\begin{displaymath}
    u_k(t) = \min\{\eta\|\varphi(t)\|_\mathcal{M}^{\frac{\delta-1}{2}},k^{1/2}\}\|\varphi(t)\|_\mathcal{M}.
\end{displaymath}

Putting everything together, we can use Hardy's inequality to say that 
\begin{displaymath}
    \begin{split}
        &\int_{I_k} \frac{4\delta}{(\delta+1)^2}\bigg(\eta\frac{\ud}{\ud t}\|\varphi(t)\|_\mathcal{M}^{\frac{\delta+1}{2}}\bigg)^2 -\frac{2}{9}(1+\mu)\eta^2\frac{\|\varphi(t)\|_\mathcal{M}^{\delta+1}}{t^2}\ \ud t + \int_{\hat{I}_k} \frac{4\delta}{(\delta+1)^2} k\|\dot{\varphi}(t)\|_\mathcal{M}^2 - \frac{2}{9}(1+\mu)k\frac{\|\varphi(t)\|_\mathcal{M}^2}{t^2}\ \ud t \\
        & = \int_{1}^{+\infty} \frac{4\delta}{(\delta+1)^2} \|\dot{u}_k(t)\|_\mathcal{M}^2 - \frac{2}{9}(1+\mu)\frac{\|u_k(t)\|_\mathcal{M}^2}{t^2}\ \ud t \\
        & \geq \int_{1}^{+\infty} \bigg(\frac{4\delta}{(\delta+1)^2} - \frac{8}{9}(1+\mu)\bigg)\|\dot{u}_k(t)\|_\mathcal{M}^2\ \ud t.
    \end{split}
\end{displaymath}
In particular, we can choose $\mu$ such that $\frac{4\delta}{(\delta+1)^2} - \frac{8}{9}(1+\mu) > 0$, which proves that $u_k\in\mathcal{D}_0^{1,2}(1,+\infty)$. 

Since the estimates we obtained do not depend on $k$, we can take $k\rightarrow+\infty$ so that \eqref{2.3} leads us to the conclusion of our proof. We have thus shown that for any $\varphi\in\mathcal{D}_0^{1,2}(1,+\infty)$ and for any $\delta\in(1,2)$ there is a constant $c$, which depends on $\delta$ and $\|\varphi\|_\mathcal{D}$, such that 
\begin{displaymath}
    \|\varphi(t)\|_\mathcal{M} \leq c t^{\frac{1}{\delta+1}},\qquad\forall t\geq1.
\end{displaymath}

\section{Existence of minimal half hyperbolic-parabolic motions}\label{sec_hyperbolic_parabolic}

This last section is devoted to the proof of Theorem \ref{thm_partially_hyperbolic}. To prove the existence of hyperbolic-parabolic solutions in the $N$-body problem, we will use the cluster decomposition that we briefly introduced in Section \ref{sec_intro} to decompose the Lagrangian action, so that we will finally be able to minimize the renormalized action over the set $\mathcal{D}_0^{1,2}(1,+\infty)$. 
\begin{defn}\label{def_cluster_partition}
    Given a configuration $a\in\mathcal{X}$ and a motion $x(t)=at+O(t^{2/3})$ as $t\rightarrow+\infty$, its corresponding natural partition ($a$-partition) of the index set $\mathcal{N}=\{1,...,N\}$ is the one for which $i,j\in\mathcal{N}$ belong to the same class if and only if the mutual distance $|r_i(t)-r_j(t)|$ grows as $O(t^{2/3})$. Equivalently, if $a = (a_1,...,a_N)$, then the natural partition is defined by the relation $i \sim j$ if and only if $a_i = a_j$. The partition classes will be called clusters.
\end{defn}

We give now some definitions and basic notations related to a given partition  $\mathcal{P}$ of the set $\mathcal{N}=\{1,...,N\}$.
\begin{defn}
    Let $\mathcal{P}$ be a given partition of $\mathcal{N}$ and consider a configuration $x=(r_1,...,r_N)\in \mathcal{X}$. For each cluster $K\in\mathcal{P}$ we define the mass of the cluster as
    \begin{displaymath}
        M_K=\sum_{i\in K}m_i.
    \end{displaymath}
    Besides, for any couple of clusters $K_1,K_2\in\mathcal{P}$, $K_1\neq K_2$, we define the mass of the two clusters as
    \begin{displaymath}
        M_{K_{1,2}}=\sum_{i\in K_1 \cup K_2}m_i.
    \end{displaymath}
\end{defn}
\begin{defn}
    Let $\mathcal{P}$ be any given partition of $\mathcal{N}$. Then, for every given curve $x(t)=(r_1(t),...,r_N(t))$ in $\mathcal{X}$ and for each cluster $K\in\mathcal{P}$ we define the function
    \begin{displaymath}
        U_K(t) = \sum_{i,j\in K,\ i<j}\frac{m_i m_j}{|r_i(t)-r_j(t)|},
    \end{displaymath}
    which represents the restriction of the potential $U$ to the cluster $K$.
\end{defn}

The system we are studying here is
\begin{displaymath}
\begin{cases}
    \mathcal{M}\ddot{x}=\nabla U(x)\\
    x(1)=x^0\\
    \lim_{t\rightarrow+\infty}\dot{x}(t)=a
\end{cases},
\end{displaymath}
where $x^0\in\mathcal{X}$ and $a$ is a configuration with collisions. We will look for solutions of the form $x(t) = \varphi(t)+\gamma_0(t)$, for any $\varphi\in\mathcal{D}_0^{1,2}(1,+\infty)$, where $\gamma_0$ is a proper function, so that our problem equivalently reads
\begin{displaymath}
\begin{cases}
    \mathcal{M}\ddot{\varphi}=\nabla U(\varphi+\gamma_0)-\ddot{\gamma}_0\\
    \varphi(0)=0\\
    \lim_{t\rightarrow+\infty}\dot{\varphi}(t)=0
\end{cases}.
\end{displaymath}
We can thus prove the existence of solutions to the last system by minimizing the associated renormalized Lagrangian action.

We partition the indexes according to the natural cluster partition, so that we obtain a partition of $\mathcal{N}$ of the form
\begin{displaymath}
    K_1:=\{1,...,k_1\},\ K_2:=\{k_1+1,...,k_2\},\ K_3:=\{k_2+1,...,k_3\}... 
\end{displaymath}
For every $K_i$, we can choose a central configuration $b^{K_i}$ which is minimal for that particular cluster and we can define the configuration
\begin{displaymath}
    b=(b^{K_1},b^{K_2},...)\in \mathcal{X}.
\end{displaymath}
Using this particular definition of $b$, we can then look for solutions of the form
\begin{equation}\label{explicit_hyperbolic-parabolic}
    x(t) = \varphi(t) + at +\beta b t^{2/3} -a -b +x^0 = \varphi(t) + at +\beta b t^{2/3} + \Tilde{x}^0.
\end{equation}
Here, $\beta$ is a real vector with as many components as the number of clusters. Precisely, we have
\begin{displaymath}
    \beta=(\beta_{K_1},\beta_{K_2},...),
\end{displaymath}
with 
\begin{displaymath}
    \beta_{K_1}=\sqrt[3]{\frac{9}{2}U_{min}^{K_1}}
\end{displaymath}and $U_{min}^{K_1}$ denotes the minimum of the potential $U$ restricted to the first cluster; 
\begin{displaymath}
    \beta_{K_2}=\sqrt[3]{\frac{9}{2}U_{min}^{K_2}}
\end{displaymath}
and $U_{min}^{K_2}$ denotes the minimum of the potential $U$ restricted to the second cluster, and so on. With an abuse of notation, in this section we write $\beta b$ to denote the configuration $(\beta_{K_1}b^{K_1}, \beta_{K_2}b^{K_2},...)\in\mathcal{X}$.

Using the aforementioned partition of the bodies, it is possible to decompose the Lagrangian action of a curve: for every $\varphi\in\mathcal{D}_0^{1,2}(1,+\infty)$, we define
\begin{equation}\label{action_partially_hyperbolic}
\begin{split}
    \mathcal{A}(\varphi) & := \sum_{K\in\mathcal{P}} \mathcal{A}_{K}(\varphi) + \sum_{K_1,K_2\in\mathcal{P},\ K_1\neq K_2} \mathcal{A}_{K_1,K_2}(\varphi)\\
    &  = \sum_{K\in\mathcal{P}}\bigg(\sum_{i,j\in K,\ i<j} \mathcal{A}_{K}^{ij}(\varphi)\bigg) + \frac{1}{2}\sum_{K_1,K_2\in\mathcal{P},\ K_1\neq K_2}\bigg(\sum_{i\in K_1,\ j\in K_2} \mathcal{A}_{K_1,K_2}^{ij}(\varphi)\bigg),
\end{split}
\end{equation}
where
\begin{equation}\label{action_partially_hyperbolic_inside}
\begin{split}
    \mathcal{A}_{K}^{ij}(\varphi) := & \int_{1}^{+\infty} \frac{1}{2M_K} m_i m_j|\dot{\varphi}_i(t)-\dot{\varphi}_j(t)|^2 + \frac{m_i m_j}{|\varphi_{ij}(t)+a_{ij}t+\beta_K b^K_{ij}t^{2/3}+\Tilde{x}^0_{ij}|} - \frac{m_i m_j}{|\beta_K b^K_{ij}t^{2/3}|} \\
    &+ \frac{2}{9}\frac{\beta_K}{M_K}m_i m_j \frac{\langle b^K_{ij},\varphi_{ij}(t)\rangle}{t^{4/3}}\ \ud t,
\end{split}
\end{equation}
\begin{equation}\label{action_partially_hyperbolic_outside}
\begin{split}
    \mathcal{A}_{K_1,K_2}^{ij}(\varphi) := &\int_{1}^{+\infty}\frac{1}{2M_{K_{1,2}}} m_i m_j|\dot{\varphi}_i(t)-\dot{\varphi}_j(t)|^2+ \frac{m_i m_j}{|\varphi_{ij}(t)+a_{ij}t+\beta_{K_{1,2}} b^{K_{1,2}}_{ij}t^{2/3}+\Tilde{x}^0_{ij}|} - \frac{m_i m_j}{|a_{ij}t|}\ \ud t.
\end{split}
\end{equation}
Here, we used the notations:
\begin{displaymath}
    \begin{split}
        &b^{K_{1,2}}=(b^{K_1},b^{K_2})\\
        &\beta_{K_{1,2}}b^{K_{1,2}}=(\beta_{K_1}b^{K_1},\beta_{K_2}b^{K_2})
    \end{split}
\end{displaymath}

We point out that the term \eqref{action_partially_hyperbolic_inside} is the part of the Lagrangian action that refer to the (parabolic) motion of the bodies inside each cluster, while the term \eqref{action_partially_hyperbolic_outside} refers to the (linear) motion of the cluster. In the following sections, we will study the two terms separately, in order to apply the Direct Method.

\subsection{Coercivity of $\mathcal{A}(\varphi)$}

We start with the proof of the coercivity of the Lagrangian action when restricted to a general cluster, where we denote by $K$ the set of indexes related to this cluster. Because of the natural cluster partition of the bodies, we have $a_i = a_j$ for any $i,j\in K$. This means that for any $\varphi\in\mathcal{D}_0^{1,2}(1,+\infty)$,
 \begin{displaymath}
 \begin{split}
     \mathcal{A}_{K}(\varphi) = \sum_{i,j\in K,\ i<j} &\int_{1}^{+\infty} \frac{1}{2M_K} m_i m_j|\dot{\varphi}_i(t)-\dot{\varphi}_j(t)|^2 + \frac{m_i m_j}{|\varphi_{ij}(t)+\beta_K b^K_{ij}t^{2/3}+\Tilde{x}^0_{ij}|} - \frac{m_i m_j}{|\beta_K b^K_{ij}t^{2/3}|} \\
     &+ \frac{2}{9}\frac{\beta_K}{M_K}m_i m_j \frac{\langle b^K_{ij},\varphi_{ij}(t)\rangle}{t^{4/3}}\ \ud t.
\end{split} 
\end{displaymath}
Using the homogeneity of the potential and denoting by $U_K$ the potential $U$ when restricted to the cluster $K$, we apply the inequality
\begin{displaymath}
    U_K(x) \geq \frac{U_K(b^K)}{\|x\|_\mathcal{M}} = \frac{U_{min}}{\|x\|_\mathcal{M}}
\end{displaymath}
to every configuration $x$ restricted to the cluster $K$. It follows
\begin{displaymath}
    \begin{split}
        \mathcal{A}_K(\varphi) \geq &\int_{1}^{+\infty} \sum_{i,j\in K,\ i<j} \bigg(\frac{1}{2M_K} m_i m_j|\dot{\varphi}_i(t)-\dot{\varphi}_j(t)|^2\bigg) + \frac{U_{min}}{\|\varphi(t) + \beta_K b^K t^{2/3}+\Tilde{x}^0\|_\mathcal{M}} - \frac{U_{min}}{\|\beta_K b^K t^{2/3}\|_\mathcal{M}}\\
        &+ \frac{2}{9}\frac{\beta_K}{M_K}\langle \mathcal{M}_K b^K,\varphi(t)\rangle\ \ud t,
    \end{split}
\end{displaymath}
where $\mathcal{M}_K$ denotes the matrix of the masses of the cluster $K$. Using the inequality
\begin{displaymath}
\begin{split}
    \frac{1}{\|\varphi(t) + \beta_K b^K t^{2/3}+\Tilde{x}^0\|_\mathcal{M}}  \geq &\frac{1}{\|\beta_K b^K t^{2/3}\|_\mathcal{M}} - \frac{1}{2\|\beta_K b^K\|_\mathcal{M}^3 t^2}(2t^{2/3}\beta_K\langle \mathcal{M}_K b^K,\varphi(t)\rangle \\ & + 2\langle \mathcal{M}_K\varphi(t),\Tilde{x}^0\rangle + 2t^{2/3}\beta_K\langle\mathcal{M}_K b^K,\Tilde{x}^0\rangle  + \|\varphi(t)\|_\mathcal{M}^2+\|\Tilde{x}^0\|_\mathcal{M}^2),
\end{split}
\end{displaymath}
which holds because of the convexity of the norm, we obtain
\begin{displaymath}
    \begin{split}
        \mathcal{A}_K(\varphi) &\geq \int_{1}^{+\infty} \sum_{i,j\in K,\ i<j} \frac{1}{2M_K} m_i m_j|\dot{\varphi}_{ij}(t)|^2 + \frac{2}{9}\frac{\beta_K}{M_K}\langle \mathcal{M}_K b^K,\varphi(t)\rangle \\
        & \hspace{4mm}- \frac{U_{min}}{2\beta_K^3\|b^K\|_\mathcal{M}^3 t^2}(2t^{2/3}\beta_K\langle \mathcal{M}_K b^K,\varphi(t)\rangle + 2\langle \mathcal{M}_K\varphi(t),\Tilde{x}^0\rangle + 2t^{2/3}\beta_K\langle\mathcal{M}_K b^K,\Tilde{x}^0\rangle + \|\varphi(t)\|_\mathcal{M}^2+\|\Tilde{x}^0\|_\mathcal{M}^2)\ \ud t\\
        & = \int_{1}^{+\infty} \frac{1}{2}\|\dot{\varphi}(t)\|_\mathcal{M}^2 \\
        & \hspace{4mm} - \frac{U_{min}}{2\beta_K^3\|b^K\|_\mathcal{M}^3 t^2}(2\langle \mathcal{M}_K\varphi(t),\Tilde{x}^0\rangle + 2t^{2/3}\beta_K\langle\mathcal{M}_K b^K,\Tilde{x}^0\rangle + \|\varphi(t)\|_\mathcal{M}^2+\|\Tilde{x}^0\|_\mathcal{M}^2)\ \ud t
    \end{split}
\end{displaymath}
We notice that the term
\begin{displaymath}
    C_{13}:=\int_{1}^{+\infty} - \frac{U_{min}}{2\beta_K^3\|b^K\|_\mathcal{M}^3 t^2}( 2t^{2/3}\beta_K\langle\mathcal{M}_K b^K,\Tilde{x}^0\rangle + \|\Tilde{x}^0\|_\mathcal{M}^2)\ \ud t
\end{displaymath}
is constant and finite. Using Hardy and Cauchy-Schwartz inequalities we also have
\begin{displaymath}
    \begin{split}
        -\int_{1}^{+\infty} \frac{U_{min}}{\beta_K^3\|b^K\|_\mathcal{M}^3 t^2} \langle \mathcal{M}_K\varphi(t),\Tilde{x}^0\rangle\ \ud t &= -\frac{2}{9} \int_{1}^{+\infty} \frac{1}{t^2} \langle \mathcal{M}_K\varphi(t),\Tilde{x}^0\rangle\ \ud t\\
        & \geq - \frac{2}{9} \bigg(\int_{1}^{+\infty} \frac{\|\varphi(t)\|_\mathcal{M}^2}{t^2}\ \ud t\bigg)^{1/2}\bigg( \int_{1}^{+\infty} \frac{\|\Tilde{x}^0\|_\mathcal{M}^2}{t^2}\ \ud t\bigg)^{1/2} \\
        & \geq -C_{14} \|\varphi\|_\mathcal{D},
    \end{split}
\end{displaymath}
where $C_{14}:= \frac{8}{9}\big(\int_{1}^{+\infty} \frac{\|\Tilde{x}^0\|_\mathcal{M}^2}{t^2}\ \ud t\big)^{1/2} < +\infty$. Again by Hardy inequality, we obtain
\begin{displaymath}
        \mathcal{A}_K(\varphi) \geq \frac{1}{18}\|\varphi\|_\mathcal{D}^2- C_{14}\|\varphi\|_\mathcal{D}+C_{13},
\end{displaymath}
which implies that the functional $\mathcal{A}_K$ is coercive.\\

We now focus on studying the terms
\begin{displaymath}
\begin{split}
    \mathcal{A}_{K_1,K_2}(\varphi) := \sum_{i\in K_1,\ j\in K_2} &\int_{1}^{+\infty}\frac{1}{2M_{K_{1,2}}} m_i m_j|\dot{\varphi}_i(t)-\dot{\varphi}_j(t)|^2+ \frac{m_i m_j}{|\varphi_{ij}(t)+a_{ij}t+\beta_{K_{1,2}} b^{K_{1,2}}_{ij}t^{2/3}+\Tilde{x}^0_{ij}|} - \frac{m_i m_j}{|a_{ij}t|}\ \ud t.
\end{split} 
\end{displaymath}

\begin{rem}
    We notice that if two bodies of the configuration $b^{K_{1,2}}$  belong to different clusters and have collisions, that is, if there are $i\in K_1$ and $j\in K_2$ such that $b^{K_{1,2}}_i=b^{K_{1,2}}_j$, then the functional reads
    \begin{displaymath}
        \mathcal{A}_{K_1,K_2}(\varphi) = \sum_{i\in K_1,\ j\in K_2} \int_{1}^{+\infty} \frac{1}{2M_{K_{1,2}}} m_i m_j|\dot{\varphi}_i(t)-\dot{\varphi}_j(t)|^2+\frac{m_i m_j}{|\varphi_{ij}(t)+a_{ij}t+\Tilde{x}^0_{ij}|} - \frac{m_i m_j}{|a_{ij}t|}\ \ud t.
    \end{displaymath}
    Since $a_i\neq a_j$ when $i\in K_1$, $j\in K_2$ and $K_1\neq K_2$, we have already proved that in this case the action functional $\mathcal{A}$ is coercive.
\end{rem}

Assuming $b^{K_{1,2}}$ without collisions, we proceed in the following way. By the triangular inequality, we have
\begin{displaymath}\begin{split}
    & \int_{1}^{+\infty} \frac{1}{|\varphi_{ij}(t)+a_{ij}t+\beta_{K_{1,2}}b^{K_{1,2}}_{ij}t^{2/3}+\Tilde{x}^0_{ij}|} - \frac{1}{|a_{ij}t|}\ \ud t \\
    &\geq \int_{1}^{+\infty} \frac{1}{\|\varphi_{ij}\|_\mathcal{D} t^{1/2}+|a_{ij}|t+\beta_{K_{1,2}}|b^{K_{1,2}}_{ij}|t^{2/3} + |\Tilde{x}^0_{ij}|} - \frac{1}{|a_{ij}|t}\ \ud t.
\end{split}    
\end{displaymath}
Using the changes of variables $s=\|\varphi\|_\mathcal{D}u$, we obtain
\begin{displaymath}\begin{split}
    &\int_{1}^{+\infty} \frac{1}{\|\varphi_{ij}\|_\mathcal{D} t^{1/2}+|a_{ij}|t+\beta_{K_{1,2}}|b^{K_{1,2}}_{ij}|t^{2/3} + |\Tilde{x}^0_{ij}|} - \frac{1}{|a_{ij}|t}\ \ud t\\
    & = 2 \int_{1}^{+\infty} \bigg(\frac{1}{\|\varphi_{ij}\|_\mathcal{D} s+|a_{ij}|s^2+\beta_{K_{1,2}}|b^{K_{1,2}}_{ij}|s^{4/3} + |\Tilde{x}^0_{ij}|} - \frac{1}{|a_{ij}|s^2}\bigg)s\ \ud s\\
    & = \frac{2}{\|\varphi\|_\mathcal{D}|a_{ij}|} \int_{1}^{+\infty} \bigg(\frac{1}{s^2+\frac{\beta_{K_{1,2}}|b^{K_{1,2}}_{ij}|}{|a_{ij}|}\frac{s^{4/3}}{\|\varphi\|_\mathcal{D}^{2/3}}+\frac{s}{|a_{ij}|\|\varphi\|_\mathcal{D}}+\frac{|\Tilde{x}^0_{ij}|}{|a_{ij}|\|\varphi\|_\mathcal{D}}} -\frac{1}{\frac{s^2}{\|\varphi\|_\mathcal{D}^2}}\bigg)s\ \ud s\\
    & = \frac{2}{|a_{ij}|} \int_{1/\|\varphi\|_\mathcal{D}}^{+\infty}\bigg( \frac{1}{u^2+\frac{\beta_{K_{1,2}}|b^{K_{1,2}}_{ij}|}{|a_{ij}|}\frac{u^{4/3}}{\|\varphi\|_\mathcal{D}^{2/3}}+\frac{u}{|a_{ij}|}+\frac{|\Tilde{x}^0_{ij}|}{|a_{ij}|\|\varphi\|_\mathcal{D}}} -\frac{1}{u^2}\bigg)u\ \ud u.
\end{split}    
\end{displaymath}
We can observe that for $\|\varphi\|_\mathcal{D}\rightarrow+\infty$, we have $\frac{\beta_{K_{1,2}}|b^{K_{1,2}}_{ij}|}{|a_{ij}|\|\varphi\|_\mathcal{D}^{2/3}}\leq 1$ and $\frac{|\Tilde{x}^0_{ij}|}{|a_{ij}|\|\varphi\|_\mathcal{D}} \leq 1$. So, for $\|\varphi\|_\mathcal{D}\rightarrow+\infty$, it follows
\begin{displaymath}\begin{split}
    & \frac{2}{|a_{ij}|} \int_{1/\|\varphi\|_\mathcal{D}}^{+\infty}\bigg( \frac{1}{u^2+\frac{\beta_{K_{1,2}}|b^{K_{1,2}}_{ij}|}{|a_{ij}|}\frac{u^{4/3}}{\|\varphi\|_\mathcal{D}^{2/3}}+\frac{u}{|a_{ij}|}+\frac{|\Tilde{x}^0_{ij}|}{|a_{ij}|\|\varphi\|_\mathcal{D}}} -\frac{1}{u^2}\bigg)u\ \ud u \\
    & \geq \frac{2}{|a_{ij}|} \int_{1/\|\varphi\|_\mathcal{D}}^{+\infty}\bigg( \frac{1}{u^2+u^{4/3}+\frac{u}{|a_{ij}|}+1} -\frac{1}{u^2}\bigg)u\ \ud u\\
    & = \frac{2}{|a_{ij}|} \int_{1/\|\varphi\|_\mathcal{D}}^{+\infty} \frac{1}{u} \bigg( \frac{1}{1+u^{-2/3}+\frac{u^{-1}}{|a_{ij}|}+u^{-1}} -1 \bigg)\ \ud u.
\end{split}    
\end{displaymath}
Since $1/\|\varphi\|_\mathcal{D}\leq1$ when $\|\varphi\|_\mathcal{D}\rightarrow+\infty$, we can study the integral separately on the intervals $[1/\|\varphi\|_\mathcal{D},1]$ and $[1,+\infty)$. On the second interval, the integral is constant (let us say that it is equal to a constant $C_{15})$. On the other interval, we have
\begin{displaymath}\begin{split}
    & \frac{2}{|a_{ij}|} \int_{1/\|\varphi\|_\mathcal{D}}^{1} \frac{1}{u} \bigg( \frac{1}{1+u^{-2/3}+\frac{u^{-1}}{|a_{ij}|}+u^{-1}} -1 \bigg)\ \ud u \geq \frac{2}{|a_{ij}|} \int_{1/\|\varphi\|_\mathcal{D}}^{1} -\frac{\ud u}{u}. 
\end{split}    
\end{displaymath}
We have thus demonstrated that 
\begin{displaymath}
    \begin{split}
        \int_{1}^{+\infty} \frac{1}{|\varphi_{ij}(t)+a_{ij}t+\beta_{K_{1,2}}b^{K_{1,2}}_{ij}t^{2/3}+\Tilde{x}^0_{ij}|} - \frac{1}{|a_{ij}t|}\ \ud t \geq \frac{2}{|a_{ij}|} \log{\frac{1}{\|\varphi\|_\mathcal{D}}}+C_{15} = -\frac{2}{|a_{ij}|}\log{\|\varphi\|_\mathcal{D}}+C_{15},
    \end{split}
\end{displaymath}
which concludes the proof of the coercivity of the Lagrangian action.

\subsection{Weak lower semicontinuity of $\mathcal{A}(\varphi)$}

In order to prove the weak lower semicontinuity of the Lagrangian action, we can use the decomposition \eqref{action_partially_hyperbolic} and study the weak lower semicontinuity of the terms $\mathcal{A}_K$ and $\mathcal{A}_{K_1,K_2}$ separately, given arbitrary clusters $K,K_1,K_2\in\mathcal{P}$.

Concerning the term $\mathcal{A}_K$, we can refer to Section \ref{sec_parabolic}, since our choice of $\beta_K b^K$ leads us to the same computations.

For the proof of the weak lower semicontinuity of the terms $\mathcal{A}_{K_1,K_2}$, let us consider a sequence $(\varphi^n)_n\subset\mathcal{D}_0^{1,2}(1,+\infty)$ converging weakly in $\mathcal{D}_0^{1,2}(1,+\infty)$ to some $\varphi$, as $n\rightarrow+\infty$. It follows that there is a constant $k\in\R$ such that $\|\varphi^n\|_\mathcal{D}\leq k$ and $\|\varphi\|_\mathcal{D}\leq k$ for every $n\in\N$. We would like to use the inequality
\begin{equation}\label{alternative_proof_second_equality_with_s}
    \frac{1}{|\varphi^n_{ij}(t) + a_{ij}t + \beta_{K_{1,2}}b^{K_{1,2}}_{ij}t^{2/3} + \Tilde{x}^0_{ij}|} - \frac{1}{|a_{ij}t|} = \int_{0}^{1} \frac{\ud}{\ud s}\bigg[\frac{1}{|a_{ij}t + s(\varphi^n_{ij}(t) + \beta_{K_{1,2}}b^{K_{1,2}}_{ij}t^{2/3} + \Tilde{x}^0_{ij})|}\bigg]\ \ud s,
\end{equation}
which holds true when the denominator of the integrand is not zero. For all $s\in(0,1)$ we have
\begin{displaymath}\begin{split}
     |a_{ij}t + s(\varphi^n_{ij}(t) + \beta_{K_{1,2}}b^{K_{1,2}}_{ij}t^{2/3} + \Tilde{x}^0_{ij})| & \geq |a_{ij}|t - s(\|\varphi^n_{ij}\|_\mathcal{D}t^{1/2} + |\beta_{K_{1,2}}b^{K_{1,2}}_{ij}|t^{2/3} + |\Tilde{x}^0_{ij}|) \\
     & > |a_{ij}|t - (\|\varphi^n_{ij}\|_\mathcal{D}t^{1/2} + |\beta_{K_{1,2}}b^{K_{1,2}}_{ij}|t^{2/3} + |\Tilde{x}^0_{ij}|),
\end{split}\end{displaymath}
and since $\|\varphi^n_{ij}\|_\mathcal{D}\leq k$, we have
\begin{displaymath}
    |a_{ij}t + s(\varphi^n_{ij}(t) + \beta_{K_{1,2}}b^{K_{1,2}}_{ij}t^{2/3} + \Tilde{x}^0_{ij})| > |a_{ij}|t - (k t^{1/2} + |\beta_{K_{1,2}}b^{K_{1,2}}_{ij}|t^{2/3} + |\Tilde{x}^0_{ij}|),
\end{displaymath}
where the last term is larger then zero if $t\geq\bar{T}=\bar{T}(k)$, for a proper $\bar{T}$. We can thus study the weak lower semicontinuity of the potential term separately on the two intervals $[1,\bar{T}]$ and $[\bar{T},+\infty)$.

On $[1,\bar{T}]$, the weak lower semicontinuity easily follows from Fatou's Lemma. On $[\bar{T},+\infty)$, we can use \eqref{alternative_proof_second_equality_with_s}:
\begin{displaymath}\begin{split}
    &\int_{\bar{T}}^{+\infty} \frac{1}{|\varphi^n_{ij}(t) + a_{ij}t + \beta_{K_{1,2}}b^{K_{1,2}}_{ij}t^{2/3} + \Tilde{x}^0_{ij}|} - \frac{1}{|a_{ij}t|}\ \ud t \\
    & = \int_{\bar{T}}^{+\infty} \bigg( \int_{0}^{1} -\frac{[a_{ij}t + s(\varphi^n_{ij}(t) + \beta_{K_{1,2}}b^{K_{1,2}}_{ij}t^{2/3} + \Tilde{x}^0_{ij})](\varphi^n_{ij}(t) + \beta_{K_{1,2}}b^{K_{1,2}}_{ij}t^{2/3} + \Tilde{x}^0_{ij})}{|a_{ij}t + s(\varphi^n_{ij}(t) + \beta_{K_{1,2}}b^{K_{1,2}}_{ij}t^{2/3} + \Tilde{x}^0_{ij})|^3}\ \ud s \bigg)\ \ud t.
\end{split}\end{displaymath}
Using \eqref{alternative_proof_inequality}, we then have
\begin{displaymath}
    \begin{split}
    &\int_{\bar{T}}^{+\infty}\bigg| \frac{1}{|\varphi^n_{ij}(t) + a_{ij}t + \beta_{K_{1,2}}b^{K_{1,2}}_{ij}t^{2/3} + \Tilde{x}^0_{ij}|} - \frac{1}{|a_{ij}t|}\bigg|\ \ud t \\
    & \leq \int_{\bar{T}}^{+\infty} \bigg( \int_{0}^{1} \frac{|\varphi^n_{ij}(t) + \beta_{K_{1,2}}b^{K_{1,2}}_{ij}t^{2/3} + \Tilde{x}^0_{ij}|}{|a_{ij}t + s(\varphi^n_{ij}(t) + \beta_{K_{1,2}}b^{K_{1,2}}_{ij}t^{2/3} + \Tilde{x}^0_{ij})|^2}\ \ud s\bigg)\ \ud t\\
    & \leq \int_{\bar{T}}^{+\infty} \bigg( \int_{0}^{1} \frac{3(|kt^{1/2} + \beta_{K_{1,2}}b^{K_{1,2}}_{ij}t^{2/3}| + |\Tilde{x}^0_{ij}|)}{|a_{ij}t|^2 - s|kt^{1/2} + \beta_{K_{1,2}}b^{K_{1,2}}_{ij}t^{2/3} + \Tilde{x}^0_{ij}|^2}\ \ud s\bigg)\ \ud t\\
    & \leq \int_{\bar{T}}^{+\infty} \bigg( \int_{0}^{1} \frac{3k't^{2/3}}{|a_{ij}|^2 t^2 - sk't^{4/3}}\ \ud s\bigg)\ \ud t,
    \end{split}
\end{displaymath}
where $k'\in\R$ is big enough so that $|kt^{1/2}| + |\beta_{K_{1,2}}b^{K_{1,2}}_{ij}t^{2/3} + \Tilde{x}^0_{ij}|\leq \sqrt{k'}t^{2/3}$. The denominator of the last integral is positive when 
\begin{displaymath}
    t>\bigg(\frac{|a_{ij}|^2}{k'}\bigg)^{2/3}=:\hat{T}.
\end{displaymath}
If we choose $\bar{T}(k)\gg\hat{T}$, the last integral is finite, which means that
\begin{displaymath}
    \int_{\bar{T}}^{+\infty}\bigg| \frac{1}{|\varphi^n_{ij}(t) + a_{ij}t + \beta_{K_{1,2}}b^{K_{1,2}}_{ij}t^{2/3} + \Tilde{x}^0_{ij}|} - \frac{1}{|a_{ij}t|}\bigg|\ \ud t < +\infty.
\end{displaymath}
This implies the $L^1$-convergence of the potential term, which proves its weak lower semicontinuity.

\subsection{The action is of class $C^1$ over non-collision sets}

The last thing we have to prove is that the action is of class $C^1$ over sets of motions that don't undergo collisions. We have already proved this result for the terms $\mathcal{A}_K$, so we can only focus on the terms $\mathcal{A}_{K_1,K_2}$. In particular, denoting by $\mathcal{A}_{K_1,K_2}^2$ the potential term, we wish to prove that the differential 
\begin{displaymath}
    \ud \mathcal{A}_{K_1,K_2}(\varphi)[\psi] = \int_{1}^{+\infty} \langle \nabla U(\varphi(t)+at+\beta_{K_{1,2}} b^{K_{1,2}}t^{2/3}+\Tilde{x}^0), \psi(t)\rangle\ \ud t
\end{displaymath}
is continuous, for every $\varphi,\psi\in\mathcal{D}_0^{1,2}(1,+\infty)$, over the set of non-collisional configurations when the potential $U$ is restricted to the clusters $K_1$ and $K_2$.

First of all, we have 
\begin{displaymath}
    \|\nabla U(\varphi(t)+at+\beta_{K_{1,2}} b^{K_{1,2}}t^{2/3}+\Tilde{x}^0)\|_\mathcal{M} \leq C_{16}\sum_{i\in K_1,\ j\in K_2}\frac{1}{|\varphi^n_{ij}(t) + a_{ij}t + \beta_{K_{1,2}}b^{K_{1,2}}_{ij}t^{2/3} + \Tilde{x}^0_{ij}|^2}
\end{displaymath}
for a proper constant $C_{16}$, where the right-hand side term behaves like $1/t^2$ when $t\rightarrow+\infty$. This, together with the Cauchy-Schwartz inequality, proves that the differential is well-defined.

Now, given $(\varphi^n)_n\subset\mathcal{D}_0^{1,2}(1,+\infty)$ such that $\varphi^n\rightarrow\varphi$ in $\mathcal{D}_0^{1,2}(1,+\infty)$ for some $\varphi$, we wish to prove that
\begin{displaymath}
    \sup_{\|\psi\|_\mathcal{D}\leq 1} \bigg|\int_{1}^{+\infty} \langle \nabla U(t,\varphi^n(t))-\nabla U(t,\varphi(t)),\psi(t)\rangle\ \ud t\bigg| \rightarrow0,\qquad\text{as }n\rightarrow+\infty,
\end{displaymath}
where we write $U(t,\varphi(t)):=U(\varphi(t)+at+\beta_{K_{1,2}} b^{K_{1,2}}t^{2/3}+\Tilde{x}^0)$ to lighten the notation. Using Cauchy-Schwartz and Hardy inequalities, we have
\begin{displaymath}
    \begin{split}
    &\sup_{\|\psi\|_\mathcal{D}\leq 1} \bigg|\int_{1}^{+\infty} \langle \nabla U(t,\varphi^n(t))-\nabla U(t,\varphi(t)),\psi(t)\rangle\ \ud t\bigg| \\
    &\leq \sup_{\|\psi\|_\mathcal{D}\leq 1} \int_{1}^{+\infty} t\| \nabla U(t,\varphi^n(t))-\nabla U(t,\varphi(t))\|_\mathcal{M} \frac{\|\psi(t)\|_\mathcal{M}}{t}\ \ud t\\
    &\leq \sup_{\|\psi\|_\mathcal{D}\leq 1} \bigg( \int_{1}^{+\infty} \frac{\|\psi(t)\|_\mathcal{M}^2}{t^2}\ \ud t\bigg)^{1/2} \bigg( \int_{1}^{+\infty} t^2 \| \nabla U(t,\varphi^n(t))-\nabla U(t,\varphi(t))\|_\mathcal{M}^2\ \ud t\bigg)^{1/2} \\
    & \leq 2 \bigg( \int_{1}^{+\infty} t^2 \| \nabla U(t,\varphi^n(t))-\nabla U(t,\varphi(t))\|_\mathcal{M}^2\ \ud t\bigg)^{1/2}.  
    \end{split}
\end{displaymath} 
Now, we can write
\begin{displaymath}\begin{split}
    &\int_{1}^{+\infty} t^2 \| \nabla U(t,\varphi^n(t))-\nabla U(t,\varphi(t))\|_\mathcal{M}^2\ \ud t \\
    & = \int_{1}^{+\infty} t^2 \bigg| \int_{0}^{1} \nabla^2 U(\varphi(t)+at+\beta_{K_{1,2}} b^{K_{1,2}}t^{2/3}+\Tilde{x}^0+s(\varphi^n(t)-\varphi(t)))(\varphi^n(t)-\varphi(t))\ \ud s\ \bigg|^2\ \ud t \\
    & \leq \int_{1}^{+\infty} t^2 \bigg( \int_{0}^{1} C_{16} \sum_{i\in K_1,\ j\in K_2}\frac{1}{|\varphi_{ij}(t) + a_{ij}t + \beta_{K_{1,2}}b^{K_{1,2}}_{ij}t^{2/3} + \Tilde{x}^0_{ij} + s(\varphi^n_{ij}(t)-\varphi_{ij}(t))|^3}\|\varphi^n(t)-\varphi(t)\|_\mathcal{M}\ \ud s\bigg)^2\ \ud t \\
    & \leq \int_{1}^{+\infty}  \bigg( \int_{0}^{1} C_{16} \sum_{i\in K_1,\ j\in K_2}\frac{1}{|\varphi_{ij}(t) + a_{ij}t + \beta_{K_{1,2}}b^{K_{1,2}}_{ij}t^{2/3} + \Tilde{x}^0_{ij} + s(\varphi^n_{ij}(t)-\varphi_{ij}(t))|^3}\|\varphi^n-\varphi\|_\mathcal{D} t^{3/2}\ \ud s\bigg)^2\ \ud t \\
    & \leq C_{17}\|\varphi^n-\varphi\|_\mathcal{D}
\end{split}\end{displaymath}
for a proper constant $C_{17}\in\R$, where the last term goes to zero as $n\rightarrow+\infty$. This concludes the proof.

\subsection{Absence of collisions and partial hyperbolicity of the motion}

Again, Marchal's Theorem implies that the motion we are considering has no collisions. Given
\begin{displaymath}
    x(t) = \varphi(t) + at + \beta b t^{2/3} + \Tilde{x}^0,
\end{displaymath}
we have
\begin{displaymath}
    \dot{x}(t) = \dot{\varphi}(t) + a + \frac{2}{3}\beta b t^{-1/3}.
\end{displaymath}
In this case, the conservation of the energy implies that the energy of the motion is positive.
\begin{rem}
    We observe that Chazy's Theorem can be applied to the cases of hyperbolic and hyperbolic-parabolic motions, because for completely parabolic motions the energy constant of the internal motion is null. In such cases, the limit shape of $x(t)$ is the shape of the configuration $a$ and, moreover, $L = \lim_{t\rightarrow+\infty}\frac{\max_{i<j}|x_{ij}(t)|}{\min_{i<j}|x_{ij}(t)|} < +\infty$ if and only if $x$ is hyperbolic. If the energy $h > 0$ and $L = +\infty$, then either the motion is hyperbolic-parabolic or it is not expansive.
\end{rem}
In our case, it is trivial to prove that $L=+\infty$, which implies that the motion is hyperbolic-parabolic.
\begin{rem}
    We can observe that if the indexes $i,j$ belong to the same cluster, we have $\dot{x}_{ij}(t)\rightarrow0$ when $t\rightarrow+\infty$, while if $i,j$ belong to different clusters, we have $\dot{x}_{ij}(t)\rightarrow a_{ij}$ when $t\rightarrow+\infty$.
\end{rem}

\subsection{Hyperbolic-parabolic motions' asymtptotic expansion}

We have seen that a hyperbolic-parabolic motion $x$ can be written in the form $x(t) = at + \beta bt^{2/3} + \varphi(t) + \Tilde{x}^0$, as shown in \eqref{explicit_hyperbolic-parabolic}, and that the bodies can be divided into subgroups following the natural cluster partition introduced in Definition \ref{def_cluster_partition}. In this section, we will prove that the centers of mass of the clusters follow hyperbolic orbits. Besides, we will show that inside each cluster, the bodies move with respect to the center of mass of the cluster following a parabolic path.

We start with proving that the centers of mass of each cluster have a hyperbolic expansion. For a cluster $K$, denoting the center of mass of $K$ as
\begin{displaymath}
    c^K(t) = \frac{1}{M_K}\sum_{i\in K}m_i x_i(t),
\end{displaymath}
we can compute the equations of motion of the center of mass as
\begin{displaymath}
    \begin{split}
        M_K \ddot{c}^K(t) & = \sum_{i\in K} m_i \ddot{x}_i(t) \\
        & = -\sum_{i\in K}\sum_{j\neq i} m_i m_j \frac{x_i(t) - x_j(t)}{|x_i(t) - x_j(t)|^3} \\
        & = -\sum_{i\in K}\sum_{j\notin K} m_i m_j \frac{x_i(t) - x_j(t)}{|x_i(t) - x_j(t)|^3}.
    \end{split}
\end{displaymath}
It is easy to see that the right-hand side of the equation is a $O\big(\frac{1}{t^2}\big)$-term for $t\rightarrow+\infty$. We also notice that
\begin{displaymath}
    -\sum_{i\in K}\sum_{j\notin K} m_i m_j \frac{x_i(t) - x_j(t)}{|x_i(t) - x_j(t)|^3} \simeq -\frac{1}{t^2}\sum_{i\in K}\sum_{j\notin K} m_i m_j \frac{a_i - a_j}{|a_i - a_j|^3} + O\bigg(\frac{1}{t^3}\bigg),
\end{displaymath}
for $t\rightarrow+\infty$. We can define 
\begin{displaymath}
    \Tilde{\nabla}U(a^K) = -\sum_{i\in K}\sum_{j\notin K} m_i m_j \frac{a_i - a_j}{|a_i - a_j|^3},
\end{displaymath}
which can be seen as a restriction of $\nabla U(a^K)$. Denoting with $a^K$ the restriction of the configuration $a$ to the cluster $K$, we can thus compute
\begin{displaymath}
    \lim_{t\rightarrow+\infty} \frac{M_K c^K(t)}{\log t} = \lim_{t\rightarrow+\infty} \frac{M_K \dot{c}^K(t)}{\frac{1}{t}} = -\lim_{t\rightarrow+\infty} \frac{M_K \ddot{c}^K(t)}{\frac{1}{t^2}} = -\Tilde{\nabla}U(a^K).
\end{displaymath}
This implies that the center of mass of the cluster $K$ has the hyperbolic asymptotic expansion
\begin{displaymath}
    c^K(t) = a^K t - \Tilde{\nabla}U(a^K)\log t + o(\log t),
\end{displaymath}
for $t\rightarrow +\infty$.

Now, considering an index $i\in K$, we denote the motion of a body $x_i$ with respect to the center of mass of its cluster as
\begin{displaymath}
    y_i(t) = x_i(t) - c^K_i(t).
\end{displaymath}
We are going to show that its asymptotic expansion is a parabolic one. 

If the cluster only has one element, we obviously have $y_i\equiv0$, so we consider the case where $K$ has two or more elements. The equation of motion reads
\begin{displaymath}
    \begin{split}
        m_i \Ddot{y}_i(t) & = m_i \Ddot{x}_i(t) - m_i\Ddot{c}^K_i(t) \\
        & = -\sum_{j\in K} m_i m_j \frac{x_i(t) - x_j(t)}{|x_i(t) - x_j(t)|^3} - \sum_{j\notin K} m_i m_j \frac{x_i(t) - x_j(t)}{|x_i(t) - x_j(t)|^3} - m_i \Ddot{c}^K_i(t).
    \end{split}
\end{displaymath}
Since we already know that $-\sum_{j\notin K} m_i m_j \frac{x_i(t) - x_j(t)}{|x_i(t) - x_j(t)|^3} - m_i \Ddot{c}^K_i(t) = O\big(\frac{1}{t^2}\big)$ for $t\rightarrow+\infty$, we can then say that
\begin{displaymath}
    m_i \Ddot{y}_i(t) = -\sum_{j\in K} m_i m_j \frac{y_i(t) - y_j(t)}{|y_i(t) - y_j(t)|^3} + O\big(\frac{1}{t^2}\big).
\end{displaymath}

Using the definition of $x(t)$ and the asymptotic expansion of $c^K(t)$ we found above, we can easily see that
\begin{displaymath}
    y_i(t) = \beta_K b^K_i t^{2/3} + \varphi_i(t) - \log t\sum_{j\notin K} m_i m_j \frac{a_i - a_j}{|a_i - a_j|^3} + o(\log t),
\end{displaymath}
for $t\rightarrow+\infty$, where $\beta_K = \sqrt[3]{\frac{9}{2}\min_{K} U}$. Defining $\psi_i(t):=\varphi_i(t) - \sum_{j\notin K} m_i m_j \frac{a_i - a_j}{|a_i - a_j|^3} + o(\log t)$, it is easy to prove that $\psi_i\in\mathcal{D}^{1,2}(1,+\infty)$. We can then apply the estimate \eqref{new_estimate_parabolic} to say that
\begin{displaymath}
    y_i(t) = \beta_K b_i^K t^{2/3} + o(t^{\frac{1}{3}+}),
\end{displaymath}
for $t\rightarrow+\infty$.

\section{Free-time minimization property}

``Jacobi's principle brings out vividly the intimate relationship which exists between the motions of conservative holonomic systems and the geometry of curved space'' (C. Lanczos, \cite[page 138]{Lanczos}). Accordingly, trajectories of the $N$-body problem at energy $h$ are geodesics of the Jacobi-Maupertuis' metric of level $h$ in the configuration space, i.e.,
\[\ud\sigma^2=(U+h)\ud s_\mathcal{M}^2,
\]
being $\ud s_\mathcal{M}^2$ the mass Euclidean metric in the configuration space. 
\begin{defn}
    A curve $x:[1, +\infty) \rightarrow E^N$ is said to be a geodesic ray from $p \in E^N$ if $x(1) = p$ and each restriction to a compact interval is a minimizing geodesic.
\end{defn}
In \cite{MadernaVenturelli_HyperbolicMotions}, Maderna and Venturelli also proved the following theorem.

\begin{thm}[Maderna-Venturelli, 2020 \cite{MadernaVenturelli_HyperbolicMotions}]\label{thm_MV_geodesicrays}
    Let $E$ be an Euclidean space. For any $h>0$, $p \in E^N$ and $a \in \Omega$, there is geodesic ray of the Jacobi-Maupertuis' metric of level $h$ with asymptotic direction $a$ and starting at $p$.
\end{thm}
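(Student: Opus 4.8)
The plan is to derive Theorem~\ref{thm_MV_geodesicrays} from Theorem~\ref{thm_hyperbolic} together with the free-time minimization property stated in the Corollary above, through the classical Maupertuis--Jacobi correspondence between energy-$h$ free-time minimizers of the Lagrangian action and minimizing geodesics of the metric $\ud\sigma^{2}=(U+h)\,\ud s_{\mathcal M}^{2}$. First I would reduce to the $N$-body configuration space $\mathcal X$: by translation invariance of \eqref{eq_newton} and of the Jacobi metric we may assume the centre of mass is at the origin, and we normalize the prescribed direction by setting $\tilde a:=\sqrt{2h}\,a/\|a\|_{\mathcal M}\in\Omega$, which represents the same direction as $a$ and satisfies $\tfrac12\|\tilde a\|_{\mathcal M}^{2}=h$, the energy of the corresponding hyperbolic motion by Remark~\ref{rem_lim_hyp}. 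Applying Theorem~\ref{thm_hyperbolic} with initial configuration $p$ and asymptotic velocity $\tilde a$ yields a motion $x:[1,+\infty)\to\mathcal X$ with $x(1)=p$, energy $h$, collisionless for $t>1$, and --- by the Corollary --- a free-time minimizer of $\mathcal A_{L+h}(\gamma):=\int (L(\gamma,\dot\gamma)+h)\,\ud t$ at energy level $h$; this minimality is inherited by every subarc $x\vert_{[t_1,t_2]}$ via the standard cut-and-paste argument.

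The second step is the Maupertuis estimate. Pointwise one has $\tfrac12\|\dot\gamma\|_{\mathcal M}^{2}+U(\gamma)+h\ \ge\ \sqrt2\,\sqrt{U(\gamma)+h}\,\|\dot\gamma\|_{\mathcal M}$, with equality exactly when $\|\dot\gamma\|_{\mathcal M}^{2}=2(U(\gamma)+h)$, i.e.\ when $\gamma$ carries energy $h$; hence $\mathcal A_{L+h}(\gamma)\ge\sqrt2\int_\gamma\ud\sigma$, and the infimum of $\mathcal A_{L+h}$ over the travel time of a curve with fixed endpoints equals $\sqrt2$ times its Jacobi length (the reparametrization to energy $h$ being admissible since $U+h\ge h>0$). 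Since $x$ carries energy $h$, equality holds along each arc of $x$; combining this with the free-time minimality of $x\vert_{[t_1,t_2]}$ gives $\int_{x\vert_{[t_1,t_2]}}\ud\sigma\le\int_{c}\ud\sigma$ for every curve $c$ in $\mathcal X$ joining $x(t_1)$ to $x(t_2)$ (with right-hand side $+\infty$ when $c$ meets $\Delta$, so the inequality is trivial there). Thus, once reparametrized by $\sigma$-arclength, every arc of $x$ is a minimizing geodesic of the Jacobi--Maupertuis metric.

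Third, I would globalize the reparametrization. Put $\ell(t):=\int_{1}^{t}\sqrt{2\big(U(x(\tau))+h\big)}\,\|\dot x(\tau)\|_{\mathcal M}\,\ud\tau$, the Jacobi length from $p$ to $x(t)$ up to the factor $\sqrt2$. The integrand is strictly positive and, since the motion is expansive, $U(x(t))\to0$ and $\|\dot x(t)\|_{\mathcal M}^{2}\to2h$, so $\ell(t)\sim 2h\,t$ and $\ell$ is a strictly increasing homeomorphism of $[1,+\infty)$ onto $[0,+\infty)$. Setting $\hat x(s):=x\big(\ell^{-1}(s-1)\big)$ for $s\in[1,+\infty)$ we get $\hat x(1)=p$, and by the previous step each restriction $\hat x\vert_{[s_1,s_2]}$ is a minimizing geodesic, i.e.\ $\hat x$ is a geodesic ray from $p$. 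Finally, from $x(t)=\tilde a\,t-\log t\,\nabla U(\tilde a)+o(1)$ (Theorem~\ref{thm_hyperbolic}, equivalently Theorem~\ref{art2_thm_chazy}) we get $x(t)/\|x(t)\|_{\mathcal M}\to\tilde a/\|\tilde a\|_{\mathcal M}=a/\|a\|_{\mathcal M}$ as $t\to+\infty$; since $\ell^{-1}(s-1)\to+\infty$ as $s\to+\infty$, the ray $\hat x$ has asymptotic direction $a$, as required.

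The substantive work is already contained in Theorem~\ref{thm_hyperbolic} and the Corollary, so what remains is essentially classical. The points that demand some care are: (i) making the Maupertuis equivalence precise when the competing curves are only absolutely continuous and the conformal factor $U+h$ is unbounded near the collision set $\Delta$ --- handled by noting that competitors meeting $\Delta$ have infinite $\sigma$-length while $x$ has finite $\sigma$-length on compacts; (ii) the behaviour near $t=1$ when $p\in\Delta$, where $x$ starts at a collision but, as in the Corollary, the $\sigma$-length stays finite (the Kepler-type bound $r(t)\gtrsim(t-1)^{2/3}$ makes $\ud\sigma$ integrable at $t=1$); and (iii) fixing the meaning of ``asymptotic direction'' of a geodesic ray and checking its invariance under the reparametrization $t\mapsto s$, which is immediate. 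I expect (i) to be the main technical obstacle, although it is standard material.
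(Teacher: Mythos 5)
Your proposal is essentially correct, but note that the paper does not prove Theorem~\ref{thm_MV_geodesicrays} at all: it is imported verbatim from \cite{MadernaVenturelli_HyperbolicMotions}, where the original argument runs through viscosity solutions of the Hamilton--Jacobi equation, H\"older estimates on the action potential and calibrated curves of Busemann-type functions. What you do instead is re-derive the statement from the paper's own results --- Theorem~\ref{thm_hyperbolic} applied to the rescaled direction $\tilde a=\sqrt{2h}\,a/\|a\|_{\mathcal M}$ (so that Remark~\ref{rem_lim_hyp} gives energy $h$), the free-time minimization Corollary, and the classical Maupertuis--Jacobi correspondence between energy-$h$ free-time minimizers and minimizing geodesics of $\ud\sigma^2=(U+h)\,\ud s_{\mathcal M}^2$. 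This is exactly the logical content of the Corollary at the end of Section 6 (``Then $x$ is actually a free-time minimizer at its energy level. Therefore it is a geodesic ray''), so your route is the one the authors themselves indicate when they say their results ``agree with'' Theorem~\ref{thm_MV_geodesicrays}; you simply make explicit the AM--GM step $\tfrac12\|\dot\gamma\|_{\mathcal M}^2+U+h\ge\sqrt{2}\sqrt{U+h}\,\|\dot\gamma\|_{\mathcal M}$, the arclength reparametrization, and the identification of the asymptotic direction via the Chazy expansion, none of which the paper writes out (it cites \cite{Lanczos,MR2269239} for the equivalence). Two small points of care: the paper's proof of the Corollary only treats subarcs anchored at $t=1$, so your appeal to the ``standard cut-and-paste argument'' to get minimality of interior subarcs $[t_1,t_2]$ is genuinely needed and should be spelled out; and your handling of a collisional starting point $p\in\Delta$ (integrability of $\ud\sigma$ at $t=1$ via the Sundman-type estimate $r(t)\gtrsim(t-1)^{2/3}$) is the right fix for an issue the paper glosses over. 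What your approach buys is a self-contained existence proof of geodesic rays within the variational framework of this paper; what it does not recover is the additional structure of the Maderna--Venturelli construction (the global viscosity solution whose calibrated curves these rays are), which the paper only reconnects with in Section~\ref{sec:HJ}.
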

In order to relate geodesics of the Jacobi-Maupertuis' metric to the action minimizing trajectories of our Lagrangian systems we need the following definition.
\begin{defn}
    A curve $x:I\rightarrow\mathcal{X}$ is a free-time minimizer  for the Lagrangian action at energy $h$ if $\forall\ [a,b],[a',b']\subset I$ and $\forall\ \sigma:[a',b']\rightarrow\mathcal{X}$ such that $\gamma(a)=\sigma(a')$ and $\gamma(b)=\sigma(b')$, it holds 
    \begin{displaymath}
        \int_{a}^{b} L(\gamma,\dot{\gamma})\ \ud t+h(b-a) \leq \int_{a'}^{b'} L(\sigma,\dot{\sigma})\ \ud t+h(b'-a').
    \end{displaymath}
\end{defn}
In light of the equivalence between the variational property of being an unbouded free-time minimizer of the Lagrangian action at energy $h$ and the geometrical property of being a geodesic ray for the Jacobi-Mapertuis metric at the same energy level (cfr. \cite{Lanczos,MR2269239}), we show here that our existence results of expansive motions through the minimization of the renormalized action do indeed agree with Theorem \ref{thm_MV_geodesicrays}. More precisely, we prove the following corollary.
\begin{cor}
    Consider an expansive motion $x:[1,+\infty)\rightarrow\mathcal{X}$ of the Newtonian $N$-body problem of the form 
    \begin{displaymath}
        x(t)=r_0(t) + \varphi(t)+\Tilde x_0,
    \end{displaymath}
    where $\varphi\in\mathcal{D}_0^{1,2}(1,+\infty)$ minimizes the renormalized action in \eqref{eq:renorm_action} in any of the settings of Theorems \ref{thm_hyperbolic}, \ref{thm_parabolic} and \ref{thm_partially_hyperbolic}. Then $x$ is actually a free-time minimizer at its energy level. Therefore it is a geodesic ray for the Jacobi-Maupertuis' metric.
\end{cor}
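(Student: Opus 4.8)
The plan is to exploit the fact that the renormalized action differs from the genuine Lagrangian action by terms that are, on every compact subinterval, constant with respect to variations of $\varphi$ fixing the endpoints, together with the conservation of energy. First I would fix $[a,b]\subset[1,+\infty)$ and a competitor curve $\sigma$ joining $x(a)$ to $x(b)$ in time $b-a$. I would write $\sigma(t)=r_0(t)+\psi(t)+\tilde x_0$ for a suitable $H^1$ function $\psi$ on $[a,b]$ and compare $\int_a^b L(\sigma,\dot\sigma)\,\ud t$ with $\int_a^b L(x,\dot x)\,\ud t$. Since the renormalization added to the Lagrangian consists of the terms $-U(r_0)-\langle\mathcal M\ddot r_0,\varphi\rangle$ (respectively the analogous cluster-by-cluster corrections in the hyperbolic-parabolic case), which can be rewritten, after an integration by parts, as a boundary term plus $-\langle\mathcal M\dot r_0,\dot\varphi\rangle$-type contributions that integrate to a fixed quantity depending only on the endpoint values of $\varphi$, the minimality of $\varphi$ for $\mathcal A^{ren}$ on $\mathcal D_0^{1,2}$ forces $x$ to be a minimizer, in every interval $[a,b]$, of the true action $\mathcal A_L$ among all curves with the same endpoints and the same transit time; this is exactly the local minimality already used in Section 4 to invoke Marchal's theorem.

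Next I would upgrade from fixed-time minimality to free-time minimality at the energy level $h$. The key tool is the standard fact (Maupertuis / Tonelli) that for a curve solving the Euler–Lagrange equations with energy $h$, one has $L(x,\dot x)+h=\|\dot x\|_\mathcal M^2$ along the curve and, more to the point, that a fixed-endpoints minimizer with energy $h$ also minimizes $\int L\,\ud t+h\,T$ over \emph{all} transit times $T$: indeed, for any competitor $\sigma\in\mathcal C(x(a),x(b),T')$ one estimates $\int_{a'}^{b'}L(\sigma,\dot\sigma)\,\ud t+hT'\ge \int_{a'}^{b'}\bigl(2\sqrt{(U(\sigma)+h)}\,\sqrt{\tfrac12\|\dot\sigma\|_\mathcal M^2}\bigr)\ud t = \int 2\sqrt{U(\sigma)+h}\;\ud s_\mathcal M$, i.e. the Jacobi–Maupertuis length, by the AM–GM inequality $\tfrac12\|\dot\sigma\|^2_\mathcal M+(U(\sigma)+h)\ge \sqrt{2(U(\sigma)+h)}\,\|\dot\sigma\|_\mathcal M$, with equality precisely when the competitor is parametrized so that $\tfrac12\|\dot\sigma\|^2_\mathcal M=U(\sigma)+h$. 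Since $x$ has constant energy $h$ (this is Remark \ref{rem_lim_hyp} in the hyperbolic case, $h=0$ in the parabolic case, and the analogous computation in Section \ref{sec_hyperbolic_parabolic}), it realizes the Jacobi–Maupertuis length of the arc it traces; hence it minimizes $\int L\,\ud t+h\,T$ over all reparametrizations and all competitors, which is the definition of a free-time minimizer. Combining this with the local fixed-time minimality from the previous paragraph yields the claim on every pair of subintervals.

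Finally, the geodesic-ray conclusion is immediate: by the cited equivalence (Lanczos, \cite{MR2269239}) between free-time minimizers of the Lagrangian action at energy $h$ and minimizing geodesics of the metric $\ud\sigma^2=(U+h)\ud s_\mathcal M^2$, each restriction of $x$ to a compact interval is a minimizing Jacobi–Maupertuis geodesic, and since $x(1)=x^0$, the curve $x$ is a geodesic ray from $x^0$ in the sense of the definition preceding Theorem \ref{thm_MV_geodesicrays}; this is consistent with, and in the hyperbolic case recovers, that theorem.

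I expect the main obstacle to be the first step: carefully checking that subtracting the renormalizing terms from the Lagrangian genuinely does not affect the fixed-endpoint minimization on compact intervals. The subtlety is that $-\langle\mathcal M\ddot r_0,\varphi\rangle$ is a zeroth-order term in $\varphi$, not a total derivative, so one must integrate by parts, $\int_a^b\langle\mathcal M\ddot r_0,\varphi\rangle\,\ud t=\bigl[\langle\mathcal M\dot r_0,\varphi\rangle\bigr]_a^b-\int_a^b\langle\mathcal M\dot r_0,\dot\varphi\rangle\,\ud t$, and then absorb $-\int\langle\mathcal M\dot r_0,\dot\varphi\rangle\,\ud t$ by completing the square in the kinetic term: $\tfrac12\|\dot\varphi\|_\mathcal M^2-\langle\mathcal M\dot r_0,\dot\varphi\rangle=\tfrac12\|\dot\varphi-\dot r_0\|_\mathcal M^2-\tfrac12\|\dot r_0\|_\mathcal M^2$, recognizing $\dot\varphi-\dot r_0$ (up to the $\dot r_0$ already present) as essentially $\dot x$. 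One must check this bookkeeping is consistent across the three settings, in particular in the hyperbolic-parabolic case where the renormalization is performed cluster by cluster; but since in each case $r_0$ is an explicit smooth function and the corrections are linear in $\varphi$, the argument is routine once set up. A minor additional point is to ensure the competitor $\sigma$ can be taken of the form $r_0+\psi+\tilde x_0$ with $\psi\in H^1([a,b])$ and $\psi(a)=\varphi(a)$, $\psi(b)=\varphi(b)$, which is automatic since $r_0+\tilde x_0$ is smooth.
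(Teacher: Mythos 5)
Your first step is sound: on any compact interval $[a,b]$ the renormalized integrand differs from the true Lagrangian by $\tfrac12\|\dot r_0\|_\mathcal{M}^2+U(r_0)+\langle\mathcal{M}\ddot r_0,\varphi\rangle+\langle\mathcal{M}\dot r_0,\dot\varphi\rangle$, whose integral reduces to a constant plus the boundary term $\langle\mathcal{M}\dot r_0,\varphi\rangle\big|_a^b$, so fixed-endpoint, \emph{fixed-time} minimality of $x$ on compact intervals does follow from the minimality of $\varphi$; this is exactly what the paper uses to invoke Marchal's theorem. The gap is in your second step. The Maupertuis/AM--GM inequality gives, for any competitor $\sigma$ from $x(a)$ to $x(b)$ in time $T'$, the lower bound $\int L(\sigma)\,\ud t+hT'\geq\ell_{JM}(\sigma)$, the Jacobi--Maupertuis length of the arc traced by $\sigma$; and since $x$ has energy $h$ it realizes $\int_a^b L(x)\,\ud t+h(b-a)=\ell_{JM}(x|_{[a,b]})$. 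To conclude free-time minimality you would still need $\ell_{JM}(x|_{[a,b]})\leq\ell_{JM}(\sigma)$ for \emph{every} competitor arc, i.e.\ that $x|_{[a,b]}$ is a length-minimizing Jacobi--Maupertuis geodesic --- which is precisely the statement being proved, so the argument is circular at the decisive point. Fixed-time minimality plus energy $h$ only says that $b-a$ is a critical point of $T\mapsto\phi(T)+hT$, where $\phi(T)$ is the fixed-time value function (since $\phi'(T)=-h$); it does not make it a global minimum, and "a fixed-endpoints minimizer with energy $h$ also minimizes $\int L\,\ud t+hT$ over all transit times" is not a standard fact one can appeal to.

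The paper closes this gap with a genuinely global argument that your proposal does not contain: assuming a free-time competitor $\bar\sigma:[1,\bar T]\to\mathcal{X}$ beats $x|_{[1,T]}$ by $\varepsilon$ as in \eqref{eq_freetime_1}, one concatenates $\bar\sigma$ with the time-shifted tail $t\mapsto\tilde\gamma(t-\bar T+T)$ of (a compactly supported perturbation of) the minimizer, obtaining a new element $\bar\varphi$ of $\mathcal{D}_0^{1,2}(1,+\infty)$. The whole point is the bookkeeping of the renormalized action under this time shift: the boundary term $-\langle\mathcal{M}\dot r_0,\bar\varphi\rangle\big|_1^{+\infty}$ does not vanish, because $\bar\varphi(t)\to a(T-\bar T)$, and it produces exactly the contribution $2h(\bar T-T)$, while Proposition \ref{lem_freetime} disposes of the shifted reference terms $\int L_0^h(t)-L_0^h(t-\bar T+T)\,\ud t$. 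This converts the free-time gain $\varepsilon$ into a strict decrease of the renormalized action, contradicting \eqref{eq:minepsilon}. In short: free-time minimality here is extracted from minimality of $\mathcal{A}^{ren}$ over the \emph{entire half-line} via a concatenation-and-shift construction, not from compact-interval fixed-time minimality plus the Jacobi metric; you would need to add such a global argument (or an independent proof that the arc is a minimizing JM geodesic) for the proof to go through.
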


\begin{proof}
We consider a curve $\gamma:[1,+\infty)\rightarrow\mathcal{X}$ of the form $\gamma(t) = r_0(t) + \varphi(t) + \Tilde{x}^0$ such that $\varphi$ minimizes the renormalized Lagrangian action on $\mathcal{D}_0^{1,2}(1,+\infty)$. 

By contradiction, we suppose that there are $T$ and $\bar{T}$, $\varepsilon>0$ and there is some curve $\bar{\sigma}:[1,\bar{T}]\rightarrow\mathcal{X}$ with $\gamma(T)=\bar{\sigma}(\bar{T})$ such that
    \begin{equation}\label{eq_freetime_1}
        \int_{1}^{T} L(\gamma,\dot{\gamma})\ \ud t +hT> \int_{1}^{\bar{T}} L(\bar{\sigma},\dot{\bar{\sigma}})\ \ud t+h\bar T + \varepsilon.
    \end{equation}

By a density and continuity argument, we can then define a compactly supported function $\Tilde{\varphi}$ such that $\Tilde{\varphi}(t)=\varphi(t)$ on $[1,\hat{T}]$, where $\hat{T}\gg \max\{T,\bar{T}\}$, and $\Tilde{\varphi}$ is close enough to $\varphi$ in the $\mathcal{D}_0^{1,2}$-norm to have
\begin{displaymath}
    \mathcal{A}(\Tilde{\varphi}) \leq \mathcal{A}(\varphi) + \varepsilon,
\end{displaymath}
where $\mathcal{A}$ is the renormalized Lagrangian action. By the minimizing property of $\varphi$ we infer
\begin{equation}\label{eq:minepsilon}
    \mathcal{A}(\Tilde{\varphi}) \leq \mathcal{A}(\psi) + \varepsilon,\quad\forall \psi\in\mathcal D^{1,2}_0([1,+\infty)).
\end{equation}

Now, denoting $\Tilde{\gamma}(t) = r_0(t) + \Tilde{\varphi}(t) + \Tilde{x}^0$, we build a curve $\tilde{\sigma}:[1,+\infty)\rightarrow\mathcal{X}$ such that
    \begin{displaymath}
        \tilde{\sigma}(t) = \begin{cases}
            \bar{\sigma}(t),\quad t\in[1,\bar{T}]\\
            \tilde{\gamma}(t-\bar{T}+T),\quad t\in[\bar{T},+\infty)
        \end{cases}.
    \end{displaymath}
    Since we supposed that $\gamma(T)=\bar{\sigma}(\bar{T})$, we know for sure that $\tilde{\sigma}$ is continuous. Moreover we define $\bar\varphi(t)=\bar\sigma(t)-r_0(t)-\tilde x_0$, so that $\bar\varphi\in\mathcal D^{1,2}_0(1,+\infty)$ and, by its definition, we have
    \begin{equation}\label{eq:barphi}
    	\bar\varphi(t)\equiv r_0(t-\bar T+T)-r_0(t)=a(T-\bar T)+o(1), \quad\forall t\gg\max\{T,\bar T\},
	\end{equation}
as $\Tilde{\varphi}$ is compactly supported. We notice that we can write
\begin{displaymath}
    \mathcal{A}(\tilde{\varphi}) = \int_{1}^{+\infty} L(\Tilde{\gamma},\dot{\Tilde{\gamma}}) - L_0(t)\ \ud t,
\end{displaymath}
which easily follows from the fact that also $L(\Tilde{\gamma},\dot{\tilde{\gamma}}) - L_0(t)\in L^1[1,+\infty)$ and furthermore
\begin{displaymath}
    \int_{1}^{+\infty} -\langle \mathcal{M}\ddot{r_0},\tilde\varphi\rangle\ \ud t = -\langle \mathcal{M}\dot{r_0},\tilde\varphi\rangle\bigg|_1^{+\infty} + \int_{1}^{+\infty} \langle \mathcal{M}\dot{r_0},\dot{\tilde\varphi}\rangle\ \ud t = \int_{1}^{+\infty} \langle \mathcal{M}\dot{r_0},\dot{\tilde\varphi}\rangle\ \ud t.
\end{displaymath}
On the other hand, from \eqref{eq:barphi}, using $\dot r_0\simeq t^{-1/3}$, it follows  that 
\begin{displaymath}
\begin{split}
    \int_{1}^{+\infty} -\langle \mathcal{M}\ddot{r_0},\bar\varphi\rangle\ \ud t = -\langle \mathcal{M}\dot{r_0},\bar\varphi\rangle\bigg|_1^{+\infty} + \int_{1}^{+\infty} \langle \mathcal{M}\dot{r_0},\dot{\bar\varphi}\rangle\ \ud t= \langle \mathcal{M}a,a \rangle (\bar T-T)+\int_{1}^{+\infty} \langle \mathcal{M}\dot{r_0},\dot{\bar\varphi}\rangle\ \ud t \\= 2h (\bar T-T)+\int_{1}^{+\infty} \langle \mathcal{M}\dot{r_0},\dot{\bar\varphi}\rangle\ \ud t,
\end{split}
\end{displaymath}
where $h=H(r_0,\dot{r}_0)$ is the energy of $r_0$, which is positive equal to $\Vert a\Vert^2_\mathcal{M}/2$ in the hyperbolic and hyperbolic-parabolic case and zero in the completely parabolic case.
Consequently we have
\begin{displaymath}
    \mathcal{A}(\bar{\varphi}) = 2h (\bar T-T)+\int_{1}^{+\infty} L(\bar{\sigma},\dot{\bar{\sigma}}) - L_0(t)\ \ud t.
\end{displaymath}

Let us denote  $L^h=L-h$ and $L_0^h(t):=L(r_0(t))-h$. By \eqref{eq_freetime_1}, we can say that
\begin{equation}\label{eq_freetime_2}
 \begin{split}
   & \int_{1}^{T} L^h(\gamma,\dot{\gamma})\ \ud t + \int_{\bar{T}}^{+\infty} L^h(\tilde{\sigma},\dot{\tilde{\sigma}})-L^h_0(t-\bar{T}+T)\ \ud t \\
   & > \int_{1}^{\bar{T}} L^h(\bar{\sigma},\dot{\bar{\sigma}})\ \ud t  + \int_{\bar{T}}^{+\infty} L^h(\tilde{\sigma},\dot{\tilde{\sigma}})-L^h_0(t-\bar{T}+T)\ \ud t + \varepsilon+2h(\bar T-T).
\end{split}
\end{equation}
Working on left-hand side of equation \eqref{eq_freetime_2}, we obtain
\begin{displaymath}\begin{split}
    & \int_{1}^{T} L^h(\gamma,\dot{\gamma})\ \ud t + \int_{\bar{T}}^{+\infty} L^h(\tilde{\sigma},\dot{\tilde{\sigma}})-L^h_0(t-\bar{T}+T)\ \ud t \\
    & = \int_{1}^{T} L^h(\gamma,\dot{\gamma})\ \ud t + \int_{\bar{T}}^{+\infty} L^h(\Tilde{\gamma}(t-\bar{T}+T),\dot{\Tilde{\gamma}}(t-\bar{T}+T))-L^h_0(t-\bar{T}+T)\ \ud t\\
    & = \int_{1}^{T} L^h(\gamma,\dot{\gamma}) - L^h_0(t)\ \ud t + \int_{T}^{+\infty} L^h(\Tilde{\gamma},\dot{\Tilde{\gamma}}) - L^h_0(t)\ \ud t + \int_{1}^{T}L^h_0(t)\ \ud t\\
    & = \int_{1}^{+\infty} L^h(\Tilde{\gamma},\dot{\Tilde{\gamma}}) - L^h_0(t)\ \ud t + \int_{1}^{T}L^h_0(t)\ \ud t.
\end{split}\end{displaymath}
On the other hand, working on right-hand side of \eqref{eq_freetime_2}, we have
\begin{displaymath}\begin{split}
    & \int_{1}^{\bar{T}} L^h(\bar{\sigma},\dot{\bar{\sigma}})\ \ud t  + \int_{\bar{T}}^{+\infty} L^h(\tilde{\sigma},\dot{\tilde{\sigma}})-L^h_0(t-\bar{T}+T)\ \ud t +2h(\bar T-T)+ \varepsilon \\
    & = \int_{1}^{\bar{T}} L^h(\bar{\sigma},\dot{\bar{\sigma}}) - L^h_0(t)\ \ud t  + 
    \int_{\bar{T}}^{+\infty} L^h(\tilde{\sigma},\dot{\tilde{\sigma}})-L^h_0(t-\bar{T}+T) + L^h_0(t) - L^h_0(t)\ \ud t +
    \\
    &\hspace{9cm}  +\int_{1}^{\bar{T}} L^h_0(t)\ \ud t +2h(\bar T-T)+ \varepsilon \\
    & = \int_{1}^{\bar{T}} L^h(\bar{\sigma},\dot{\bar{\sigma}}) - L^h_0(t)\ \ud t  + \int_{\bar{T}}^{+\infty} L^h(\tilde{\sigma},\dot{\tilde{\sigma}})-L^h_0(t)\ \ud t+ \\
    &\hspace{4cm} + \int_{1}^{\bar{T}} L^h_0(t)\ \ud t  + \int_{\bar{T}}^{+\infty} L^h_0(t) - L^h_0(t-\bar{T}+T)\ \ud t +2h(\bar T-T)+ \varepsilon \\
    & = \int_{1}^{+\infty} L^h(\tilde{\sigma},\dot{\tilde{\sigma}})-L^h_0(t)\ \ud t + \int_{1}^{\bar{T}} L^h_0(t)\ \ud t  + \int_{\bar{T}}^{+\infty} L^h_0(t) - L^h_0(t-\bar{T}+T)\ \ud t +2h(\bar T-T)+ \varepsilon.
\end{split}\end{displaymath}
It thus follows that
\begin{displaymath}
\begin{split}
    & \int_{1}^{+\infty} L^h(\Tilde{\gamma},\dot{\Tilde{\gamma}}) - L^h_0(t)\ \ud t \\
    &> \int_{1}^{+\infty} L^h(\tilde{\sigma},\dot{\tilde{\sigma}})-L^h_0(t)\ \ud t + \int_{T}^{\bar{T}} L^h_0(t)\ \ud t  + \int_{\bar{T}}^{+\infty} L^h_0(t) - L^h_0(t-\bar{T}+T)\ \ud t + 2h(\bar T-T)+\varepsilon.
    \end{split}
\end{displaymath}
We recall the following property, which can be demonstrated as a simple exercise.
\begin{prop}\label{lem_freetime}
    Given a function $f\in L^1_{loc}(\mathcal{X})$ such that $f(t)\rightarrow0$ as $t\rightarrow\pm\infty$ and such that $f(t)-f(t-\tau)\in L^1$ for some $\tau\in\R$, then
    \begin{displaymath}
        \int_{-\infty}^{+\infty} f(t) - f(t-\tau)\ \ud t = 0.
    \end{displaymath}
\end{prop}
Since
\begin{displaymath}
    \int_{T}^{\bar{T}} L^h_0(t)\ \ud t  + \int_{\bar{T}}^{+\infty} L^h_0(t) - L^h_0(t-\bar{T}+T)\ \ud t = \int_{-\infty}^{+\infty} L^h_0(t)\mathcal{X}_{\{t>T\}} - L^h_0(t-\bar{T}+T)\mathcal{X}_{\{t>\bar{T}\}}\ \ud t,
\end{displaymath}
we can apply the Proposition \ref{lem_freetime} to the function $L^h_0(t)\mathcal{X}_{\{t>T\}}$. This eventually yields
\begin{displaymath}
 \int_{1}^{+\infty} L^h(\Tilde{\gamma},\dot{\Tilde{\gamma}}) - L^h_0(t)\ \ud t \\
    > \int_{1}^{+\infty} L^h(\tilde{\sigma},\dot{\tilde{\sigma}})-L^h_0(t)\ \ud t + 2h(\bar T-T)+\varepsilon, 
 \end{displaymath}   
    and finally
    \[
    \mathcal A(\Tilde\varphi)>\mathcal A(\bar\varphi)+\varepsilon,
    \]
in clear contradiction with \eqref{eq:minepsilon}.
\end{proof}

\section{Hamilton-Jacobi equations}\label{sec:HJ}

We now emphasize the dependence on the initial point $x^0$ and define the function
\begin{equation}
\begin{split}\label{eq:sol_HJ}
   v(x_0)&=\min_{\varphi \in \mathcal D^{1,2}_0(1,+\infty)} \left\{ \int_{1}^{+\infty} \frac{1}{2}\|\dot{\varphi}(t)\|_\mathcal{M}^2 \ \ud t\right.+
  \\&+ \left. \int_{1}^{+\infty} \ U(\varphi(t) + r_0(t) + x^0-r_0(1)) - U(r_0(t)) - \langle \ddot{r}_0(t),\varphi(t)\rangle_\mathcal{M}\ \ud t\right\}- \langle a, x^0\rangle_\mathcal{M}.
    \end{split}
    \end{equation}
    
    We claim that $u$ solves the Hamilton-Jacobi equation
    \begin{equation}\label{eq:HJ}
    H(x,\nabla v(x))=h
    \end{equation}
    in the viscosity sense. This can be easily seen by taking a point $x^0$ of differentiability, and formally differentiate \eqref{eq:sol_HJ} with respect to $x^0$, finding 
\[
    \nabla v(x^0)=-\mathcal{M}\dot x(1)
\]
where $x(t)=r_0(t)+\varphi(t)+x^0-r_0(1)$ and $\varphi$ is the minimizer of the renormalized action associated with $x^0$. Therefore $\mathcal{M}^{-1/2}\nabla v(x^0)=-\mathcal{M}^{1/2}\dot x(1)$ and we easily obtain \eqref{eq:HJ} from the expression of the Hamiltonian \eqref{eq:hamiltonian}. Making this argument fully rigorous goes beyond the scope of this paper. The interested reader can retrace step by step the method explained in \cite{MadernaVenturelli_HyperbolicMotions}, also taking into account that it is known that the singular set is contained in a locally countable union of smooth hypersurfaces of codimension at least one (cfr. \cite{MR2041617}).

Fixing $x^0$ and $T>0$, we now consider the boundary value problem 
\begin{displaymath}
    \begin{cases}
        \mathcal{M}\ddot{x} = \nabla U(x)\\
        x(1)=x^0\\
        \dot{x}(T)=\dot{r}_0(T)
    \end{cases}
\end{displaymath}
and introduce the associated value function
\begin{displaymath}
    u(T,x^0) = \min_{\gamma\in H^1([1,T]),\ \gamma(1)=x^0} \int_{1}^{T}  \frac{1}{2}\|\dot{\gamma}(t)\|_\mathcal{M}^2 + U(\gamma(t))\ \ud t - \langle\dot{r}_0(T),\gamma(T)\rangle_\mathcal{M}.
\end{displaymath}
It is a standard result of the theory of Hamilton-Jacobi equations (cfr. \cite{MR2041617}) that $u$ is a viscosity solution of
\begin{displaymath}
    -\frac{\partial u}{\partial T} = \frac{1}{2}\|\nabla u\|^2_{\mathcal{M}^{-1}} - U(x),
\end{displaymath}
where the gradient is taken with respect to the second variable.
\begin{rem}
    Notice that, compared with \cite{MR2041617}, we have reversed time orientation. 
\end{rem}

Now, we define
\begin{displaymath}
    v(T,x) = u(T,x) + \int_{1}^{T}\frac{1}{2}\|\dot{r}_0(t)\|_\mathcal{M}^2 - U(r_0(t))\ \ud t = u(T,x) + \int_{1}^{T} H(r_0(t),\dot{r}_0(t))\ \ud t
\end{displaymath} 
and observe that
\begin{displaymath}
    -\frac{\partial v}{\partial T} = \frac{1}{2}\|\nabla v\|_{\mathcal{M}^{-1}} - U(x) - H(r_0,\dot{r}_0).
\end{displaymath}
Assume that $v(T,x)$ converges uniformly to some $v(x)$ as $T\rightarrow+\infty$. Then, $v$ is a stationary viscosity solution to the stationary Hamilton-Jacobi equation
\begin{displaymath}
    \frac{1}{2}\|\nabla v\|_{\mathcal{M}^{-1}} - U(x) = \lim_{T\rightarrow+\infty} H(r_0,\dot{r}_0) = \frac{1}{2}\|a\|_\mathcal{M}^2.
\end{displaymath}
To relate the modified value function $v$ with the minimum of our renormalized action, let us write
\begin{displaymath}
    \gamma(t) = r_0(t) + \varphi(t) + \Tilde{x}^0,
\end{displaymath}
with $\Tilde{x}^0=x^0-r_0(1)$, and  compute
\begin{displaymath}
\begin{split}
    &\int_{1}^{T} \frac{1}{2}\|\dot{r}_0(t) + \dot{\varphi}(t)\|_\mathcal{M}^2 + U(r_0(t)+\varphi(t)+\Tilde{x}^0) + \frac{1}{2}\|\dot{r}_0(t)\|_\mathcal{M}^2 - U(r_0(t))\ \ud t - \langle\dot{r}_0(T),r_0(T)+\varphi(T)+x^0-r_0(1)\rangle_\mathcal{M}\\
    & = \int_{1}^{T} \frac{1}{2}\|\dot{\varphi}(t)\|_\mathcal{M}^2 + U(r_0(t)+\varphi(t)+\Tilde{x}^0) - U(r_0(t)) - \langle\ddot{r}_0(t),\varphi(t)\rangle_\mathcal{M}\ \ud t - \langle\dot{r}_0(T), x^0\rangle_\mathcal{M},
\end{split}\end{displaymath}
which follows from some integration by parts. Therefore, we have
\begin{displaymath}
    v(T,x^0) = \min_{\varphi\in H^1([1,T]),\ \varphi(1)=0} \mathcal{A}_{[1,T]}^{ren}(\varphi) - \langle\dot{r}_0(T), x^0\rangle_\mathcal{M},
\end{displaymath}
where we denoted 
\begin{displaymath}
    \mathcal{A}_{[1,T]}^{ren}(\varphi) = \int_{1}^{T} \frac{1}{2}\|\dot{\varphi}(t)\|_\mathcal{M}^2 + U(r_0(t)+\varphi(t)+\Tilde{x}^0) - U(r_0(t)) - \langle\ddot{r}_0(t),\varphi(t)\rangle_\mathcal{M}\ \ud t.
\end{displaymath}
Then, it becomes natural to let $T\to +\infty$ and define
\begin{displaymath}
    v(x^0) = \min_{\varphi\in\mathcal{D}_0^{1,2}(1,+\infty)}\int_{1}^{+\infty} \frac{1}{2}\|\dot{\varphi}(t)\|_\mathcal{M}^2 + U(r_0(t) + \varphi(t) + x^0 - r_0(1)) - U(r_0(t)) - \langle\ddot{r}_0(t),\varphi(t)\rangle_\mathcal{M}\ \ud t - \langle a, x^0\rangle_\mathcal{M}.
\end{displaymath}
We will prove in a forthcoming paper that 
\begin{displaymath}
    v(x) = \lim_{T\rightarrow+\infty} v(T,x)
\end{displaymath}
uniformly on compact sets of $\R^{dN}$ (actually, in the Hölder norms), so that $v$ solves
\begin{displaymath}
    \frac{1}{2}\|\nabla v\|_{\mathcal{M}^{-1}} - U(x) = \frac{1}{2}\|a\|_\mathcal{M}^2
\end{displaymath}
in the viscosity sense. This justifies once again our choice for the renormalized action functional. 

It is worthwhile noticing that the uniqueness result in \cite{MadernaVenturelli_inpreparation} ensures that, in the hyperbolic case, our value function $v$  is indeed the Busemann function. Moreover, it may be interesting that the linear correction in \eqref{eq:sol_HJ} is itself the Busemann function of the free particle.

%\bibliographystyle{acm}

%\printbibliography[
%heading=bibintoc,
%title={References}
%] %Prints the entire bibliography with the title "Whole bibliography"

\bibliographystyle{siam}
%\bibliography{PolimeniTerracini_ExpansiveSolutions.bib}

\end{document}